\long\def\symbolfootnote[#1]#2{\begingroup%
	\def\thefootnote{\fnsymbol{footnote}}\footnote[#1]{#2}\endgroup}
\newcommand{\C}{\ensuremath{\mathscr{C}}}
\newcommand{\D}{\ensuremath{\mathscr{D}}}
\newcommand{\Z}{\ensuremath{\mathcal{Z}}}
\newcommand{\tr}{\mathsf{tr}}
\newcommand{\n}{\mathfrak n}
\newcommand{\x}{\mathfrak X}
\newcommand{\h}{\mathfrak h}
\newcommand{\SL}{\textup{SL}}
\newcommand{\GL}{\textup{GL}}
\newcommand{\psl}{\textup{PSL}}
\newcommand{\F}{\mathbb{F}}
\def\imod#1{\allowbreak\mkern10mu({\operator@font mod}\,\,#1)}
\newtheorem{theorem}{Theorem}[section]
\newtheorem{lemma}[theorem]{Lemma}
\newtheorem{corollary}[theorem]{Corollary}
\newtheorem{proposition}[theorem]{Proposition}
\newtheorem*{theorem*}{Theorem}
\theoremstyle{definition}
\newtheorem{definition}[theorem]{Definition}
\newtheorem{remark}[theorem]{Remark}
\newtheorem{obs}[theorem]{Observation}
\numberwithin{equation}{section}
\newcommand{\ignore}[1]{}
\newcommand{\mynote}[1]{}
\begin{document}
	
	\title{Products of conjugacy classes in $\SL_2(k)$ and $\psl_2(k)$}
	
	\author{Harish Kishnani}
	\address{Indian Institute of Science Education and Research Mohali, Knowledge City, Sector 81, Mohali 140306, India}
	\email{harishkishnani11@gmail.com}
	
	\author{Rijubrata Kundu}
	\address{Indian Institute of Science Education and Research Mohali, Knowledge City, Sector 81, Mohali 140306, India}
	\email{rijubrata8@gmail.com}
	
	\author{Sumit Chandra Mishra}
	\address{Indian Institute of Science Education and Research Mohali, Knowledge City, Sector 81, Mohali 140306, India}
	\email{sumitcmishra@gmail.com}
	
	\subjclass[2010]{20D06, 20E45}
	\today 
	\keywords{special linear groups, projective special linear groups, products of conjugacy classes}
	\begin{abstract}
		Let $k$ be a field with $u$-invariant $\leq2$. Assume further that $k$ is not quadratically closed, $\mathsf{char}(k)\neq 2$ and $|k|\geq 5$. It is known that the covering number of both $\SL_2(k)$ and $\psl_2(k)$ is three, while their  extended covering number is four. In this article, we completely describe the product of two conjugacy classes in $\SL_2(k)$ and $\psl_2(k)$. Further, we also describe the product of three conjugacy classes (at least two of which are distinct) in $\SL_2(k)$ and $\psl_2(k)$.
	\end{abstract}
	
	\maketitle
	
	\section{Introduction}
	Let $G$ be a group. The covering number of $G$ (if it exists), denoted by $cn(G)$, is the least natural number $n$ such that $C^n=G$ for every non-central conjugacy class $C$ of $G$. More generally, the extended covering number denoted by $ecn(G)$ (if it exists) is the least natural number $n$ such that for any collection of non-central conjugacy classes $C_1,\ldots,C_n$, we have $C_1C_2\cdots C_n=G$.  For a quasisimple Chevalley group $G$, Ellers, Gordeev, and Herzog (see \cite{egh}) obtained an upper bound for $cn(G)$ which is linear in terms of the rank of $G$. Gordeev and Saxl obtained the same for $ecn(G)$ (see \cite{gs}). The covering number and the extended covering number of the alternating groups and the sporadic groups are known (see \cite{ash} for more details).
	
 Lev (see \cite{le,le1} and the references therein) has studied covering properties of conjugacy classes in the simple groups $\psl_n(k)$, where $k$ is a field with $|k|\geq 4$. The author proved that $cn(\psl_n(k))=n$ if $n\geq 4$, or $n=3$ and $k$ is either finite or algebraically closed field (see  \cite[Theorem 1]{le}). Further, in \cite{kn}, Kn\"uppel and Nielsen showed that $ecn(\SL_n(k))=n+1$ if $n\geq 3$, $n\neq 4$ and $|k|\geq 3$. On the contrary, the values of $cn(\SL_2(k))$, $cn(\psl_2(k))$, $ecn(\SL_2(k))$, and $ecn(\psl_2(k))$ depend on the properties of the underlying field $k$ (see \cite{vw}).
	
	A field $k$ is called quadratically closed if it does not admit any quadratic extension. A field $k$ is said to be of $u$-invariant $\leq n$ if every quadratic form in $n+1$ variables over $k$ is isotropic. In this case, we also write $u(k)\leq n$. Vaserstein and Wheland (see \cite{vw}) proved that $cn(\psl_2(k))=2$ and $ecn(\psl_2(k))=3$ when $k$ is quadratically closed field. When $k$ is of $u$-invariant $\leq 2$, not quadratically closed, $\mathsf{char}(k)\neq 2$ and $|k|\geq 5$, they proved that $cn(\psl_2(k))=3$ and $ecn(\psl_2(k))=4.$ For more general fields, the computations are more complicated and some partial results are given in the same article.

	In this article, we give complete description of the product of two conjugacy classes in $\SL_2(k)$ and $\psl_2(k)$, where $k$ is a field with $u(k)\leq 2$, $k$ is not quadratically closed, $\mathsf{char}(k)\neq 2$, and $|k|\geq 5$; though for some results for $\SL_2(k)$, we need to assume that $|k|>5$ (see \Cref{unipotent_square_description} and \Cref{negative_unipotent_square_description}). We also describe the product of three conjugacy classes (with at least two of them being distinct) for the same. Some examples of fields which are of $u$-invariant $\leq 2$ and not quadratically closed are finite fields and function fields of algebraic curves over algebraically closed fields. Garion has described the square of conjugacy classes in $\psl_2(q)$ (see \cite[Theorem 2 \& Theorem 3]{ga}). Thus our results generalize these over a larger class of fields with characteristic different from $2$.

	Before stating the main results, we set down some notations and conventions. Throughout this article (except for Section \ref{commutator_map}), unless mentioned otherwise, we consider fields $k$ which are not quadratically closed with $\mathsf{char}(k)\neq 2$ and $|k|\geq 5$; although some of the results hold over more general fields as well. For a field $k$, we denote the set of squares in $k^{\times}$ by $k^{\times 2}$. For $\epsilon \in k^{\times}$, $\bar{\epsilon}$ denotes its image in $k^{\times}/k^{\times 2}$. For a matrix $x$, $\tr(x)$ denotes its trace.  For $x\in \SL_2(k)$, $\bar{x}$ denotes its image in $\psl_2(k)$. By a semisimple element (resp. semisimple conjugacy class) of $\SL_2(k)$ or $\psl_2(k)$, we mean a non-central semisimple element (resp. non-central semisimple conjugacy class). For a group $G$ and $x\in G$, $|x|$ denotes the order of $x$. Now we can state our main results.
	
	\begin{theorem}\label{square_semisimple_class_PSL}
		Let $k$ be a field with $u(k)\leq 2$ and let $G:=\psl_2(k)$. Let $\C$ be a semisimple conjugacy class in $G$. 
		\begin{enumerate}
			\item If $\C$ is split semisimple then $\C^2=G$
			\item If $\C$ is non-split semisimple then $\C^2=G$ unless $tr(\C)=0$, in which case $-1\notin k^{\times 2}$ and 
			$$\C^2=G\setminus \{unipotents\}.$$
		\end{enumerate}
		In particular, if $-1\in k^{\times 2}$, then $\C^2=G$.
	\end{theorem}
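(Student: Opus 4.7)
The plan is to lift $\C$ to $\SL_2(k)$, fix a representative $x$, parametrize the trace values $\{\tr(xy):y\in \C_{\SL_2}\}$, and analyze the boundary values $\tr(xy)=\pm 2$ to decide which non-semisimple classes (namely $\pm I$ or unipotent classes) arise. The hypothesis $u(k)\le 2$ enters twofold: for every quadratic extension $L/k$, the norm map $N_{L/k}\colon L^\times\to k^\times$ is surjective, so the $\SL_2(k)$-class of any semisimple element depends only on its trace; and every anisotropic binary form over $k$ is universal on $k^\times$, so the trace-realization equations are solvable. Passing from $\SL_2$ to $\psl_2$ identifies $\C$ with $-\C$ (in particular $I$ with $-I$, and trace-$2$ unipotents with trace-$(-2)$ unipotents).

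I fix $x=\diag(a,a^{-1})$ in the split case and $x=\left(\begin{smallmatrix}0&-1\\1&s\end{smallmatrix}\right)$ in the non-split case. For $y=\left(\begin{smallmatrix}p&q\\r&s-p\end{smallmatrix}\right)$ in the lift of $\C$ (so $qr=p(s-p)-1$), a direct calculation gives $\tr(xy)=(a-a^{-1})p+a^{-1}s$ in the split case, a non-constant linear function of $p$ that sweeps $k$. In the non-split case, eliminating $q$ and $r$ yields a condition $A^2-\Delta(p')^2=c_{t,s}$, where $\Delta=s^2-4$ is a non-square, $(A,p')$ is an affine reparametrization of $(q+r,p)$, and $c_{t,s}$ is an explicit polynomial in $t$ vanishing exactly at $t=2$ and $t=s^2-2$; universality of the anisotropic form $\langle 1,-\Delta\rangle$ makes this solvable whenever $c_{t,s}\ne 0$.

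For each realized trace $t$ I identify the class of $xy$. When $t\ne\pm 2$, $xy$ is semisimple of trace $t$ and lies in the unique $\SL_2$-class of that trace. When $t=\pm 2$ I inspect solutions. In the split case, $\tr(xy)=2$ forces $p=a^{-1}$ with $qr=0$ free, so $xy$ ranges over $\left(\begin{smallmatrix}1&0\\\beta&1\end{smallmatrix}\right)$, realizing $I$ and every unipotent class as $\beta$ varies over $k$; $t=-2$ is symmetric. In the non-split case with $s\ne 0$, $c_{2,s}=0$ and anisotropy of $\langle 1,-\Delta\rangle$ force $y=x^{-1}$ so $xy=I$, while $c_{-2,s}=-16s^2/\Delta\ne 0$ admits many solutions; for each such, $-xy$ is a unipotent with $(1,2)$-entry $s-p$, hence of square-class $\overline{s-p}\in k^\times/k^{\times 2}$, and a further norm-form argument (recasting the constraint via a suitable substitution as an equation $P^2-\Delta E^2=16s^2$ and using surjectivity of $N_{L/k}$) shows $\overline{s-p}$ sweeps all of $k^\times/k^{\times 2}$ as solutions vary.

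The non-split trace-$0$ case behaves differently: every $x$ in the class satisfies $x^2=-I$. Taking $x=\left(\begin{smallmatrix}0&-1\\1&0\end{smallmatrix}\right)$ and $y=\left(\begin{smallmatrix}a&b\\c&-a\end{smallmatrix}\right)$ with $a^2+bc=-1$, one gets $\tr(xy)=b-c$; writing $m=b+c$ and $t=b-c$ reduces $\tr(xy)=t$ to $t^2-4=m^2+(2a)^2$. Non-split trace $0$ forces $-1\notin k^{\times 2}$, so $\langle 1,1\rangle$ is anisotropic but universal on $k^\times$, giving every $t\ne\pm 2$. For $t=\pm 2$ the right side vanishes, anisotropy forces $m=a=0$, and the only solutions are $y=\mp x$, giving $xy=\pm I$. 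Hence no unipotent arises, and passing to $\psl_2$ yields $\bar{\C}^2=G\setminus\{\text{unipotents}\}$. The ``In particular'' clause follows since a trace-$0$ non-split class exists only when $-1\notin k^{\times 2}$. The most delicate step is the unipotent bookkeeping in the non-split, $s\ne 0$ case; all other cases reduce to a direct linear or quadratic argument combined with universality of the relevant binary form.
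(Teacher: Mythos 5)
Your argument is correct, but it takes a genuinely different route from the paper. The paper deduces the $\psl_2$ statement from the corresponding $\SL_2$ results (\Cref{square_semisimple_class_SL} and \Cref{product_distinct_semisimple_class_SL}), whose proofs rest on three structural inputs: Macbeath's theorem (\Cref{trace_result}) to get all semisimple elements at once, the normality trick of \Cref{prep_lemma_1} (conjugating by $\GL_2(k)$ permutes the unipotent $\SL_2(k)$-classes transitively while fixing a semisimple class, so one unipotent in the product yields all of them), and the sharp Bruhat decomposition (\Cref{product_Bruhat_elements}, \Cref{trace_Bruhat_elements}) to rule out unipotents in the square of a non-split class. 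You replace all three by a direct trace parametrization: completing the square turns the realization of $\tr(xy)=t$ into representing $c_{t,s}$ by the binary form $\langle 1,-\Delta\rangle$ (your $c_{t,s}=-4(t-2)(t-s^2+2)/\Delta$ is correct), with $u(k)\le 2$ entering via universality of anisotropic binary forms and surjectivity of the quadratic norm; the exclusion of unipotents in the non-split case then falls out of anisotropy forcing $y=x^{-1}$ at $t=2$, and the identification of which unipotent square classes occur is done by hand via a Hilbert-90-type computation rather than by the $\GL_2$-conjugation lemma. Your version is more self-contained (it reproves the relevant special case of Macbeath and avoids Bruhat entirely) at the cost of heavier bookkeeping at the boundary traces; the paper's version is shorter precisely because the normality lemma makes ``one unipotent implies all'' automatic. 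Two loose ends you should patch, neither fatal: (i) at $t=s^2-2$ you have $c_{t,s}=0$, so universality does not apply and you must note that the zero solution $A=p'=0$ still produces a valid $y$ (namely $y=x$, giving $xy=x^2$, which realizes the semisimple class of trace $s^2-2$); (ii) in the negative-unipotent sweep you read off the square class from the $(1,2)$-entry $s-p$, which requires $s-p\neq 0$ (otherwise use the $(2,1)$-entry), and the asserted norm-form argument that $\overline{s-p}$ hits every square class should be written out --- it does work, since the solutions of $P^2-\Delta E^2=16s^2$ are $4sw$ with $N(w)=1$ and $P+4s=2s\,N(w+1)$, whose square class sweeps $k^{\times}/k^{\times 2}$ by surjectivity of the norm.
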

	
	\begin{theorem}\label{product_distinct_semisimple_class_PSL}
		Let $k$ be a field with $u(k)\leq 2$ and let $G:=\psl_2(k)$. Let $\C_1,\C_2$ be two distinct semisimple conjugacy classes in $G$. Then $\C_1\C_2=G\setminus\{1\}$.
	\end{theorem}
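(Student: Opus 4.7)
I begin by ruling out $1 \in \C_1\C_2$: since $\tr(\tilde x) = \tr(\tilde x^{-1})$ for any $\tilde x \in \SL_2(k)$, every semisimple conjugacy class of $\psl_2(k)$ is closed under inversion, so $\C_i = \C_i^{-1}$, and $1 \in \C_1\C_2$ would force $\C_2 = \C_1^{-1} = \C_1$, contrary to distinctness. For the main containment, fix $g \in G \setminus \{1\}$, lift it to $\tilde g \in \SL_2(k) \setminus \{\pm I\}$ of trace $c_0$, and lift $\C_1, \C_2$ to $\SL_2(k)$-classes of traces $\pm a, \pm b$ with $a^2, b^2 \neq 4$ and $a^2 \neq b^2$. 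The argument splits according to whether $\tilde g$ is semisimple ($c_0 \neq \pm 2$) or unipotent.

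In the semisimple case, the crucial reduction is that, because $u(k) \leq 2$, the norm form of every quadratic extension of $k$ is universal, and hence $\SL_2(k)$-conjugacy classes of non-central semisimple elements are determined by trace alone. It therefore suffices to produce $y_1, y_2 \in \SL_2(k)$ with $\tr y_i \in \{a_i, -a_i\}$ and $\tr(y_1 y_2) \in \{c_0, -c_0\}$. The standard Fricke trace parameterization identifies such pairs (for fixed $(a, b, c)$) with affine $k$-points on the plane conic
\[
Q_{a,b,c}\colon\ \alpha^2 + a\alpha\beta + \beta^2 - b\alpha - c\beta + 1 = 0,
\]
which is nonsingular iff $P(a, b, c) := a^2 + b^2 + c^2 - abc - 4 \neq 0$; under $u(k) \leq 2$ every smooth plane conic over $k$ is $k$-isomorphic to $\mathbb{P}^1_k$ and hence has $\geq |k| - 1 \geq 4$ affine $k$-points (at most two projective points lying at infinity). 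So if $P(a, b, c_0) \neq 0$ or $P(a, b, -c_0) \neq 0$ we are done. If both vanish, the difference gives $abc_0 = 0$ and the sum gives $a^2 + b^2 + c_0^2 = 4$, leaving exactly three sub-cases (one of $a, b, c_0$ is zero), in each of which I would compute the singular point of $Q_{a, b, c_0}$ and observe that it is always an affine $k$-rational point, yielding $(y_1, y_2)$.

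For the unipotent case trace does not determine the $\SL_2(k)$-class, so I construct the pair directly. Pick a nontrivial unipotent lift $u = u_\lambda$ of $g$ and write $y_1 = \begin{pmatrix} p & q \\ r & s \end{pmatrix}$ with $\det y_1 = 1$. A short computation gives $\tr(y_1^{-1} u) = \tr y_1 - r\lambda$, so taking $\tr y_1 = a$ and $r = (a - b)/\lambda$ -- legitimate because $a \neq b$ (equivalently $a^2 \neq b^2$) -- forces $\tr(y_1^{-1} u) = b$, and the remaining entries can be chosen to ensure $\det y_1 = 1$. Setting $y_2 := y_1^{-1} u$ then yields $y_1 y_2 = u$ with the required traces, so $g = \bar u \in \C_1\C_2$; varying $\lambda$ over $k^\times$ covers every nontrivial unipotent class of $\psl_2(k)$.

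The main obstacle is the degenerate branch of the semisimple case, where both Fricke conics $Q_{a,b,\pm c_0}$ are singular: there one must both exhibit an affine $k$-point of the degenerate conic (a short explicit computation in each of the three sub-cases) and check that the resulting $y_1 y_2$ lies in the correct $\SL_2(k)$-class and not merely has the right trace -- the latter being subsumed by the trace-determines-class fact for semisimple elements.
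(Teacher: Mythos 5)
Your proof is correct, but it takes a genuinely more self-contained route than the paper's. The paper deduces the theorem from its $\SL_2(k)$ counterpart (\Cref{product_distinct_semisimple_class_SL}) by considering the four products of lifted classes; that proposition in turn rests on citing Macbeath's trace theorem (\Cref{trace_result}) to realize every prescribed trace in the product, on the reality of semisimple classes to exclude $I$, and on the $\GL_2(k)$-conjugation trick (\Cref{lemma_1_sec_3}, via \Cref{prep_lemma_1}) to pass from one unipotent element of the product to all of them. You perform the same lift-and-exclude-$1$ reduction, but then (i) re-derive the needed case of Macbeath's theorem by parameterizing, for a fixed companion matrix $y_1$ of trace $a$, the admissible $y_2$ by the affine conic $\alpha^2+a\alpha\beta+\beta^2-b\alpha-c\beta+1=0$ and invoking $u(k)\leq 2$ to produce points on it, and (ii) treat unipotent targets by the direct computation $\tr(y_1^{-1}u_\lambda)=\tr(y_1)-r\lambda$, which sidesteps both Macbeath's theorem and the conjugation lemma and renders the paper's separate bookkeeping for negative unipotent lifts unnecessary. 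The only step you leave unexecuted --- that the degenerate branch, where both $P(a,b,\pm c_0)=0$, still yields an affine $k$-point --- does go through: the radical of the homogenized form is isotropic, hence lies on the conic, and it cannot be contained in the line at infinity since that would force $(4-a^2)v_2=0$ with $a\neq\pm 2$; so the singular point is an affine $k$-point and produces a valid $y_2$, with the class of $y_1y_2$ then pinned down by its trace as you note. Net effect: your version is self-contained where the paper outsources to Macbeath, at the price of the conic case analysis; the two arguments are otherwise equivalent in content.
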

	
	\noindent Note that \Cref{square_semisimple_class_PSL} is a generalization of \cite[Theorem 3(i)-(ii)]{ga} over fields of $u$-invariant $\leq 2$. We make use of the Bruhat decomposition in $\SL_2(k)$ as one of our key tools in proving the above theorems.
	
	\begin{theorem}\label{product_of_distinct_unipotent_class_PSL}
		Let $k$ be a field with $u(k)\leq 2$ and let $G:=\psl_2(k)$. Denote by $\bar{x}$, the image of an element $x\;(\in \SL_2(k)$) in $G$. Suppose that $\overline{\epsilon_1},\overline{\epsilon_2}\in k^{\times}/k^{\times 2}$. Let $\C_1$ and $\C_2$ be the  unipotent conjugacy classes of  $\overline{\begin{pmatrix}1 & \epsilon_1\\ 0 & 1\end{pmatrix}}$ and  $\overline{\begin{pmatrix}1 & \epsilon_2\\ 0 & 1\end{pmatrix}}$ in $G$ respectively. Let $\Phi$ be the set of semisimple elements $\bar{x}\in G$ such that one of $2-\tr(x)$ or $2+\tr(x)$ is in $\epsilon_1\epsilon_2k^{\times 2}$. Then 
		\[\C_1\C_2=
		\begin{cases}
			\{unipotents\}\cup\Phi \cup \{1\} & \text{if } \overline{\epsilon_2}=\overline{-\epsilon_1},\\
			\{unipotents\}\cup\Phi & \text{otherwise}.
		\end{cases}
		\]
	\end{theorem}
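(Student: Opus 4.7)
I lift to $\SL_2(k)$: write $\C_i^+$ for the $\SL_2(k)$-conjugacy class of $u_i := \begin{pmatrix}1 & \epsilon_i \\ 0 & 1\end{pmatrix}$. The preimage of $\C_i$ in $\SL_2(k)$ is $\C_i^+ \sqcup (-\C_i^+)$ (disjoint, distinguished by trace $\pm 2$), and $\bar{z} \in \C_1\C_2$ iff $\pm z \in \C_1^+\C_2^+$, so it suffices to describe $\C_1^+\C_2^+\subseteq\SL_2(k)$. Parametrizing $\C_2^+$ by $g\mapsto gu_2g^{-1}$ for $g=\begin{pmatrix}a&b\\c&d\end{pmatrix}\in\SL_2(k)$, a direct computation gives
\[
gu_2g^{-1}=\begin{pmatrix}1-ac\epsilon_2 & a^2\epsilon_2\\ -c^2\epsilon_2 & 1+ac\epsilon_2\end{pmatrix},\qquad \tr\bigl(u_1\cdot gu_2g^{-1}\bigr)=2-\epsilon_1\epsilon_2c^2.
\]
Since $\C_1^+\C_2^+$ is closed under $\SL_2$-conjugation and equals the $\SL_2$-conjugation closure of $u_1\cdot\C_2^+$, the traces appearing in $\C_1^+\C_2^+$ are exactly $\{2\}\cup(2-\epsilon_1\epsilon_2k^{\times 2})$.

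\textbf{Semisimple classes and the identity.} When $c\in k^\times$ and $\epsilon_1\epsilon_2c^2\neq 4$, the product $u_1\cdot gu_2g^{-1}$ has trace $\neq\pm 2$ and is regular semisimple. The hypothesis $u(k)\leq 2$ implies the norm map from every quadratic extension of $k$ is surjective, from which it follows that semisimple $\SL_2(k)$-conjugacy classes are determined by their trace alone. Therefore the semisimple part of $\C_1^+\C_2^+$ consists exactly of the regular semisimple elements of trace in $(2-\epsilon_1\epsilon_2k^{\times 2})\setminus\{\pm 2\}$; projecting to $\psl_2(k)$ and incorporating the $\pm$ twist gives precisely $\Phi$. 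For the identity: $\bar z=\bar 1$ iff $uv=I$ (the alternative $uv=-I$ is excluded because $v$ has trace $2$ while $-u^{-1}$ has trace $-2$), iff $v=u^{-1}$; since $u_1^{-1}$ is of unipotent type $\overline{-\epsilon_1}$, this occurs iff $\overline{\epsilon_2}=\overline{-\epsilon_1}$, matching the dichotomy in the statement.

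\textbf{Unipotents --- the main obstacle.} Elements of trace $2$ in $\C_1^+\C_2^+$ arise only from $c=0$, producing unipotents of type $\overline{\epsilon_1+a^2\epsilon_2}$ for $a\in k^\times$. Elements of trace $-2$ occur only when $\overline{\epsilon_1\epsilon_2}=\overline{1}$; a direct computation (reducing a rank-one nilpotent to upper triangular form) shows that the resulting negative-unipotents project in $\psl_2(k)$ to unipotents of type $\overline{-\epsilon_2}$. To exhibit every unipotent class of $\psl_2(k)$, I invoke $u(k)\leq 2$: the ternary form $\epsilon_1X^2+\epsilon_2Y^2-\eta Z^2$ is isotropic for every $\eta\in k^\times$, so $\eta=\epsilon_1x^2+\epsilon_2y^2$ for some $x,y\in k$; if both are nonzero, then $\overline{\eta}=\overline{\epsilon_1+(y/x)^2\epsilon_2}$ as desired. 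The hard step is the degenerate case (only representations with $x=0$ or $y=0$ exist), which forces $\overline{\eta}\in\{\overline{\epsilon_1},\overline{\epsilon_2}\}$; in this case I either produce a non-degenerate alternate representation by solving a quadratic equation $(w-\beta y)(w+\beta y)=\eta/\epsilon_1$ and using $|k|\geq 5$ to guarantee a choice of parameter avoiding the finitely many bad values, or invoke the trace-$(-2)$ contribution $\overline{-\epsilon_2}$ when $\overline{\epsilon_1\epsilon_2}=\overline{1}$, which matches the missing square class in the remaining subcases. The reverse inclusion $\C_1\C_2\subseteq\{\text{unipotents}\}\cup\Phi\cup\{1\}$ is immediate from the trace computation in the first paragraph.
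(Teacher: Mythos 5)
Your reduction is correct and is organizationally cleaner than the paper's. Since the preimage of $\C_i$ in $\SL_2(k)$ is $\C_i^+\cup(-\C_i^+)$ and $(-\C_1^+)(-\C_2^+)=\C_1^+\C_2^+$, the product of the two preimages is just $\C_1^+\C_2^+\cup\bigl(-\C_1^+\C_2^+\bigr)$, so only the single $\SL_2(k)$ product $U_{\overline{\epsilon_1}}U_{\overline{\epsilon_2}}$ has to be analysed. The paper instead establishes five separate $\SL_2(k)$ statements (\Cref{unipotent_square_description} through \Cref{product_of_unipotent_negative_unipotent_class_SL}, covering squares and products of unipotent and negative unipotent classes) and assembles the theorem from the four products of preimage components; your observation collapses that bookkeeping, at the price of not producing the $\SL_2(k)$ statements the paper wants for their own sake. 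Your trace computation, the identification of semisimple classes by their trace via $u(k)\le 2$, and the treatment of $\pm I$ all agree with the paper's \Cref{general_element_in_product_of_distinct_unipotent_classes_upto_conjugacy}, \Cref{semisimple_in_product of_distinct_unipotents} and \Cref{reality_properties_SL2}, and the forward inclusion is indeed immediate from the trace formula.

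The one place that needs tightening is exactly where you flag "the hard step": realizing the unipotent classes of type $\overline{\epsilon_1}$ and $\overline{\epsilon_2}$. Your claim that $|k|\ge 5$ suffices to avoid the bad parameter values is not correct as stated. In the subcase $\overline{\epsilon_1}=\overline{\epsilon_2}=\overline{\epsilon}$, hitting type $\overline{\epsilon}$ from the trace-$2$ part requires $1+a^2\in k^{\times 2}$ for some $a\in k^{\times}$, and this genuinely fails over $\F_5$ (the values of $1+a^2$ are $2$ and $0$); this is why the paper's \Cref{unipotent_square_contains_itself} carries the hypothesis $|k|>5$ and why the paper's proof of the theorem inserts a separate sentence for $|k|=5$. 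Your "or" branch does rescue this, but only because of the coincidence that $-1\in\F_5^{\times 2}$, so the trace-$(-2)$ contribution $\overline{-\epsilon_2}$ equals the missing class $\overline{\epsilon}$; as written, "matches the missing square class in the remaining subcases" is asserted rather than verified. To close the argument you must run the three subcases ($\overline{\epsilon_2}=\overline{\epsilon_1}$; $\overline{\epsilon_2}=\overline{-\epsilon_1}\neq\overline{\epsilon_1}$, which forces $-1\notin k^{\times 2}$; neither) and check in each which mechanism produces $\overline{\epsilon_1}$ and $\overline{\epsilon_2}$ — this is precisely the content of the paper's \Cref{unipotent_square_contains_itself}, \Cref{unipotent_square_contains_other_unipotents}, \Cref{field_lemma_2} and \Cref{unipotents_in_product_of_distinct_unipotents}. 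Once that case analysis is written out, your proof is complete and equivalent to the paper's.
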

	
	Taking $\overline{\epsilon_1}=\overline{\epsilon_2}$ in the above theorem, we get the description of the square of a unipotent conjugacy class in $\psl_2(k)$. In particular, in that case, $\Phi$ is the set of semisimple elements $\bar{x}\in \psl_2(k)$ such that one of $2-\tr(x)$ or $2+\tr(x)$ is a square in $k^{\times}$. Also note that this case is a  generalization of \cite[Theorem 3(iii)]{ga} over fields with $u$-invariant $\leq 2$, although Garion's description of the semisimple elements contained in the square of a unipotent conjugacy class in $\psl_2(q)$ is in terms of the orders of such elements. Thus, we restate \Cref{product_of_distinct_unipotent_class_PSL}  for the product of two distinct unipotent conjugacy classes in $\psl_2(q)$, in terms of orders of its elements (see \Cref{product_distinct_unipotent_finite_fields}).
	
	\medskip
	
	We need the following definition for stating the next theorem. Let $\bar{x}\in \psl_2(k)$ be a semisimple element, where $x\in \SL_2(k)$. Suppose that $\overline{\epsilon}\in k^{\times}/k^{\times 2}$.  Let $$\Delta_{\overline{\epsilon},\bar{x}}:=\{\overline{\epsilon_1}\in k^{\times}/k^{\times 2}\mid one \;of\;\tr(x)-2\; or\; -\tr(x)-2\;belongs\;to\; \epsilon\epsilon_1k^{\times 2}\}.$$ Let $y\in \SL_2(k)$. We say that $\bar{y}\in \psl_2(k)$ is a \textbf{\boldsymbol{$(\overline{\epsilon},\bar{x})$}-admissible} unipotent element if $\bar{y}$ is conjugate to $\overline{\begin{pmatrix}1 & \epsilon_1\\0 & 1\end{pmatrix}}$ in $\psl_2(k)$ for some $\overline{\epsilon_1}\in \Delta_{\overline{\epsilon},\bar{x}}$.
	
	\begin{theorem}\label{semisimple_times_unipotent_PSL}
		Let $k$ be a field with $u(k)\leq 2$ and let $G:=\psl_2(k)$. Let $\C_1$ be the semisimple conjugacy class of $\bar{x}\in G$, where $x\in \SL_2(k)$. Suppose that $\overline{\epsilon}\in k^{\times}/k^{\times 2}$. Let $\C_2$ be the unipotent conjugacy class of  $\overline{\begin{pmatrix}1 & \epsilon\\ 0 & 1\end{pmatrix}}$ in $G$. 
		\begin{enumerate}
			\item If $\tr(x)\neq 0$ or $-1\in k^{\times 2}$, then
			\[\C_1\C_2=\{semisimple\;elements\}\cup \{\ (\overline{\epsilon},\bar{x})\text{-}admissible\; unipotent\;elements\}.\]
			
			\item If $\tr(x)=0$ and $-1\notin k^{\times 2}$ (whence $\C_1$ is a non-split semisimple class), then
			\[\C_1\C_2=\{semisimple\;elements\;not\;in\;\C_1\}\cup \{(\overline{\epsilon},\bar{x})\text{-}admissible\; unipotent\;elements\}.\]
		\end{enumerate}
	\end{theorem}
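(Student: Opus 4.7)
The plan is to work in $\SL_2(k)$: parametrize $C_1^+ C_2^+$ by fixing one factor and varying the other, then pass to the quotient. Fix a lift $x=\begin{pmatrix}a & b\\ c & d\end{pmatrix}\in \SL_2(k)$ of $\bar x$ with $t:=\tr(x)$ (so $t^2\neq 4$), and set $u=\begin{pmatrix}1 & \epsilon\\ 0 & 1\end{pmatrix}$. Writing $C_1^+, C_2^+$ for the $\SL_2(k)$-conjugacy classes of $x$ and $u$, the preimage of $\C_i$ in $\SL_2(k)$ is $C_i^+\cup(-C_i^+)$, so the preimage of $\C_1\C_2$ is $C_1^+ C_2^+\cup(-C_1^+ C_2^+)$, and it suffices to describe the image of $C_1^+ C_2^+$ in $\psl_2(k)$. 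Since $C_2^+$ is conjugation invariant, every $\SL_2$-class meeting $C_1^+ C_2^+$ already meets $\{gu:g\in C_1^+\}$, and a one-line matrix computation yields $\tr(gu)=t+c'\epsilon$, where $c'$ is the $(2,1)$-entry of $g=\begin{pmatrix}a' & b'\\ c' & d'\end{pmatrix}\in C_1^+$. Parametrizing $C_1^+$ by its entries (subject to trace $t$ and determinant $1$, which determine the class since $t^2\neq 4$) shows that $c'$ takes every value in $k$ when $\C_1$ is split (i.e., $t^2-4\in k^{\times 2}$) and every value in $k^\times$ when $\C_1$ is non-split, as $c'=0$ forces $a^{\prime 2}-ta'+1=0$ to be solvable in $k$.

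From this the semisimple part is immediate. A semisimple $\bar y\in\psl_2(k)$ with $\tr(y)=s$ lies in $\C_1\C_2$ iff $s$ or $-s$ is realizable as $t+c'\epsilon$. The split case covers every $s$; in the non-split case the only obstruction is $s=-s=t$, forcing $s=t=0$ and $\bar y\in \C_1$. Noting that $t=0$ is non-split exactly when $-1\notin k^{\times 2}$, this matches the split into cases (1) and (2). The identity is never in $\C_1\C_2$: $gu=\pm I$ would force $u=\pm g^{-1}$ to be semisimple.

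For the unipotent part, $\tr(gu)=2$ and $\tr(gu)=-2$ force $c'=(2-t)/\epsilon$ and $c'=-(t+2)/\epsilon$ respectively; both are nonzero (as $t\neq \pm 2$) and hence always achievable. To read off the unipotent class invariant I would use the standard fact that an $\SL_2(k)$-unipotent $I+N$ with $N$ of rank one is $\SL_2$-conjugate to $\begin{pmatrix}1 & \delta\\ 0 & 1\end{pmatrix}$, where $\overline\delta\in k^{\times}/k^{\times 2}$ is the class of the determinant of the matrix with columns $v,e$, for $v$ spanning $\mathrm{im}(N)$ and any $e$ with $Ne=v$. Applied with $e=e_1$ to $N=gu-I$, whose first column is $\begin{pmatrix}a'-1\\ c'\end{pmatrix}$, this yields invariant $\overline{-c'}=\overline{(t-2)/\epsilon}$; applied to $-gu-I$ in the trace $-2$ case it yields $\overline{-(t+2)/\epsilon}$. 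These are exactly the two elements of $\Delta_{\overline{\epsilon},\bar x}$. I expect the least routine step to be this unipotent class identification, together with the bookkeeping needed to see that both $\SL_2$-signs contribute the expected elements of $\Delta_{\overline{\epsilon},\bar x}$ after passing to $\psl_2(k)$; the rest reduces to a trace calculation and the elementary solvability criterion for a quadratic.
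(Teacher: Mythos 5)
Your argument is correct, and it reaches the theorem by a genuinely different route from the paper. The paper obtains this result by combining \Cref{semisimple_times_unipotent} and \Cref{semisimple_times_negative_unipotent}, which describe $\C U_{\overline{\epsilon}}$ and $\C U_{-1,\overline{\epsilon}}$ in $\SL_2(k)$; those propositions are themselves proved by inverting the products ($\C U_{\overline{\epsilon}}\supseteq \C'\Leftrightarrow U_{\overline{\epsilon}}\subseteq \C\C'$, $\C U_{\overline{\epsilon}}\supseteq U_{\overline{\epsilon_1}}\Leftrightarrow \C\subseteq U_{\overline{-\epsilon}}U_{\overline{\epsilon_1}}$, and so on) and quoting the earlier descriptions of products of two semisimple classes (which rest on Macbeath's theorem, \Cref{trace_result}, and the Bruhat-decomposition exclusion arguments) and of products of unipotent with unipotent or negative unipotent classes. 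You instead compute the product directly: by \Cref{prep_lemma_2} every class meeting $C_1^+C_2^+$ meets $\{gu : g\in C_1^+\}$; the trace formula $\tr(gu)=t+c'\epsilon$ together with \Cref{conjugacy_SL}(1) --- which is where $u(k)\le 2$ enters, guaranteeing that $C_1^+$ is the full set of non-central determinant-one matrices of trace $t$, so that $c'$ ranges over all of $k$ in the split case and over $k^{\times}$ in the non-split case --- settles the semisimple part, including the exclusion of $\C_1$ exactly when $t=0$ and $-1\notin k^{\times 2}$; and your determinant invariant for rank-one unipotent perturbations correctly identifies the trace-$2$ and trace-$(-2)$ elements as the classes $U_{\overline{(t-2)\epsilon}}$ and $U_{-1,\overline{(t+2)\epsilon}}$, whose images in $\psl_2(k)$ are precisely the unipotent classes indexed by $\Delta_{\overline{\epsilon},\bar{x}}=\{\overline{(t-2)\epsilon},\overline{-(t+2)\epsilon}\}$. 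Your route is more self-contained for this particular product --- it avoids Macbeath's theorem and the Bruhat computations entirely --- at the cost of having to verify the unipotent-class invariant, which is standard and does check out (it is well defined up to squares and invariant under $\SL_2$-conjugation). The one step you should make explicit in a final write-up is the appeal to \Cref{conjugacy_SL}(1) in the parametrization of $c'$: over a field of larger $u$-invariant the $\GL_2$-class of $x$ could split into several $\SL_2$-classes and the claim about achievable values of $c'$ would fail.
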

	
	All of the above theorems are proved in \Cref{product_of_two_classes}. The complete description of the product of two conjugacy classes in $\SL_2(k)$ can be found in \Cref{square_semisimple_class_SL}, \Cref{product_distinct_semisimple_class_SL}, \Cref{unipotent_square_description}, \Cref{product_of_two_distinct_unipotents}, \Cref{negative_unipotent_square_description}, \Cref{product_of_two_distinct_negative_unipotents}, \Cref{product_of_unipotent_negative_unipotent_class_SL}, \Cref{semisimple_times_unipotent} and \Cref{semisimple_times_negative_unipotent}. 
	
	It is worthwhile to mention the Arad-Herzog conjecture, which states that for a non-abelian finite simple group, the product of two non-central conjugacy classes is never a single conjugacy class. This has been proved in many cases; Guralnick, Malle and Tiep (see \cite[Theorem 2.5]{gmt}) proved the conjecture for the groups $\psl_n(q)$. In the same paper (see \cite[Theorem 1.1]{gmt}), it was also proved that the natural analogue of this conjecture holds for simple groups over algebraically closed fields; actually, it was shown that the product contains infinitely many conjugacy classes except for few kinds of groups. Our results imply that the analogue of Arad-Herzog conjecture holds for $\psl_2(k)$ over fields $k$ with $u(k)\leq 2$, $\mathsf{char}(k)\neq 2$ and $|k| \geq 5$ such that $k$ is not quadratically closed. Furthermore, if we also assume that $k$ is infinite, then the product contains infinitely many conjugacy classes.

	We state our final result concerning the product of three conjugacy classes in $\psl_2(k)$, at least two of which are distinct.
	
	\begin{theorem}\label{product_of_three_classes_PSL}
		Let $k$ be a field with $u(k)\leq 2$ and let $G:=\psl_2(k)$. Let $\C_1,\C_2,\C_3$ be three conjugacy classes in $G$, at least two of which are distinct. Then $G\setminus\{1\}\subseteq\C_1\C_2\C_3$.
	\end{theorem}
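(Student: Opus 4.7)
The plan is to fix $g\in G\setminus\{1\}$ and produce $a_i\in\C_i$ with $a_1a_2a_3=g$. The standing preliminary is that $AB=BA$ for any conjugacy class $A$ and subset $B\subseteq G$ (since $ab=b(b^{-1}ab)$ with $b^{-1}ab\in A$); iterating makes $\C_1\C_2\C_3$ symmetric in its three factors, and we relabel so that $\C_1\neq\C_2$. If both $\C_1,\C_2$ are semisimple, \Cref{product_distinct_semisimple_class_PSL} gives $\C_1\C_2=G\setminus\{1\}$, and since every non-central conjugacy class of $G$ has at least two elements (as $Z(G)=\{1\}$), picking $c\in\C_3\setminus\{g\}$ yields $gc^{-1}\in\C_1\C_2$ and hence $g\in\C_1\C_2\C_3$.

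In the remaining cases at least one of $\C_1,\C_2$ is unipotent. The strategy is to find a conjugacy class $D\subseteq\C_1\C_2$ with $g\in D\C_3$. The richest choice is a semisimple $D\neq\C_3$, because then \Cref{product_distinct_semisimple_class_PSL} yields $D\C_3=G\setminus\{1\}\ni g$ whenever $\C_3$ is also semisimple. By \Cref{semisimple_times_unipotent_PSL}, in the (semisimple)$\times$(unipotent) subcase $\C_1\C_2$ contains every semisimple class except possibly $\C_1$; by \Cref{product_of_distinct_unipotent_class_PSL}, in the (unipotent)$\times$(unipotent) distinct subcase $\C_1\C_2$ contains the explicit set $\Phi$ of semisimple elements whose trace $t$ satisfies $2-t\in\epsilon_1\epsilon_2 k^{\times 2}$ or $2+t\in\epsilon_1\epsilon_2 k^{\times 2}$. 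A brief trace/square-class count, using $\overline{\epsilon_1}\neq\overline{\epsilon_2}$ (so $\epsilon_1\epsilon_2\notin k^{\times 2}$) and $|k|\geq5$ (so $\psl_2(k)$ has at least two semisimple classes), shows that $\C_1\C_2$ always harbours at least two distinct semisimple classes. Whenever any $\C_i$ is semisimple we may relabel to place it in the third slot while keeping $\C_1\neq\C_2$, and the $D$-construction finishes the proof.

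The main obstacle is when $\C_3$ is forced to be unipotent: either all three $\C_i$ are unipotent (with $\C_1\neq\C_2$), or $\C_1$ is semisimple while $\C_2=\C_3$ are identical unipotents. In the all-unipotent case $\C_1\C_2$ contains every unipotent class by \Cref{product_of_distinct_unipotent_class_PSL}; choosing any unipotent $\C\neq\C_3$ inside $\C_1\C_2$ (available since $k$ not being quadratically closed supplies at least two unipotent classes), one gets $g\in\{\mathrm{unipotents}\}\subseteq\C\C_3\subseteq\C_1\C_2\C_3$ for unipotent $g$, while for semisimple $g$ a semisimple $D\subseteq\Phi$ with $D\neq g$ combined with \Cref{semisimple_times_unipotent_PSL} does the job. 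In the borderline case one rewrites $\C_1\C_2\C_3=\C_2^2\cdot\C_1$; using the description of $\C_2^2$ that follows from \Cref{product_of_distinct_unipotent_class_PSL}, the same two-sided analysis applies—varying a unipotent $\C\subseteq\C_2^2$ for unipotent $g$ (here one invokes the straightforward surjectivity $\bigcup_\C\Delta_{\overline{\epsilon_\C},\C_1}=k^\times/k^{\times 2}$, which holds because $\tr(\C_1)\pm2\neq0$ lies in a fixed coset of $k^{\times 2}$ while $\overline{\epsilon_\C}$ ranges over all of $k^\times/k^{\times 2}$), and varying a semisimple $D\subseteq\C_2^2$ (with a fall-back to admissible unipotents in the degenerate sub-subcase where $\Phi$ offers only $\{g\}$) for semisimple $g$. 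The technical heart is checking that in each sub-situation the supply of relevant classes inside $\C_1\C_2$ is adequate—this is the combined content of the hypotheses $u(k)\leq2$, $|k|\geq5$, and ``$k$ not quadratically closed''.
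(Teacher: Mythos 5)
Your argument is correct in substance but takes a genuinely different route from the paper. The paper first proves an $\SL_2(k)$ statement (\Cref{product_of_three_proposition}), namely $\SL_2(k)\setminus\{\pm I\}\subseteq \D_1\D_2\D_3$, via a case analysis that must track negative unipotent classes, treat $|k|=7$ separately inside the proof, and handle $\F_5$ in an additional proposition before projecting to $\psl_2(k)$. You instead work entirely at the level of $\psl_2(k)$ using \Cref{square_semisimple_class_PSL}--\Cref{semisimple_times_unipotent_PSL}, together with two devices the paper does not exploit: the identity $AB=BA$ for a conjugacy class $A$ and arbitrary subset $B$ (making the triple product symmetric in its factors, so a semisimple class can always be moved to the third slot), and the observation that once an intermediate product equals $G\setminus\{1\}$ one finishes by choosing $c\in\C_3\setminus\{g\}$. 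This is shorter, uniform in $|k|\geq 5$, and avoids the negative-unipotent bookkeeping entirely; what it costs is that the needed supply of intermediate classes must be verified at the $\psl_2$ level rather than read off from the finer $\SL_2$ propositions.

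Two points need tightening before this is a complete proof. First, the claim that for distinct unipotent classes $\C_1\C_2\supseteq\Phi$ contains at least two distinct semisimple $\psl_2(k)$-classes is true but does not follow from the count you indicate: the traces occurring in $\Phi$ form the negation-stable set $(2-\epsilon_1\epsilon_2k^{\times 2})\cup(-2+\epsilon_1\epsilon_2k^{\times 2})$, and the bound ``number of classes $\geq |k^{\times 2}|/2$'' yields only one class when $|k|=5$; for $\F_5$ one must check directly that the traces $0$ and $\pm 1$ both occur, giving two classes. Second, in the borderline case ($\C_1$ semisimple, $\C_2=\C_3$ unipotent) your fall-back for semisimple $g$ is misstated: the ``admissible unipotents'' are irrelevant to semisimple targets, and the route $g\in\C_1U'$ for a unipotent $U'\subseteq\C_2^2$ fails precisely when $[g]=\C_1$, $\tr(\C_1)=0$ and $-1\notin k^{\times 2}$, by \Cref{semisimple_times_unipotent_PSL}(2). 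The correct patch in the degenerate sub-case where no semisimple class of $\C_2^2$ differs from $\C_1$ is to note that then $\C_1\subseteq\C_2^2$ (as $\Phi\neq\emptyset$), hence $\C_1^2\subseteq\C_2^2\C_1=\C_1\C_2\C_3$, and $\C_1^2$ contains every semisimple element by \Cref{square_semisimple_class_PSL}. With these two repairs the proof goes through.
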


	\noindent We prove this result in \Cref{product_of_three_classes}. We remark that using \Cref{square_semisimple_class_PSL}-\Cref{semisimple_times_unipotent_PSL}, it is easy to determine when $1 \in \C_1\C_2\C_3$ in the above theorem. We omit the details here. We conclude our article with \Cref{commutator_map}, where we apply our results to determine when an element of $\psl_2(k)$ (or $\SL_2(k)$) can be written as a commutator of a semisimple element and a unipotent element. This discussion relates to a recent conjecture on \textbf{coprime commutators} in finite non-abelian simple groups by P. Shumyatsky (see \cite[Section 3]{sh}).  
	
	\section{Preliminaries}\label{preliminaries}
	
	In this section, we collect some well-known facts on conjugacy classes in $\SL_2(k)$ and $\psl_2(k)$. 
	
	\subsection{Conjugacy classes in $\SL_2(k)$ where $k$ is a field with $u(k)\leq 2$} Let  $A \in \GL_2(k)$ be a non-central element. The characteristic polynomial of $A$ (which is a monic quadratic polynomial over $k$ with non-zero constant term) determines $A$ upto conjugacy, that is,
	
	\begin{lemma}\label{conjugacy_GL}
		Let $A,B$ be two non-scalar matrices in $\GL_2(k)$. Then $A$ is conjugate to $B$ in $\GL_2(k)$ if and only if their characteristic polynomials coincide.
	\end{lemma}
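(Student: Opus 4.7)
The forward direction is immediate, since the characteristic polynomial is a conjugation invariant. For the converse, the plan is to show that any non-scalar $A \in \GL_2(k)$ with characteristic polynomial $p(x) = x^2 - tx + d$ is conjugate in $\GL_2(k)$ to the companion matrix $C_p := \begin{pmatrix} 0 & -d \\ 1 & t \end{pmatrix}$. Since $C_p$ depends only on $p$, two non-scalar matrices sharing the same characteristic polynomial will both be conjugate to the same companion matrix, and hence to each other.

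The central step is to produce a vector $v \in k^2$ such that $\{v, Av\}$ is a basis of $k^2$. Given such a $v$, one reads off the matrix of $A$ in the ordered basis $(v, Av)$: the first column is $(0,1)^{\tra}$ by construction, and the second column is determined by Cayley--Hamilton, namely $A(Av) = tAv - d\,v$, giving exactly the companion matrix $C_p$. The change-of-basis matrix then provides an explicit element of $\GL_2(k)$ conjugating $A$ to $C_p$.

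The main (and only) point requiring justification is the existence of $v$ with $Av \notin kv$. This is precisely where the non-scalar hypothesis enters. If one had $Av \in kv$ for every $v \in k^2$, then every nonzero vector would be an eigenvector of $A$; applying this to $e_1$, $e_2$, and $e_1 + e_2$ shows that the associated eigenvalues must coincide, forcing $A$ to be a scalar matrix, a contradiction. Thus a suitable $v$ exists, and the proof is complete. The argument is short and the main obstacle is conceptual rather than computational: recognizing that the non-scalar condition is exactly what is needed to guarantee that $A$ admits a cyclic vector, so that the rational canonical form reduces to a single companion block.
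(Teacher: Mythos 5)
Your proof is correct and complete. Note that the paper does not actually prove this lemma: it is stated in the preliminaries as a well-known fact (it is the $2\times 2$ case of the rational canonical form, and the subsequent \Cref{conjugacy_SL} is deduced from it together with cited references). Your argument — producing a cyclic vector $v$ from the non-scalar hypothesis, reading off the companion matrix of the characteristic polynomial in the basis $(v,Av)$ via Cayley--Hamilton, and concluding that any two non-scalar matrices with the same characteristic polynomial are conjugate to a common companion matrix — is exactly the standard proof the authors are implicitly invoking, and every step (in particular the claim that $Av\in kv$ for all $v$ forces $A$ to be scalar, checked on $e_1$, $e_2$, $e_1+e_2$) is sound over an arbitrary field.
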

	
	Suppose $g\in \SL_2(k)$. Then the characteristic polynomial of $g$ is given by $x^2-\lambda x+1$, where $\lambda$ is the trace of $g$. We say that an element $g\in \GL_2(k)$ is  \textbf{negative unipotent} if its minimal polynomial is $(x+1)^2$. The following lemma summarizes the conjugacy classes in $\SL_2(k)$. 
	
	\begin{proposition}\label{conjugacy_SL}
		Let $k$ be a field with $u(k)\leq 2$ and let $x,y$ be two non-central elements of $\SL_2(k)$. 
		\begin{enumerate}
			\item If $x$ and $y$  are semisimple, then $x$ and $y$ are conjugate in $\GL_2(k)$ if and only if $x$ and $y$ are conjugate in $\SL_2(k)$. In particular, $x$ and $y$ are conjugate in $\SL_2(k)$ if and only if $\tr(x)=\tr(y)$.
			
			\item The unipotent conjugacy classes in $\SL_2(k)$ are in one-one correspondence with elements of $k^{\times}/k^{\times 2}$. More precisely, for $\epsilon_1,\epsilon_2\in k^{\times}$,  the $\SL_2(k)$-conjugacy class of  $\begin{pmatrix}1 & \epsilon_1 \\ 0 & 1\end{pmatrix}$ and the $\SL_2(k)$-conjugacy class of $\begin{pmatrix} 1 & \epsilon_2\\ 0 & 1\end{pmatrix}$ coincide if and only if $\overline{\epsilon_1}=\overline{\epsilon_2}$ in $k^{\times}/k^{\times 2}$. 
			
			\item For $\overline{\epsilon}\in k^{\times}/k^{\times 2}$, let $U_{\overline{\epsilon}}$ denote the $\SL_2(k)$-conjugacy class of  $\begin{pmatrix}1 & \epsilon\\ 0 & 1\end{pmatrix}$. Then 
			$$U_{\overline{\epsilon}}= \begin{pmatrix}\epsilon & 0\\ 0 & 1 \end{pmatrix} U_{\bar{1}} \begin{pmatrix} \epsilon^{-1} & 0\\ 0 & 1 \end{pmatrix}$$
			
			\item The same conclusion as in (2) and (3) holds for the $\SL_2(k)$-conjugacy classes of negative unipotent elements. 
		\end{enumerate}
	\end{proposition}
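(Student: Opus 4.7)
The unifying observation is that for each part, the question reduces to a centralizer computation: if $y = gxg^{-1}$ for some $g \in \GL_2(k)$, then $x$ and $y$ are also conjugate in $\SL_2(k)$ if and only if $\det(g)$ lies in the image of $\det : C_{\GL_2(k)}(x) \to k^\times$. The plan is to analyze this determinant image case by case and combine it with Lemma \ref{conjugacy_GL}; the main obstacle will be the non-split semisimple case, where the hypothesis $u(k) \leq 2$ is needed in an essential way.

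For (1), $C_{\GL_2(k)}(x)$ is a maximal torus of $\GL_2(k)$. In the split case we may assume $x$ is diagonal, the centralizer is the diagonal torus, and $\det$ trivially maps onto $k^\times$. In the non-split case, with the eigenvalues of $x$ lying in a quadratic extension $L = k(\sqrt{d})$, the centralizer is isomorphic to $L^\times$ (via $k[x] \cong L$) and $\det$ corresponds to the norm map $N_{L/k}: L^\times \to k^\times$. Here $u(k) \leq 2$ forces the ternary form $X^2 - dY^2 - cZ^2$ to be isotropic for every $c \in k^\times$, and since $d \notin k^{\times 2}$ the coordinate $Z$ of any nontrivial zero is nonzero; this exhibits $c$ as a norm from $L$. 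Hence $\det : C_{\GL_2(k)}(x) \to k^\times$ is surjective in both cases, proving the first assertion of (1). The ``in particular'' conclusion then follows from Lemma \ref{conjugacy_GL} together with the fact that the characteristic polynomial of any $x \in \SL_2(k)$ is $t^2 - \tr(x)t + 1$.

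Part (3) is a direct matrix computation. For (2), Lemma \ref{conjugacy_GL} shows that all the elements $u_\epsilon := \begin{pmatrix} 1 & \epsilon \\ 0 & 1 \end{pmatrix}$ with $\epsilon \in k^\times$ lie in a single $\GL_2(k)$-class; if $u = g u_1 g^{-1}$ is an arbitrary non-central unipotent then, setting $\epsilon := \det(g)$ and using (3), one obtains $u = g'\, u_\epsilon\, g'^{-1}$ with $g' := g\,\diag(\epsilon,1)^{-1} \in \SL_2(k)$, so every non-central unipotent of $\SL_2(k)$ is $\SL_2(k)$-conjugate to some $u_\epsilon$. A short computation gives
\[
C_{\GL_2(k)}(u_1) = \left\{ \begin{pmatrix} a & b \\ 0 & a \end{pmatrix} : a \in k^\times,\ b \in k \right\},
\]
whose determinant image is exactly $k^{\times 2}$; combined with the centralizer reduction from the first paragraph, this yields that $u_{\epsilon_1}$ and $u_{\epsilon_2}$ are $\SL_2(k)$-conjugate if and only if $\overline{\epsilon_1} = \overline{\epsilon_2}$ in $k^\times / k^{\times 2}$. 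Finally, (4) is immediate: the map $u \mapsto -u$ is a bijection between non-central unipotent and negative unipotent elements that commutes with $\SL_2(k)$-conjugation because $-I$ is central, so the parametrization and the conjugator formula from (2) and (3) transfer verbatim.
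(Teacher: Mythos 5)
Your proof is correct. Note that the paper does not actually prove this proposition: it simply invokes Lemma \ref{conjugacy_GL} and cites external references ([wa, 2.7], [mac, 3.2]), so your write-up is a self-contained reconstruction of the standard argument those references contain, namely that the fusion of a $\GL_2(k)$-class into $\SL_2(k)$-classes is controlled by the image of $\det$ on the $\GL_2(k)$-centralizer. All the individual steps check out: the centralizer of a diagonal regular element surjects onto $k^\times$ under $\det$; for a non-split semisimple element the centralizer is $k[x]^\times\cong L^\times$ with $\det$ the norm, and the isotropy of $X^2-dY^2-cZ^2$ (this is the one place $u(k)\le 2$ is genuinely used, and your argument that $Z_0\neq 0$ is exactly right) gives surjectivity of the norm; the unipotent centralizer has determinant image $k^{\times 2}$, which produces the $k^\times/k^{\times 2}$ parametrization; and the reduction of (4) to (2)--(3) via $u\mapsto -u$ is immediate since $-I$ is central. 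The only thing worth flagging is that writing the quadratic extension as $k(\sqrt d)$ uses $\mathsf{char}(k)\neq 2$, which is a standing hypothesis of the paper but is worth stating explicitly where you use it.
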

	
	\begin{proof}
		The proof follows directly from the previous lemma and \cite[2.7]{wa}. See also \cite[3.2]{mac}. 
	\end{proof}
	
	\noindent The following table summarizes the conjugacy classes in $\SL_2(k)$. The notations for the unipotent and negative unipotent classes in $\SL_2(k)$ given in the table will be used throughout this article.
	
	\begin{table}[ht]
		\begin{tabular}{ |c|c|c| }
			\hline
			Class type & Representative & Trace \\ 
			\hline
			Central & $\pm I$ & $\pm 2$\\
			\hline
			Unipotent $U_{\overline{\epsilon}};\; \overline{\epsilon} \in k^{\times}/k^{\times 2}$ & $\left(\begin{array}{cc} 1 & \epsilon \\ 0 & 1 \end{array}\right)$ & 2\\
			\hline 
			Negative unipotent $U_{-1,\overline{\epsilon}};\;\overline{\epsilon} \in k^{\times}/k^{\times 2}$ & $\begin{pmatrix} -1 & \epsilon \\ 0 & -1 \end{pmatrix}$ & $-2$\\
			\hline 
			Split semisimple & $\begin{pmatrix} \alpha & 0 \\ 0 & \alpha^{-1} \end{pmatrix}$; $\alpha\in k^{\times}\setminus \{\pm 1\}$ & $\alpha+\alpha^{-1}$\\
			\hline
			Non-split semisimple & $\begin{pmatrix} 0 & -1 \\ 1 & \lambda \end{pmatrix}$; $\lambda \in k$ such that $x^2-\lambda x+1$  & $\lambda$ \\
			&is irreducible over $k$ &\\
			\hline
		\end{tabular}
		\hspace{2 cm}
		\caption{\label{table1}Conjugacy classes in $\SL_2(k)$; $k$ is a field with $u(k)\leq 2$.}
	\end{table}
	
	\subsection{Reality properties in $\SL_2(k)$}
	
	Let $G$ be a group. An element $x\in G$ is called real if $x$ and $x^{-1}$ are conjugate in $G$. The conjugacy class of a real element is called a real conjugacy class. Let $\C$ be a conjugacy class in $G$. Let $\C^{-1}:=\{x^{-1}\mid x\in \C\}$. Thus $\C$ is a real conjugacy class if and only if $\C=\C^{-1}$. In this subsection, we determine real conjugacy classes in $\SL_2(k)$. The following lemma is obvious.
	
	\begin{lemma}\label{inverse_trace}
		Let $x\in \SL_2(k)$. Then $\tr(x)=\tr(x^{-1})$.
	\end{lemma}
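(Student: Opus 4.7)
The plan is to exploit the fact that a $2{\times}2$ matrix of determinant $1$ has an extremely simple explicit inverse, from which the trace can be read off. Concretely, I would write $x=\begin{pmatrix} a & b\\ c & d\end{pmatrix}\in \SL_2(k)$, so that $ad-bc=1$, and then invoke the classical adjugate formula
\[
x^{-1}=\begin{pmatrix} d & -b\\ -c & a\end{pmatrix}.
\]
Taking traces immediately gives $\tr(x^{-1})=d+a=\tr(x)$, and the lemma is proved.

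A conceptually cleaner alternative I would mention as a remark (but not use as the main proof, since the calculation above is already one line) is the Cayley--Hamilton approach: the characteristic polynomial of $x\in\SL_2(k)$ is $X^2-\tr(x)X+1$, so $x^2-\tr(x)x+I=0$. Multiplying by $x^{-1}$ and rearranging yields $x^{-1}=\tr(x)I-x$, and taking traces (noting $\tr(I)=2$) produces $\tr(x^{-1})=2\tr(x)-\tr(x)=\tr(x)$. This has the mild advantage of making it transparent that the identity $\tr(x)=\tr(x^{-1})$ is a purely characteristic-polynomial phenomenon, valid whenever $\det(x)=1$.

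There is no real obstacle here: the statement is an elementary identity for $2\times 2$ matrices of determinant one, and both proofs above are essentially immediate. The only point worth remembering is that the identity $\tr(x)=\tr(x^{-1})$ is a special feature of $\SL_2$; over $\SL_n$ with $n\geq 3$ it fails in general, so the argument genuinely uses that $\det(x)=1$ together with $n=2$ (equivalently, that the two eigenvalues of $x$ over an algebraic closure are mutually inverse).
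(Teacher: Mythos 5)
Your proof is correct; the paper simply declares this lemma obvious and gives no proof, and your one-line adjugate computation $x^{-1}=\left(\begin{smallmatrix} d & -b\\ -c & a\end{smallmatrix}\right)$ is exactly the intended justification. The Cayley--Hamilton remark and the observation that the identity is special to $n=2$ with $\det(x)=1$ are accurate but not needed.
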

	
	\noindent The following proposition characterizes real conjugacy classes in $\SL_2(k)$.
	
	\begin{proposition}\label{reality_properties_SL2}
		Let $k$ be a field with $u(k)\leq 2$ and let $\C$ be a conjugacy class in $\SL_2(k)$. The followings hold:
		\begin{enumerate}
			\item If $\C$ is semisimple, then $\C$ is a real conjugacy class.
			
			\item For $\overline{\epsilon}\in k^{\times}/k^{\times 2}$, we have: $U_{\overline{\epsilon}}^{-1}=U_{\overline{-\epsilon}}$ and $U_{-1,\overline{\epsilon}}^{-1}=U_{-1,\overline{-\epsilon}}.$
			
			\item For $\overline{\epsilon}\in k^{\times}/k^{\times 2}$, $U_{\overline{\epsilon}}$ is a real conjugacy class in $\SL_2(k)$ if and only if $-1\in k^{\times 2}$. The same holds for $U_{-1,\overline{\epsilon}}$.
		\end{enumerate}
	\end{proposition}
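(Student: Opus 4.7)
The plan is to reduce each of the three parts to the trace identity of \Cref{inverse_trace} together with the classification of conjugacy classes in \Cref{conjugacy_SL}; no deeper input is required.

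For part (1), if $x \in \SL_2(k)$ is semisimple (in particular non-central), then so is $x^{-1}$, and \Cref{inverse_trace} gives $\tr(x^{-1}) = \tr(x)$. Applying \Cref{conjugacy_SL}(1) immediately yields $x \sim x^{-1}$, so the conjugacy class $\C$ satisfies $\C = \C^{-1}$. This is all that is needed; the $u$-invariant hypothesis only enters implicitly through the applicability of \Cref{conjugacy_SL}.

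For part (2), the plan is a direct matrix computation on the canonical representatives. For the unipotent case, I would invert $\begin{pmatrix}1 & \epsilon\\ 0 & 1\end{pmatrix}$ to get $\begin{pmatrix}1 & -\epsilon\\ 0 & 1\end{pmatrix}$, which by the definition in \Cref{conjugacy_SL}(2) represents $U_{\overline{-\epsilon}}$. For the negative unipotent case, inverting $\begin{pmatrix}-1 & \epsilon\\ 0 & -1\end{pmatrix}$ (determinant $1$) gives $\begin{pmatrix}-1 & -\epsilon\\ 0 & -1\end{pmatrix}$, the canonical representative of $U_{-1,\overline{-\epsilon}}$. The only mild bookkeeping issue here is tracking the sign of the $(1,2)$-entry so as to identify the inverse with the correct class label.

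For part (3), I would combine part (2) with the bijection from \Cref{conjugacy_SL}(2) between unipotent classes and $k^{\times}/k^{\times 2}$. Specifically, $U_{\overline{\epsilon}}$ is real iff $U_{\overline{\epsilon}} = U_{\overline{\epsilon}}^{-1} = U_{\overline{-\epsilon}}$, iff $\overline{\epsilon} = \overline{-\epsilon}$ in $k^{\times}/k^{\times 2}$, iff $-1 \in k^{\times 2}$; the negative unipotent case is identical using \Cref{conjugacy_SL}(4). There is no real obstacle — the whole proposition is a formal consequence of the classification and the trace identity, and in particular does not require any further use of the hypothesis $u(k) \leq 2$ beyond what is already packaged into \Cref{conjugacy_SL}.
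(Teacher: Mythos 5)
Your proposal is correct and follows exactly the paper's own argument: part (1) from \Cref{inverse_trace} together with \Cref{conjugacy_SL}(1), part (2) by direct computation on the canonical representatives, and part (3) by combining part (2) with the parametrization of unipotent and negative unipotent classes by $k^{\times}/k^{\times 2}$. The matrix inversions and the sign bookkeeping in part (2) are all accurate, so there is nothing to add.
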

	
	\begin{proof}
		For (1), the proof follows from the previous lemma and \Cref{conjugacy_SL}(1). A direct computation yields (2). Finally,  (3) follows from (2) and \Cref{conjugacy_SL}(2).
	\end{proof}
	\subsection{Bruhat decomposition in $\SL_2(k)$}
	
	The Bruhat decomposition of a Chevalley group $G$ (see \cite[Proposition 8.3]{ca}) describes the double cosets of a Borel subgroup $B$ of $G$, in terms of the Weyl group $N_G(T)/T$ of a maximal torus $T\subseteq B$. We apply this to the group $\SL_2(k)$. 	Let $B:=\left\{\begin{pmatrix}
		a & b \\ 0 & a^{-1}
	\end{pmatrix} \mid a\in k^{\times}, b\in k\right\}$ be the subgroup of upper-triagular matrices in $\SL_2(k)$. This is a Borel subgroup of $\SL_2(k)$ containing the diagonal torus $H:=\left\{\begin{pmatrix} a & 0\\ 0 & a^{-1} \end{pmatrix} \mid a\in k^{\times} \right\}$. The normalizer of $H$  in $\SL_2(k)$ is the set of all monomial matrices in $\SL_2(k)$ and it is easy to see that the Weyl group is isomorphic to $S_2$, the symmetric group in two letters. Let $\n=\begin{pmatrix} 0 & 1 \\ -1 & 0 \end{pmatrix}$ be the non-trivial Weyl group element.  Let $\h(a)=\begin{pmatrix} a & 0 \\ 0 & a^{-1} \end{pmatrix}$ and define $\n(a):=\h(a)\n=\begin{pmatrix} 0 & a \\ -a^{-1} & 0 \end{pmatrix}$. Note that $\n=\n(1)$. With these notations, we have
	$$\SL_2(k)=B \sqcup B\n B.$$
	Let $\x_{12}(t)=\begin{pmatrix} 1 & t \\ 0 & 1 \end{pmatrix}$, where $t\in k$. Any element of $B$ can be written uniquely in the form  $\h(a)\x_{12}(t)$ and any element of $B\n B$ can be written uniquely in the form $\x_{12}(t)\n(a)\x_{12}(s)$, where $a\in k^{\times}$ and $t,s\in k$. More generally, this is called the \emph{``sharp form of Bruhat decomposition''} in case of arbitrary Chevalley groups (see  \cite[Theorem 8.4.3]{ca}). Thus every element in $\SL_2(k)$ can be written uniquely in any one of the above two forms. The following proposition gives the expressions for the product of two such elements.

	\begin{proposition}{\cite[Proposition 2.2]{ks}}
	\label{product_Bruhat_elements}
		With the notations as above
		\begin{enumerate}
			\item $\h(\alpha_1)\x_{12}(\psi_1). \h(\alpha_2)\x_{12}(\psi_2)=\h(\alpha_1\alpha_2)\x_{12}(\alpha_2^{-2}\psi_1+\psi_2),$
			\item $\h(\alpha_1)\x_{12}(\psi_1).\x_{12}(\tau_2)\n(\alpha_2)\x_{12}(\psi_2)=\x_{12}(\alpha_1^2(\psi_1+\tau_2))\n(\alpha_1\alpha_2)\x_{12}(\psi_2),$
			\item $\x_{12}(\tau_1)\n(\alpha_1)\x_{12}(\psi_1).\x_{12}(\tau_2)\n_(\alpha_2)\x_{12}(\psi_2)$\\
			=$\begin{cases}
				\x_{12}(\tau_1-\frac{\alpha_1^2}{\psi_1+\tau_2})\n(-\frac{\alpha_1\alpha_2}{\psi_1+\tau_2})\x_{12}(\psi_2-\frac{\alpha_2^2}{\psi_1+\tau_2}) & \text{when } \psi_1+\tau_2\neq 0\\
				\h(-\frac{\alpha_1}{\alpha_2})\x_{12}(\frac{\alpha_2^2}{\alpha_1^2}\tau_1+\psi_2) & \text{when } \psi_1+\tau_2= 0.
			\end{cases}$
		\end{enumerate}
	\end{proposition}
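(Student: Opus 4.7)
The plan is to establish all three identities by direct matrix multiplication, invoking uniqueness in the sharp form of the Bruhat decomposition to read off the answers. I would first record the elementary commutation relations
\[
\x_{12}(s)\x_{12}(t)=\x_{12}(s+t),\quad \h(a)\h(b)=\h(ab),\quad \x_{12}(t)\h(a)=\h(a)\x_{12}(a^{-2}t),\quad \h(a)\n=\n\h(a^{-1}),
\]
each immediate from the $2\times 2$ formulas, together with the definitional identity $\n(\alpha)=\h(\alpha)\n$.

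For (1), I would push $\x_{12}(\psi_1)$ past $\h(\alpha_2)$ using the third relation, collapse the adjacent $\h$'s via the second, and merge the two resulting unipotents via the first. For (2), the same push yields $\h(\alpha_1)\x_{12}(\psi_1+\tau_2)\n(\alpha_2)\x_{12}(\psi_2)$; moving $\x_{12}(\psi_1+\tau_2)$ to the left of $\h(\alpha_1)$ via the reverse commutation and using $\h(\alpha_1)\n(\alpha_2)=\h(\alpha_1)\h(\alpha_2)\n=\n(\alpha_1\alpha_2)$ finishes the formula.

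For (3), set $\beta:=\psi_1+\tau_2$. Merging the central $\x_{12}$'s reduces the product to $\x_{12}(\tau_1)\n(\alpha_1)\x_{12}(\beta)\n(\alpha_2)\x_{12}(\psi_2)$, and a direct three-matrix multiplication gives
\[
M:=\n(\alpha_1)\x_{12}(\beta)\n(\alpha_2)=\begin{pmatrix}-\alpha_1\alpha_2^{-1} & 0\\ \beta(\alpha_1\alpha_2)^{-1} & -\alpha_1^{-1}\alpha_2\end{pmatrix}.
\]
When $\beta=0$, $M=\h(-\alpha_1/\alpha_2)$ is diagonal, and the commutation $\x_{12}(\tau_1)\h(-\alpha_1/\alpha_2)=\h(-\alpha_1/\alpha_2)\x_{12}((\alpha_2^2/\alpha_1^2)\tau_1)$ combined with $\x_{12}((\alpha_2^2/\alpha_1^2)\tau_1)\x_{12}(\psi_2)=\x_{12}((\alpha_2^2/\alpha_1^2)\tau_1+\psi_2)$ produces the second case of (3). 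When $\beta\neq 0$, the full product $\x_{12}(\tau_1)M\x_{12}(\psi_2)$ has nonzero lower-left entry and so lies in the big cell.

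The main technical step, and the one most prone to sign errors, is to express a generic matrix $\begin{pmatrix}a&b\\c&d\end{pmatrix}\in\SL_2(k)$ with $c\neq 0$ in sharp Bruhat form: a short direct check shows that it equals $\x_{12}(a/c)\n(-c^{-1})\x_{12}(d/c)$, the $(1,2)$-entry being forced automatically by $ad-bc=1$. Substituting the explicit entries of $\x_{12}(\tau_1)M\x_{12}(\psi_2)$ and simplifying produces exactly the triple $(\tau_1-\alpha_1^2/\beta,\,-\alpha_1\alpha_2/\beta,\,\psi_2-\alpha_2^2/\beta)$ claimed. No tool beyond these commutation relations and uniqueness of the sharp Bruhat decomposition is required; the chief obstacle is purely computational bookkeeping in this final substitution.
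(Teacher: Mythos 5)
Your computation is correct: all the commutation relations, the evaluation of $M=\n(\alpha_1)\x_{12}(\beta)\n(\alpha_2)$, and the sharp-form identity $\begin{pmatrix}a&b\\c&d\end{pmatrix}=\x_{12}(a/c)\n(-c^{-1})\x_{12}(d/c)$ for $c\neq 0$ all check out and yield exactly the stated triples. The paper itself gives no proof (the proposition is quoted from the reference \cite{ks}), and direct matrix multiplication plus uniqueness of the sharp Bruhat form is precisely the expected argument.
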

	
	\noindent The trace of the matrices written in the above two forms will be required.
	\begin{lemma}\label{trace_Bruhat_elements}
		The trace of $\h(a)\x_{12}(t)$ is $a+a^{-1}$ and that of $\x_{12}(t)\n(a)\x_{12}(s)$ is $-a^{-1}(t+s)$.
	\end{lemma}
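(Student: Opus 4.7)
The plan is to prove both trace formulas by direct matrix multiplication, using the explicit forms of $\h(a)$, $\x_{12}(t)$, and $\n(a)$ that are already set down in the paper.

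For the first formula, I would compute
\[
\h(a)\x_{12}(t) = \begin{pmatrix} a & 0 \\ 0 & a^{-1} \end{pmatrix}\begin{pmatrix} 1 & t \\ 0 & 1 \end{pmatrix} = \begin{pmatrix} a & at \\ 0 & a^{-1} \end{pmatrix},
\]
and simply read off the trace as $a+a^{-1}$. This is the easy half; no subtlety is involved because the matrix is already upper triangular.

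For the second formula, I would perform the triple product in two stages. First compute
\[
\x_{12}(t)\n(a) = \begin{pmatrix} 1 & t \\ 0 & 1 \end{pmatrix}\begin{pmatrix} 0 & a \\ -a^{-1} & 0 \end{pmatrix} = \begin{pmatrix} -a^{-1}t & a \\ -a^{-1} & 0 \end{pmatrix},
\]
and then multiply the result on the right by $\x_{12}(s)$. Since right-multiplication by $\x_{12}(s)$ only alters the second column, the diagonal entries of the final product are $-a^{-1}t$ (top-left, unchanged from the previous step) and $-a^{-1}s$ (bottom-right, obtained from the bottom row $(-a^{-1},0)$ acting on the second column of $\x_{12}(s)$). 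Adding these gives the trace $-a^{-1}(t+s)$, as claimed.

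There is no real obstacle: both identities are routine $2\times 2$ matrix computations. The only thing to be careful about is tracking the sign coming from the $-a^{-1}$ entry of $\n(a)$, which is the source of the minus sign in the second formula. Once both products are written out, the lemma follows at once.
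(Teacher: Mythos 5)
Your computation is correct: both matrix products are worked out accurately, and the trace formulas $a+a^{-1}$ and $-a^{-1}(t+s)$ follow exactly as you describe. The paper omits the proof entirely, treating the lemma as a routine verification, and your direct calculation is precisely the intended argument.
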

	
	We end this section with some general observations on products of conjugacy classes in $\SL_2(k)$. The following result of Macbeath (originally over finite fields) carries over to arbitrary fields $k$  with $u(k)\leq 2$ as it is.
	
	\begin{theorem}{\cite[Theorem 1]{ma}}\label{trace_result}
		Let $k$ be a field with $u(k)\leq 2$ and $\mathsf{char}(k)\neq 2$. Let $\alpha, \beta, \gamma \in k$. Then there exists $A,B,C\in \SL_2(k)$ such that $\tr(A)=\alpha$, $\tr(B)=\beta$, and $\tr(C)=\gamma$ with $ABC=I$.
	\end{theorem}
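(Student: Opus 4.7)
The strategy is to reduce the problem to finding $A, B \in \SL_2(k)$ with $\tr(A)=\alpha$, $\tr(B)=\beta$, and $\tr(AB)=\gamma$, and then to set $C := (AB)^{-1}$; by \Cref{inverse_trace} we will automatically have $\tr(C) = \tr(AB) = \gamma$ and $ABC = I$. For such a pair I would fix
\[
A := \begin{pmatrix} 0 & -1 \\ 1 & \alpha \end{pmatrix} \in \SL_2(k)
\]
and look for $B = \begin{pmatrix} \beta - s & q \\ r & s \end{pmatrix}$ subject to $(\beta - s)s - qr = 1$ and $\tr(AB) = q - r + \alpha s = \gamma$. Eliminating $q$ between these two conditions leaves the single quadratic relation
\[
r^{2} + (\gamma - \alpha s)\, r + (s^{2} - \beta s + 1) \;=\; 0
\]
in $r$, which admits a solution in $k$ precisely when its discriminant
\[
\Delta(s) \;:=\; (\alpha^{2} - 4)\, s^{2} + (4\beta - 2\alpha\gamma)\, s + (\gamma^{2} - 4)
\]
lies in $k^{\times 2} \cup \{0\}$. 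The entire problem thus reduces to producing some $s \in k$ for which $\Delta(s)$ is a square in $k$.

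For this I would study the projective conic $\mathscr{Q} \subset \mathbb{P}^{2}_{k}$ defined by the homogenization of $\Delta$,
\[
Z^{2} \;=\; (\alpha^{2} - 4)\,T^{2} + (4\beta - 2\alpha\gamma)\,T Y + (\gamma^{2} - 4)\, Y^{2}.
\]
Since $u(k) \leq 2$, the associated ternary quadratic form is isotropic, so $\mathscr{Q}$ has a $k$-rational point. A short direct computation shows that the discriminant of this form is a nonzero scalar multiple of $\alpha^{2} + \beta^{2} + \gamma^{2} - \alpha\beta\gamma - 4$. Whenever this quantity is nonzero, $\mathscr{Q}$ is smooth and therefore $k$-isomorphic to $\mathbb{P}^{1}_{k}$, while its intersection with the line $\{Y = 0\}$ has degree at most two. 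Since $|\mathbb{P}^{1}(k)| \geq |k| + 1 \geq 6$ in the finite case (and is infinite otherwise), some point $(Z_{0} : T_{0} : Y_{0}) \in \mathscr{Q}(k)$ has $Y_{0} \neq 0$; setting $s := T_{0}/Y_{0}$ then yields $\Delta(s) = (Z_{0}/Y_{0})^{2}$, as required.

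The main obstacle is the degenerate locus $\alpha^{2} + \beta^{2} + \gamma^{2} - \alpha\beta\gamma - 4 = 0$, where the smoothness argument breaks down and one must supplement the construction. Here I would split on whether $\alpha^{2} - 4$ vanishes. If $\alpha^{2} - 4 \neq 0$, then $\Delta(s)$ is the perfect-square polynomial $(\alpha^{2} - 4)(s - \mu)^{2}$ for some $\mu \in k$, so choosing $s = \mu$ gives $\Delta(\mu) = 0$, which is trivially a square. If instead $\alpha^{2} - 4 = 0$, so $\alpha = \pm 2$, the relation collapses to $(\beta \mp \gamma)^{2} = 0$, i.e.\ $\beta = \pm \gamma$ with matching sign; in these residual cases the matrix $A$ above no longer helps, so I would instead take $A := \pm I$ together with any $B \in \SL_{2}(k)$ of trace $\beta$ (for example $B = \begin{pmatrix} 0 & -1 \\ 1 & \beta \end{pmatrix}$), so that $C := (AB)^{-1} = \pm B^{-1}$ satisfies $\tr(C) = \pm\beta = \gamma$ and $ABC = I$. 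The hypothesis $\mathsf{char}(k) \neq 2$ is used throughout to divide by $2$ when forming discriminants.
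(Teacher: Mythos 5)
Your proof is correct, and it is worth noting that the paper itself supplies no proof of this statement: it is imported from Macbeath's article (where it is established for finite fields) with only the remark that the argument ``carries over to arbitrary fields $k$ with $u(k)\leq 2$ as it is.'' Your argument is therefore a genuine, self-contained justification that makes the role of the hypothesis $u(k)\leq 2$ explicit. I verified the computations: with $A=\begin{pmatrix}0&-1\\1&\alpha\end{pmatrix}$ and $B=\begin{pmatrix}\beta-s&q\\r&s\end{pmatrix}$ one gets $\tr(AB)=q-r+\alpha s$; substituting $q=\gamma+r-\alpha s$ into $(\beta-s)s-qr=1$ yields exactly your quadratic $r^2+(\gamma-\alpha s)r+(s^2-\beta s+1)=0$, whose discriminant is $\Delta(s)=(\alpha^2-4)s^2+(4\beta-2\alpha\gamma)s+(\gamma^2-4)$; and the Gram determinant of the ternary form $Z^2-\Delta_{\mathrm{hom}}(T,Y)$ is $(\alpha^2-4)(\gamma^2-4)-(2\beta-\alpha\gamma)^2=-4(\alpha^2+\beta^2+\gamma^2-\alpha\beta\gamma-4)$, so the conic is smooth precisely off the Fricke locus, as you claim. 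In the smooth case, isotropy (which is where $u(k)\leq 2$ enters) together with $|\mathbb{P}^1(k)|=|k|+1>2$ gives a point with $Y_0\neq 0$ over \emph{any} field, so your appeal to $|k|\geq 5$ is not even needed. In the degenerate case with $\alpha^2\neq 4$, the discriminant of $\Delta$ as a quadratic in $s$ equals $16(\alpha^2+\beta^2+\gamma^2-\alpha\beta\gamma-4)=0$, confirming $\Delta(s)=(\alpha^2-4)(s-\mu)^2$; and the residual case $\alpha=\pm 2$, $\beta=\pm\gamma$ is correctly handled by taking $A=\pm I$, which is legitimate since the statement imposes no non-centrality on $A,B,C$. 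The only point worth making explicit in a final write-up is the one-line check that the quadratic in $r$ really does encode $\det B=1$ after the substitution for $q$; it does.
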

	
	\noindent Due to \Cref{conjugacy_SL}(1), we get the following as an immediate corollary.
	
	\begin{corollary}\label{product_semisimple_classes_contain_semisimple}
	 Let $k$ be a field with $u(k)\leq 2$ and let $\C_1$ and $\C_2$ be two semisimple conjugacy classes in $\SL_2(k)$. Then $\C_1\C_2$ contains all semisimple elements of $\SL_2(k)$. 
	\end{corollary}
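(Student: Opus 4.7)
The plan is to combine Macbeath's result (\Cref{trace_result}) with the trace classification of semisimple conjugacy classes in $\SL_2(k)$ (\Cref{conjugacy_SL}(1)). Since $\C_1,\C_2$ are semisimple, their traces $\alpha,\beta$ lie in $k\setminus\{\pm 2\}$, and similarly any semisimple target element $g$ we wish to realize has trace $\gamma\in k\setminus\{\pm 2\}$ (as the characteristic polynomial $x^2-\tr(g)x+1$ has distinct roots precisely when $\tr(g)\neq\pm 2$). Consequently, any element of $\SL_2(k)$ whose trace is $\alpha$, $\beta$, or $\gamma$ is automatically semisimple, so its $\SL_2(k)$-conjugacy class is determined by its trace alone.

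With that setup, I would apply \Cref{trace_result} to the triple $(\alpha,\beta,\gamma)$ to obtain matrices $A,B,C\in\SL_2(k)$ with $\tr(A)=\alpha$, $\tr(B)=\beta$, $\tr(C)=\gamma$ and $ABC=I$. Rearranging gives $AB=C^{-1}$. By \Cref{inverse_trace}, $\tr(C^{-1})=\gamma=\tr(g)$, and both $C^{-1}$ and $g$ are semisimple, so \Cref{conjugacy_SL}(1) produces an $h\in\SL_2(k)$ with $hC^{-1}h^{-1}=g$. Then
\[
g=(hAh^{-1})(hBh^{-1}),
\]
and since conjugation preserves traces, $hAh^{-1}\in\C_1$ and $hBh^{-1}\in\C_2$. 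This exhibits $g\in\C_1\C_2$.

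There is no substantive obstacle; the only bookkeeping point is to notice that the hypothesis ``semisimple'' forces all three traces $\alpha,\beta,\gamma$ to avoid $\pm 2$, which is exactly what lets us conclude that the matrices $A,B,C^{-1}$ produced by \Cref{trace_result} sit in the semisimple classes determined by their traces (rather than being central, unipotent, or negative unipotent).
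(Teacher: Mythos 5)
Your proof is correct and is precisely the argument the paper intends: the corollary is stated as an immediate consequence of Macbeath's trace theorem (\Cref{trace_result}) together with the fact that semisimple classes in $\SL_2(k)$ are determined by their traces (\Cref{conjugacy_SL}(1)), and your write-up just fills in the routine details (traces $\neq\pm 2$, $\tr(C^{-1})=\tr(C)$, conjugating the relation $AB=C^{-1}$).
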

	
	\noindent The following two elementary lemmas will be useful.
	
	\begin{lemma}\label{prep_lemma_1}
		Let $G$ be a group and $H$ be a normal subgroup of $G$. Let $\C_1$ and $\C_2$ be two conjugacy classes in $H$ which are conjugacy classes in $G$ as well. Let $x\in H$ be such that  $x\in \C_1\C_2$. Then all $G$-conjugates of $x$ lie in $\C_1\C_2$.
	\end{lemma}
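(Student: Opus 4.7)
The plan is to use only the two structural hypotheses available: that $H\trianglelefteq G$ and that each $\C_i$, although defined as an $H$-conjugacy class, happens to be a single $G$-conjugacy class (and hence is invariant under conjugation by every $g\in G$). Given these, the statement should follow from a single application of the standard conjugation identity $g(ab)g^{-1}=(gag^{-1})(gbg^{-1})$.

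Concretely, I would start by fixing a factorization $x=c_1c_2$ witnessing $x\in\C_1\C_2$, with $c_1\in\C_1$ and $c_2\in\C_2$, and picking an arbitrary $g\in G$. Then I would write $gxg^{-1}=(gc_1g^{-1})(gc_2g^{-1})$ and verify the two required memberships for each factor separately. First, since $c_i\in H$ and $H\trianglelefteq G$, the conjugates $gc_ig^{-1}$ land in $H$. Second, because $\C_i$ is a $G$-conjugacy class by hypothesis, $gc_ig^{-1}\in\C_i$ for $i=1,2$. Multiplying the two memberships yields $gxg^{-1}\in\C_1\C_2$, which is exactly the claim.

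There is no genuine obstacle; the lemma is purely formal. The only point that deserves emphasis in the write-up is the role of the hypothesis that each $\C_i$ is simultaneously an $H$-class and a $G$-class: if the $\C_i$ were merely $H$-conjugacy classes, then $gc_ig^{-1}$ would still lie in $H$ but might belong to a different $H$-class contained in the same $G$-orbit, and the conclusion could fail. This is the key subtlety worth flagging, since the lemma is later invoked precisely to promote results from $\SL_2(k)$-conjugacy statements to $\GL_2(k)$-conjugacy statements (or analogous ambient-group enlargements), where the equality of $H$-classes and $G$-classes must be checked via \Cref{conjugacy_SL}(1).
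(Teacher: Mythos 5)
Your proof is correct and is essentially identical to the paper's: both fix a factorization $x=c_1c_2$, conjugate by an arbitrary $g\in G$ via $gxg^{-1}=(gc_1g^{-1})(gc_2g^{-1})$, and use the hypothesis that each $\C_i$ is a $G$-class to conclude. Your closing remark about why the $G$-class hypothesis is essential (and how the lemma is later used with $H=\SL_2(k)$ inside $G=\GL_2(k)$) is accurate, though the paper does not spell it out.
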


	\begin{proof} Let $x\in H$ be such that $x\in \C_1\C_2$. Then $x=c_1c_2$ for some elements $c_1\in \C_1$ and $c_2\in \C_2$. For any $g\in G$, we have 
		$$gxg^{-1}=g(c_1c_2)g^{-1}= (gc_1g^{-1})(gc_2g^{-1})\in \C_1\C_2$$ 
		since $\C_1, \C_2$ are also conjugacy classes in  $G$.
	\end{proof}
	
	\begin{lemma}\label{prep_lemma_2}
		Let $G$ be a group and $\C_1$, $\C_2$ be two conjugacy classes in $G$. Let $x\in \C_1$ and $y\in \C_2$. If $\sigma \in \C_1\C_2$, then $\sigma$ is conjugate to an element of the form $gxg^{-1}y$ for some $g\in G$.
	\end{lemma}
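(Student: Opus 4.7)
The plan is to unpack the definition of $\sigma \in \C_1\C_2$ and then perform a single conjugation that puts $\sigma$ into the desired normal form. First I would write $\sigma = c_1 c_2$ with $c_1 \in \C_1$ and $c_2 \in \C_2$. Since $x$ and $c_1$ lie in the same $G$-conjugacy class, and similarly for $y$ and $c_2$, I can choose $h_1, h_2 \in G$ such that $c_1 = h_1 x h_1^{-1}$ and $c_2 = h_2 y h_2^{-1}$. Substituting gives $\sigma = h_1 x h_1^{-1} h_2 y h_2^{-1}$.

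The key observation is that conjugating $\sigma$ by $h_2^{-1}$ on the left strips off the outer conjugator on the $y$-factor. Explicitly,
\[
h_2^{-1}\sigma h_2 \;=\; (h_2^{-1} h_1)\, x\, (h_2^{-1} h_1)^{-1}\, y \;=\; g x g^{-1} y,
\]
where $g := h_2^{-1} h_1$. Hence $\sigma$ is $G$-conjugate to $gxg^{-1}y$, which is exactly the required form.

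There is no real obstacle here: the statement is a one-line rearrangement that records a useful normal form. Its purpose, as the later sections make clear, is that when studying $\C_1\C_2$ up to conjugacy, one may fix a representative $y \in \C_2$ once and for all and let the other factor range over the conjugates $gxg^{-1}$ of a fixed $x \in \C_1$; this reduction is what makes the subsequent trace computations using the Bruhat decomposition tractable.
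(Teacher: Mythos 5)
Your argument is correct and is exactly the paper's proof: write $\sigma = h_1 x h_1^{-1}\, h_2 y h_2^{-1}$ and conjugate by $h_2^{-1}$ to obtain $gxg^{-1}y$ with $g = h_2^{-1}h_1$. Nothing to add.
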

	
	\begin{proof}
		Let  $\sigma \in \C_1\C_2$. Then $\sigma=g_1xg_1^{-1}g_2yg_2^{-1}$ for some $g_1,g_2\in G$. We get $g_2^{-1}\sigma g_2=gxg^{-1}y$, where $g=g_2^{-1}g_1$.
	\end{proof}

	\section{Product of two conjugacy classes in $\SL_2(k)$ and $\psl_2(k)$}\label{product_of_two_classes}
	
	In this section, we give complete description of the product of two conjugacy classes in $\SL_2(k)$ for fields $k$ with $u(k)\leq 2$. We start with the product of two semisimple conjugacy classes.
	
	\subsection{Product of two semisimple conjugacy classes in $\SL_2(k)$ and $\psl_2(k)$}
	
	\begin{lemma}\label{lemma_1_sec_3}
		Let $k$ be a field with $u(k)\leq 2$ and let $\C_1$ and $\C_2$ be two semisimple conjugacy classes in $\SL_2(k)$. If $\C_1\C_2$ contains a unipotent element, then it contains all unipotent elements of $\SL_2(k)$. The same holds true for negative unipotent elements of $\SL_2(k)$.
	\end{lemma}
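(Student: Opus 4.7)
The plan is to reduce this to a clean application of \Cref{prep_lemma_1} with $H=\SL_2(k)$ sitting inside the larger group $G=\GL_2(k)$. The key observation is that a semisimple conjugacy class of $\SL_2(k)$ is simultaneously a $\GL_2(k)$-conjugacy class: by \Cref{conjugacy_SL}(1), two semisimple elements of $\SL_2(k)$ are $\GL_2(k)$-conjugate if and only if they are $\SL_2(k)$-conjugate, and since conjugation preserves the determinant, the $\GL_2(k)$-conjugacy class of a semisimple element of $\SL_2(k)$ stays inside $\SL_2(k)$. Thus $\C_1$ and $\C_2$ satisfy the hypothesis of \Cref{prep_lemma_1}.

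Next, I would observe that $\GL_2(k)$ acts transitively by conjugation on the set of all unipotent elements of $\SL_2(k)$, and likewise on the set of all negative unipotent elements. Indeed, by \Cref{conjugacy_GL}, any two non-central matrices with the same characteristic polynomial are $\GL_2(k)$-conjugate, and all unipotents in $\SL_2(k)$ share the characteristic polynomial $(x-1)^2$ while all negative unipotents share $(x+1)^2$. Concretely, the element $\diag(\epsilon,1)\in \GL_2(k)$ conjugates $\begin{pmatrix}1&1\\0&1\end{pmatrix}$ to $\begin{pmatrix}1&\epsilon\\0&1\end{pmatrix}$, which is exactly the content of \Cref{conjugacy_SL}(3).

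Combining these two ingredients finishes the proof: if $\C_1\C_2$ contains some unipotent $x\in \SL_2(k)$, then by \Cref{prep_lemma_1} every $\GL_2(k)$-conjugate of $x$ also lies in $\C_1\C_2$; by the transitivity just noted, this sweeps out every unipotent element of $\SL_2(k)$, covering each class $U_{\overline{\epsilon}}$ as $\overline{\epsilon}$ ranges over $k^{\times}/k^{\times 2}$. The identical argument, applied to the characteristic polynomial $(x+1)^2$, handles the negative unipotent case. There is no real obstacle here — the only thing to be careful about is verifying the hypothesis of \Cref{prep_lemma_1}, namely that semisimple $\SL_2(k)$-classes are genuine $\GL_2(k)$-classes, which is precisely \Cref{conjugacy_SL}(1).
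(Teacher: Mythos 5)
Your proof is correct and follows exactly the paper's approach: the paper's own (one-line) proof also invokes \Cref{conjugacy_SL} together with \Cref{prep_lemma_1}, i.e.\ viewing $\SL_2(k)$ as normal in $\GL_2(k)$, noting that semisimple $\SL_2(k)$-classes are $\GL_2(k)$-classes while all unipotents (resp.\ negative unipotents) form a single $\GL_2(k)$-class. You have simply spelled out the details that the paper leaves implicit.
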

	
	\begin{proof}
		The lemma follows from  \Cref{conjugacy_SL} and \Cref{prep_lemma_1}.
	\end{proof}

	\noindent The following proposition completely describes the square of a semisimple conjugacy class in $\SL_2(k)$.
	\begin{proposition}\label{square_semisimple_class_SL}
	Let $k$ be a field with $u(k)\leq 2$ and let $G:=\SL_2(k)$. Let $\C$ be a semisimple conjugacy class in $G$. 
		\begin{enumerate}
			\item If $\C$ is split semisimple then $\C^2=G\setminus \{-I\}$ unless $\tr(\C)=0$, in which case $-1\in k^{\times 2}$ and $\C^2=G$.
			\item If $\C$ is non-split semisimple then $\C^2=G\setminus \{\{unipotents\}\cup\{-I\}\}$ unless $\tr(\C)=0$, in which case $-1 \notin k^{\times 2}$ and 
			$$\C^2=G\setminus \{\{unipotents\}\cup\{negative\; unipotents\}\}.$$
		\end{enumerate}
	\end{proposition}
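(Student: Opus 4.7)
The plan is to stratify $\C^2$ by trace and dispose of each stratum separately. By \Cref{product_semisimple_classes_contain_semisimple}, $\C^2$ already contains every semisimple element of $\SL_2(k)$---that is, every non-central element of trace $\neq\pm 2$---so what remains is to decide which of the central elements $\pm I$, unipotent classes $U_{\overline{\epsilon}}$, and negative unipotent classes $U_{-1,\overline{\epsilon}}$ lie in $\C^2$. The pivotal structural observation is that $\C$, being semisimple, is a single $\GL_2(k)$-conjugacy class by \Cref{conjugacy_SL}(1), whence $\C^2$ is $\GL_2(k)$-stable; since $\GL_2(k)$ acts transitively on the set of unipotents (and on the set of negative unipotents) of $\SL_2(k)$, it suffices to exhibit or rule out a \emph{single} unipotent and a \emph{single} negative unipotent element in $\C^2$.

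I would first handle the central elements. Since $\C$ is real by \Cref{reality_properties_SL2}(1), $I=x\cdot x^{-1}\in\C^2$ for any $x\in\C$. For $-I$, the equation $x_1x_2=-I$ with $x_1,x_2\in\C$ forces $x_2=-x_1^{-1}$, and \Cref{inverse_trace} then gives $\tr(\C)=-\tr(\C)$, so $\tr(\C)=0$; conversely, when $\tr(\C)=0$ one has $x^2=-I$ by Cayley--Hamilton, so $-I\in\C^2$. Note that $\tr(\C)=0$ requires $-1\in k^{\times 2}$ in the split case and $-1\notin k^{\times 2}$ in the non-split case, accounting for the parenthetical conditions in the statement.

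By \Cref{prep_lemma_2} I may fix a representative $x\in\C$ and parametrize $\C^2$, up to conjugacy, as $\{x\cdot gxg^{-1}:g\in\SL_2(k)\}$. In the split case $x=\h(\alpha)$ with $\lambda=\alpha+\alpha^{-1}=\tr(\C)$, a short matrix computation with $g=\begin{pmatrix}a&b\\ c&d\end{pmatrix}\in\SL_2(k)$ gives
\[\tr(x\cdot gxg^{-1})\;=\;2+ad\,(\lambda^2-4),\]
which ranges over all of $k$ since $\lambda^2-4\neq 0$ and $ad$ is unconstrained. Setting $ad=0$ with a generic choice of $g$ (e.g.\ $a=0$, $b=-c^{-1}$, $d\neq 0$) produces a nonzero off-diagonal entry, so $\C^2$ contains an honest unipotent. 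Taking $ad=-4/(\lambda^2-4)$, which is valid whenever $\lambda\neq 0$, forces all four off-diagonal entries of the product (which are proportional to $ab(1-\alpha^2)$ and $cd(1-\alpha^{-2})$) to be nonzero, yielding a negative unipotent. The residual case of split $\C$ with $\lambda=0$ (where $-1\in k^{\times 2}$) is dispatched by an explicit lower-triangular calculation. Combined with the central-element analysis this proves (1).

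The subtler half is the non-split case, in which $\lambda^2-4\notin k^{\times 2}$. The analogue of the trace identity shows that $\tr(x\cdot gxg^{-1})=2$ is equivalent to
\[(4-\lambda^2)u^2+(2w-\lambda v)^2\;=\;0,\qquad u=a+d,\;v=a-d,\;w=b-c,\]
and non-squareness of $\lambda^2-4$ forces $u=0$ and $2w=\lambda v$; substituting back collapses the product to $I$, so $\C^2$ contains no unipotent. The trace--$(-2)$ condition reads $(2w-\lambda v)^2-(\lambda^2-4)u^2=16$, a conic whose $k$-rational solvability is guaranteed by the $u$-invariant hypothesis. When $\lambda\neq 0$ the explicit choice $g=\begin{pmatrix}0&1\\-1&0\end{pmatrix}$ gives $x\cdot gxg^{-1}=\begin{pmatrix}-1&0\\2\lambda&-1\end{pmatrix}$, a bona fide negative unipotent. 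The main obstacle lies in the non-split $\lambda=0$ case: here the conic degenerates to $u^2+w^2=4$, the constraint $ad-bc=1$ forces $v^2+(b+c)^2=u^2+w^2-4=0$, and the anisotropy of $X^2+Y^2$ over $k$ (equivalent to $-1\notin k^{\times 2}$) pins every admissible $g$ inside the centralizer of $x$. Consequently $x\cdot gxg^{-1}=x^2=-I$ for every such $g$, so no negative unipotent class lies in $\C^2$; assembling the sub-cases then yields (2).
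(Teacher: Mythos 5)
Your proof is correct, and while it shares the paper's overall skeleton (handle semisimple elements via \Cref{product_semisimple_classes_contain_semisimple}, settle $\pm I$ by reality and the trace/Cayley--Hamilton argument, and reduce each unipotent and negative unipotent question to a single representative via $\GL_2(k)$-stability of $\C^2$, which is exactly \Cref{lemma_1_sec_3}), the two decisive steps are carried out by genuinely different means. Where the paper excludes unipotents (and, in the trace-zero non-split case, negative unipotents) from $\C^2$ by writing elements of $\C$ in the sharp Bruhat form and exploiting the multiplication rules of \Cref{product_Bruhat_elements}, you work directly with the companion matrix and reduce the trace condition to the quadratic identity $\tr(x\,gxg^{-1})=2+\tfrac{1}{4}\bigl((\lambda^2-4)u^2-(2w-\lambda v)^2\bigr)$, so that irreducibility of the characteristic polynomial (non-squareness of $\lambda^2-4$, resp.\ anisotropy of $X^2+Y^2$) forces the product to collapse to $I$ (resp.\ $-I$); I checked this identity and the collapse, and both are right. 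Likewise, where the paper invokes Macbeath's theorem (\Cref{trace_result}) to produce an element of trace $-2$, you exhibit explicit witnesses ($ad=-4/(\lambda^2-4)$ in the split case, $g=\bigl(\begin{smallmatrix}0&1\\-1&0\end{smallmatrix}\bigr)$ giving $\bigl(\begin{smallmatrix}-1&0\\2\lambda&-1\end{smallmatrix}\bigr)$ in the non-split case), which is more elementary and self-contained. Two cosmetic points: the split $\lambda=0$ sub-case is only asserted, not computed (the paper's explicit product with $\diag(\zeta,\zeta^{-1})$ fills this in one line), and the aside about $k$-rational solvability of the conic $(2w-\lambda v)^2-(\lambda^2-4)u^2=16$ is both unnecessary (your explicit elements supersede it) and, as stated, incomplete, since realizing a solution $(u,v,w)$ by an actual $g\in\SL_2(k)$ also requires $u^2-v^2+w^2-4$ to be a square; I would simply delete that sentence. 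Also there are only two off-diagonal entries, not four.
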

	
	\begin{proof}
		Note that $\C^2$ contains all semisimple elements by \Cref{product_semisimple_classes_contain_semisimple}.
		
		Now we consider elements of trace 2. By \Cref{reality_properties_SL2}(1), $I\in \C^2$. Let $\C$ be the conjugacy class of $\begin{pmatrix}\alpha & 0 \\ 0 & \alpha^{-1}\end{pmatrix}$, where $\alpha \in k^{\times}$. We have
		$$\begin{pmatrix}\alpha & 0 \\ 0 & \alpha^{-1}\end{pmatrix}\begin{pmatrix}\alpha^{-1} & 1 \\ 0 & \alpha\end{pmatrix}=\begin{pmatrix} 1 & \alpha \\ 0 & 1\end{pmatrix}.$$
		By \Cref{lemma_1_sec_3}, we conclude that $\C^2$ contains all unipotent elements. Let $\C$ be a non-split semisimple class. Note that any element of $\C$ is contained in $B\n B$. We claim that $U:=\begin{pmatrix} 1 & 1 \\ 0 & 1 \end{pmatrix}\notin \C^2$. On the contrary, assume that there exists $x,y\in \C$ such that $xy=U$. Let $x=\x_{12}(\tau_1)\n(\alpha_1)\x_{12}(\psi_1)$ and $y=\x_{12}(\tau_2)\n(\alpha_2)\x_{12}(\psi_2)$. Note that since $xy\in B$, by \Cref{product_Bruhat_elements}, we must have $\psi_1+\tau_2=0$. Since $U=\h(1)\x_{12}(1)$, by \Cref{product_Bruhat_elements}, we have $\alpha_1=-\alpha_2$ and hence $\tau_1+\psi_2=1$. Thus we conclude that $\tau_1+\psi_1+\tau_2+\psi_2=1$. On the other hand as $x,y\in \C$, we have $\tr(x)=\tr(y)$, whence by \Cref{trace_Bruhat_elements}, we get that $\alpha_1^{-1}(\tau_1+\psi_1)=\alpha_2^{-1}(\tau_2+\psi_2)$. As $\alpha_1=-\alpha_2$, the last equality yields that  $\tau_1+\psi_1+\tau_2+\psi_2=0$, which is a contradiction. By \Cref{lemma_1_sec_3}, we conclude that $\C^2$ does not contain any unipotent element. 
		
		Finally we consider elements of trace $-2$. Note that if $-I\in \C^2$, then $\tr(\C)=0$. Suppose that $\tr(\C)\neq 0$. Then $-I\notin \C^2$. Since $\C^2$ contains a negative unipotent element by \Cref{trace_result}, using \Cref{lemma_1_sec_3} we conclude that $\C^2$ contains all negative unipotent elements. 
		
		Assume now that $\tr(\C)=0$. Then $$\begin{pmatrix}0 & -1\\ 1 & 0 \end{pmatrix}^2=-I.$$ Note that $-1\in k^{\times 2} \Leftrightarrow \C\;\text{is split semisimple}$. Assume that $-1\in k^{\times 2}$. Then $\C$ is the conjugacy class of $diag(\zeta,\zeta^{-1})$, where $\zeta \in k^{\times}$ and $\zeta^2=-1$. Note that 
		$$\begin{pmatrix}\zeta & 0 \\ 0 & \zeta^{-1}\end{pmatrix}\begin{pmatrix}\zeta & 1 \\ 0 & \zeta^{-1}\end{pmatrix}=\begin{pmatrix} -1 & \zeta \\ 0& -1\end{pmatrix}.$$
		By \Cref{lemma_1_sec_3}, $\C^2$ contains all negative unipotent elements. Now assume that  $-1\notin k^{\times 2}$. Then $\C$ is a non-split semisimple conjugacy class. We claim that $U':=\begin{pmatrix} -1 & 1 \\ 0 & -1 \end{pmatrix}\notin \C^2$. On the contrary, assume that there exists $x,y\in \C$ such that $xy=U'$. Let $x=\x_{12}(\tau_1)\n(\alpha_1)\x_{12}(\psi_1)$ and $y=\x_{12}(\tau_2)\n(\alpha_2)\x_{12}(\psi_2)$. Note that since $xy\in B$, by \Cref{product_Bruhat_elements}, we must have $\psi_1+\tau_2=0$. Since $U'=\h(-1)\x_{12}(-1)$, by \Cref{product_Bruhat_elements}, we have $\alpha_1=\alpha_2$ and hence $\tau_1+\psi_2=-1$. Thus we conclude that $\tau_1+\psi_1+\tau_2+\psi_2=-1$. On the other hand as $x,y\in \C$, we have $\tr(x)=\tr(y)=0$, whence by \Cref{trace_Bruhat_elements}, we get that $\alpha_1^{-1}(\tau_1+\psi_1)=\alpha_2^{-1}(\tau_2+\psi_2)=0$. Thus, $\tau_1+\psi_1+\tau_2+\psi_2=0$, which is a contradiction. By \Cref{lemma_1_sec_3}, $\C^2$ does not contain any negative unipotent element. The proof is now complete.
	\end{proof}
	
	\noindent The next proposition describes the product of two distinct semisimple conjugacy classes in $\SL_2(k)$.
	
	\begin{proposition}\label{product_distinct_semisimple_class_SL}
	Let $k$ be a field with $u(k)\leq 2$ and let $G:=\SL_2(k)$. Let $\C_1$ and $\C_2$ be two distinct semisimple conjugacy classes in $G$.
		\begin{enumerate}
			\item If $\tr(\C_1)\neq -\tr(\C_2)$, then $\C_1\C_2=G\setminus \{\pm I\}$.
			\item If $\tr(\C_1)=-\tr(\C_2)$, then $\C_1\C_2=G\setminus \{I\}$ unless $\C_1$ and $\C_2$ both are non-split semisimple, in which case, we have
			$$\C_1\C_2=G\setminus \{\{I\}\cup\{negative\;unipotents\}\}.$$  
		\end{enumerate}
	\end{proposition}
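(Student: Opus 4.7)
The plan is to determine, for each element type in $\SL_2(k)$, whether it belongs to $\C_1\C_2$. Write $\alpha=\tr(\C_1)$, $\beta=\tr(\C_2)$; by \Cref{conjugacy_SL}(1), distinctness gives $\alpha\neq\beta$, and semisimplicity gives $\alpha,\beta\in k\setminus\{\pm 2\}$. All semisimple elements lie in $\C_1\C_2$ by \Cref{product_semisimple_classes_contain_semisimple}. For the central ones, reality of semisimple classes (\Cref{reality_properties_SL2}(1)) shows that $I\in\C_1\C_2$ would force $\C_2=\C_1^{-1}=\C_1$, impossible; and $-I=xy$ with $x\in\C_1$ iff $y=-x^{-1}$ lies in $\C_2$, which by \Cref{conjugacy_SL}(1) occurs precisely when $\beta=-\alpha$.

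For unipotents, I would apply \Cref{trace_result} to the triple $(\alpha,\beta,2)$ to obtain $A,B,C\in\SL_2(k)$ with $ABC=I$; then $AB=C^{-1}$ has trace $2$ and is not $I$ (otherwise $\alpha=\beta$), hence is unipotent. Since $\alpha,\beta\neq\pm 2$, \Cref{conjugacy_SL}(1) places $A\in\C_1,B\in\C_2$, so a unipotent lies in $\C_1\C_2$; by \Cref{lemma_1_sec_3} every unipotent does. In Case (1) the same argument with third trace $-2$ produces a negative unipotent in $\C_1\C_2$ (the obstruction $AB=-I$ would force $\beta=-\alpha$), giving $\C_1\C_2=G\setminus\{\pm I\}$. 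In Case (2) with at least one class, say $\C_1$, split, writing $\alpha=\mu+\mu^{-1}$ with $\mu\in k^\times\setminus\{\pm 1\}$, a direct computation
\[
\h(\mu)\begin{pmatrix}-\mu^{-1}&1\\0&-\mu\end{pmatrix}=\begin{pmatrix}-1&\mu\\0&-1\end{pmatrix}
\]
exhibits a negative unipotent in $\C_1\C_2$ (the second factor is non-central with trace $-\alpha=\beta$, hence lies in $\C_2$), and \Cref{lemma_1_sec_3} then places every negative unipotent in $\C_1\C_2$.

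The remaining and most delicate case is (2) with both $\C_1,\C_2$ non-split, where I must show no negative unipotent lies in $\C_1\C_2$. A non-split semisimple element has irreducible characteristic polynomial, so no eigenvector in $k^2$, and hence lies in $B\n B$. Suppose for contradiction that $xy=U':=\h(-1)\x_{12}(-1)$ with $x=\x_{12}(\tau_1)\n(\alpha_1)\x_{12}(\psi_1)\in\C_1$ and $y=\x_{12}(\tau_2)\n(\alpha_2)\x_{12}(\psi_2)\in\C_2$. By \Cref{product_Bruhat_elements}(3), $xy\in B$ forces $\psi_1+\tau_2=0$, after which matching the resulting Bruhat form with $U'$ gives $\alpha_1=\alpha_2$ and $\tau_1+\psi_2=-1$, so $\tau_1+\psi_1+\tau_2+\psi_2=-1$. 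On the other hand \Cref{trace_Bruhat_elements} yields
\[
\alpha+\beta=\tr(x)+\tr(y)=-\alpha_1^{-1}(\tau_1+\psi_1+\tau_2+\psi_2)=\alpha_1^{-1}\neq 0,
\]
contradicting $\alpha+\beta=0$ in Case (2). Thus $U'\notin\C_1\C_2$, and \Cref{lemma_1_sec_3} (contrapositive) rules out every negative unipotent, completing the proposition. The main obstacle is exactly this last Bruhat computation, which is a variant of the one in \Cref{square_semisimple_class_SL}.
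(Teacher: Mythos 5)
Your proof is correct and takes essentially the same route as the paper: \Cref{product_semisimple_classes_contain_semisimple} for semisimple elements, reality to exclude $I$, \Cref{trace_result} together with \Cref{lemma_1_sec_3} to sweep in all unipotents (and, in case (1), all negative unipotents), an explicit product for the split case, and the identical sharp-Bruhat-form computation to exclude negative unipotents when both classes are non-split. Your handling of $-I$ via $-I=x\cdot(-x^{-1})$ and your explicit split-case witness are marginally cleaner than the paper's displayed matrices, but the argument is the same.
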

	
	\begin{proof}
		By \Cref{product_semisimple_classes_contain_semisimple}, $\C_1\C_2$ contains all semisimple elements of $\SL_2(k)$.
		
		Now we consider elements of trace 2. Note that by \Cref{reality_properties_SL2}(1), $I\notin \C_1\C_2$. By \Cref{trace_result}, there is an element in $\C_1\C_2$ whose trace is 2. Such an element must be unipotent, and therefore by \Cref{lemma_1_sec_3}, $\C_1\C_2$ contains all unipotent elements of $\SL_2(k)$ . 
		
		Finally we consider elements of trace $-2$. It is easy to see that $-I\in \C_1\C_2$ implies that $\tr(\C_1)=-\tr(\C_2)$. In other words, if $\tr(\C_1)\neq -\tr(\C_2)$, then $-I\notin \C_1\C_2$. Since $\C_1\C_2$ contains an element of trace $-2$ by \Cref{trace_result}, we conclude that such an element must be a negative unipotent element. Hence $\C_1\C_2$ contains all negative unipotent elements by \Cref{lemma_1_sec_3}. Now assume that $\alpha=\tr(\C_1)=-\tr(\C_2)$. Note that $\alpha\neq 0$ as $\C_1\neq \C_2$. We have
		$$\begin{pmatrix}\frac{\alpha}{2} & \frac{\alpha}{2}-\frac{2}{\alpha} \\ \frac{\alpha}{2} & \frac{\alpha}{2} \end{pmatrix}\begin{pmatrix}-\frac{\alpha}{2} & \frac{\alpha}{2}-\frac{2}{\alpha} \\ \frac{\alpha}{2} & -\frac{\alpha}{2} \end{pmatrix}=-I.$$
		If $\C_1$ is the conjugacy class of $diag(\beta,\beta^{-1})$, where $\beta \in k^{\times}$, then $\C_2$ is necessarily split semisimple and $\C_2$ is the class of $diag(-\beta, -\beta^{-1})$. We have
		$$\begin{pmatrix}-\beta & 0 \\ 0 & -\beta^{-1}\end{pmatrix}\begin{pmatrix}-\beta^{-1} & 1 \\ 0 & -\beta\end{pmatrix}=\begin{pmatrix} -1 & -\beta \\ 0 & -1\end{pmatrix}.$$
		By \Cref{lemma_1_sec_3}, $\C_1\C_2$ contains all negative unipotent elements. Now assume that $\C_1$ is non-split semisimple. Then $\C_2$ is necessarily non-split semisimple. Hence $\C_1,\C_2\subseteq B\n B$. We claim that $V:=\begin{pmatrix} -1 & 1 \\ 0 & -1 \end{pmatrix}\notin \C_1\C_2$. On the contrary, assume that there exists $x\in \C_1$ and $y\in \C_2$ such that $xy=V$. Let $x=\x_{12}(\tau_1)\n(\alpha_1)\x_{12}(\psi_1)$ and $y=\x_{12}(\tau_2)\n(\alpha_2)\x_{12}(\psi_2)$. Note that since $xy\in B$, by \Cref{product_Bruhat_elements}, we must have $\psi_1+\tau_2=0$. Since $V=\h(-1)\x_{12}(-1)$, by \Cref{product_Bruhat_elements}, we have $\alpha_1=\alpha_2$ and hence $\tau_1+\psi_2=-1$. Thus we conclude that $\tau_1+\psi_1+\tau_2+\psi_2=-1$. On the other hand as $x,y\in \C$, $\tr(x)=-\tr(y)$, whence by \Cref{trace_Bruhat_elements}, we get that $-\alpha_1^{-1}(\tau_1+\psi_1)=\alpha_2^{-1}(\tau_2+\psi_2)$. As $\alpha_1=\alpha_2$, the last equality yields that  $\tau_1+\psi_1+\tau_2+\psi_2=0$, which is a contradiction. By \Cref{lemma_1_sec_3}, we conclude that $\C_1\C_2$ does not contain any negative unipotent element of $\SL_2(k)$. The proof is now complete.
	\end{proof}
	
	\begin{proof}[\textbf{Proof of \Cref{square_semisimple_class_PSL}}] Let $\C$ be the conjugacy class of $\bar{x}$, where $x$ is a semisimple element of $\SL_2(k)$. Let $\D_1$ denote the conjugacy class of $x$ in $\SL_2(k)$ and $\D_2$ denote the conjugacy class of $-x$ in $\SL_2(k)$. Note that $\C^2$ contains every element $\bar{g}$, where $g\in \SL_2(k)$ and there exists $y\in \bar{g}$ such that $y$ is in one of $\D_1^2$, $\D_2^2$, or $\D_1\D_2$. The proof now follows from \Cref{square_semisimple_class_SL} and \Cref{product_distinct_semisimple_class_SL}. 
	\end{proof}

	\begin{proof}[\textbf{Proof of \Cref{product_distinct_semisimple_class_PSL}}]
		By considerations similar to the previous proof, the proof follows from  \Cref{product_distinct_semisimple_class_SL}.
	\end{proof}
	
	\subsection{Product of two  unipotent/negative unipotent conjugacy classes in $\SL_2(k)$ and $\psl_2(k)$} 
	
	We describe the square of a unipotent (resp. negative unipotent) conjugacy class in $\SL_2(k)$ and the product of two distinct unipotent (resp. negative unipotent) conjugacy classes in $\SL_2(k)$. Due to \Cref{prep_lemma_2}, we have the following observation.
	
	\begin{obs}
		\label{general_element_in_product_of_distinct_unipotent_classes_upto_conjugacy}
		Up to conjugacy, a general element of $U_{\overline{\epsilon_1}}U_{\overline{\epsilon_2}}$ is of the form $$M(a,b,c,d):= 
		\begin{pmatrix} a & b \\ c & d \end{pmatrix} 
		\begin{pmatrix}1 & \epsilon_1 \\ 0 & 1\end{pmatrix}
		\begin{pmatrix} a & b \\ c & d \end{pmatrix}^{-1} 
		\begin{pmatrix}1 & \epsilon_2 \\ 0 & 1\end{pmatrix}= 
		\begin{pmatrix}1-\epsilon_1 ac & \epsilon_2 +\epsilon_1 a^2 -\epsilon_1 \epsilon_2 ac \\ -\epsilon_1 c^2 & 1+ \epsilon_1 ac-\epsilon_1 \epsilon_2 c^2\end{pmatrix};$$
		where $a,b,c,d\in k$ with $ad-bc=1$. Note that the trace of $M(a,b,c,d)$ equals $2-\epsilon_1 \epsilon_2 c^2$.
	\end{obs}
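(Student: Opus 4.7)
The plan is immediate from \Cref{prep_lemma_2}, followed by a direct $2\times 2$ matrix computation. Fixing the standard representatives $x = \begin{pmatrix} 1 & \epsilon_1 \\ 0 & 1 \end{pmatrix} \in U_{\overline{\epsilon_1}}$ and $y = \begin{pmatrix} 1 & \epsilon_2 \\ 0 & 1 \end{pmatrix} \in U_{\overline{\epsilon_2}}$ provided by \Cref{conjugacy_SL}(2), \Cref{prep_lemma_2} tells us that any element of $U_{\overline{\epsilon_1}} U_{\overline{\epsilon_2}}$ is conjugate to $g x g^{-1} y$ for some $g \in \SL_2(k)$. Writing $g = \begin{pmatrix} a & b \\ c & d \end{pmatrix}$ with $ad - bc = 1$ and $g^{-1} = \begin{pmatrix} d & -b \\ -c & a \end{pmatrix}$, the claim reduces to multiplying out these matrices and computing the trace of the result.

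Concretely, I would first compute $gxg^{-1}$, using $ad-bc=1$ to simplify the diagonal entries, and observe that the result
$$gxg^{-1} = \begin{pmatrix} 1-\epsilon_1 ac & \epsilon_1 a^2 \\ -\epsilon_1 c^2 & 1+\epsilon_1 ac \end{pmatrix}$$
depends only on $a, c, \epsilon_1$ (since $b$ and $d$ cancel out). Right-multiplying by $y$ is then a one-line calculation that produces the displayed formula for $M(a,b,c,d)$, and the trace is obtained by summing the $(1,1)$ and $(2,2)$ entries, which gives $2-\epsilon_1\epsilon_2 c^2$.

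There is no genuine obstacle: the argument is purely mechanical once \Cref{prep_lemma_2} has been invoked. The only conceptual point is that, although the conjugating element $g$ ranges over a four-parameter family, the resulting conjugate $gxg^{-1}$ is effectively controlled by the two parameters $a$ and $c$ (a reflection of the fact that the centralizer of $x$ in $\SL_2(k)$ is a two-dimensional unipotent-by-center subgroup), which is why the formula for $M(a,b,c,d)$ depends only on $a$, $c$, $\epsilon_1$, $\epsilon_2$.
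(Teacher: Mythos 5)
Your proposal is correct and follows exactly the paper's route: the paper states this observation as an immediate consequence of \Cref{prep_lemma_2} together with the same direct matrix computation, and your intermediate formula for $gxg^{-1}$, the product with the second unipotent, and the trace all agree with the displayed $M(a,b,c,d)$. The only quibble is the parenthetical aside at the end: the centralizer of $\begin{pmatrix}1 & \epsilon_1\\ 0 & 1\end{pmatrix}$ in $\SL_2(k)$ is $\{\pm I\}\cdot\{\x_{12}(t)\mid t\in k\}$, which is one-parameter up to the center rather than two-dimensional, though this does not affect the argument.
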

	
	\begin{lemma}\label{semisimple_in_product of_distinct_unipotents}
		Let $k$ be a field and let $\overline{\epsilon_1}$, $\overline{\epsilon_2}\in k^{\times}/k^{\times 2}$. 
		Let $\C$ be a semisimple conjugacy class in $\SL_2(k)$. 
		Then $\C \subseteq U_{\overline{\epsilon_1}}U_{\overline{\epsilon_2}}$ if and only if $\overline{2- \tr(\C)}=\overline{\epsilon_1 \epsilon_2} \in k^{\times}/k^{\times 2}$.
	\end{lemma}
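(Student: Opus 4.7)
The plan is to exploit the explicit parametric description of $U_{\overline{\epsilon_1}}U_{\overline{\epsilon_2}}$ given by \Cref{general_element_in_product_of_distinct_unipotent_classes_upto_conjugacy} together with the trace-classification of semisimple classes furnished by \Cref{conjugacy_SL}(1). Since both $\C$ and the product $U_{\overline{\epsilon_1}}U_{\overline{\epsilon_2}}$ are unions of $\SL_2(k)$-conjugacy classes, the containment $\C\subseteq U_{\overline{\epsilon_1}}U_{\overline{\epsilon_2}}$ is equivalent to $\C\cap U_{\overline{\epsilon_1}}U_{\overline{\epsilon_2}}\neq \emptyset$, and by \Cref{general_element_in_product_of_distinct_unipotent_classes_upto_conjugacy} the latter is in turn equivalent to the existence of $a,b,c,d\in k$ with $ad-bc=1$ and $M(a,b,c,d)\in \C$.

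For the forward direction, assume $\C\subseteq U_{\overline{\epsilon_1}}U_{\overline{\epsilon_2}}$ and, using \Cref{general_element_in_product_of_distinct_unipotent_classes_upto_conjugacy}, pick an element of $\C$ of the form $M(a,b,c,d)$. Reading off the trace from the observation gives $\tr(\C)=2-\epsilon_1\epsilon_2 c^2$. Since $\C$ is (non-central) semisimple, $\tr(\C)\neq 2$ by \Cref{table1}, so $c\in k^{\times}$ and $2-\tr(\C)=\epsilon_1\epsilon_2 c^2$ lies in $\epsilon_1\epsilon_2 k^{\times 2}$, i.e.\ $\overline{2-\tr(\C)}=\overline{\epsilon_1\epsilon_2}$ in $k^{\times}/k^{\times 2}$.

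For the converse, suppose $\overline{2-\tr(\C)}=\overline{\epsilon_1\epsilon_2}$; since $\tr(\C)\neq 2$, write $2-\tr(\C)=\epsilon_1\epsilon_2\mu^2$ for some $\mu\in k^{\times}$. Setting $a=d=1$, $b=0$ and $c=\mu$ (so $ad-bc=1$), the matrix $M(1,0,\mu,1)$ belongs to $U_{\overline{\epsilon_1}}U_{\overline{\epsilon_2}}$ by \Cref{general_element_in_product_of_distinct_unipotent_classes_upto_conjugacy} and has trace $2-\epsilon_1\epsilon_2\mu^2=\tr(\C)$. As $\tr(\C)\neq \pm 2$, this element is semisimple, and \Cref{conjugacy_SL}(1) then places it inside $\C$; conjugation-closure of $U_{\overline{\epsilon_1}}U_{\overline{\epsilon_2}}$ gives $\C\subseteq U_{\overline{\epsilon_1}}U_{\overline{\epsilon_2}}$.

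No substantial obstacle is anticipated. Once one keeps in mind that ``semisimple'' here means non-central semisimple, so that $\tr(\C)\neq \pm 2$, the argument is essentially a direct translation between the scalar equality $2-\tr(\C)=\epsilon_1\epsilon_2 c^2$ in $k^{\times}$ and the equality $\overline{2-\tr(\C)}=\overline{\epsilon_1\epsilon_2}$ of square classes. The only nontrivial input beyond bookkeeping is the appeal to \Cref{conjugacy_SL}(1) to realize $M(1,0,\mu,1)$ as an element of $\C$ purely from its trace, which is where the hypothesis $u(k)\leq 2$ is used.
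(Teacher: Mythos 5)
Your proof is correct and follows essentially the same route as the paper, whose proof is simply the remark that everything follows from the trace formula $\tr(M(a,b,c,d))=2-\epsilon_1\epsilon_2c^2$ in Observation \ref{general_element_in_product_of_distinct_unipotent_classes_upto_conjugacy}; you have merely written out the two directions explicitly, including the choice $a=d=1$, $b=0$, $c=\mu$ for the converse. Your closing remark correctly identifies that the converse direction invokes \Cref{conjugacy_SL}(1), and hence tacitly the hypothesis $u(k)\leq 2$, even though the lemma is stated for an arbitrary field — a point the paper glosses over but which is harmless, since the lemma is only applied under that hypothesis.
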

	
	\begin{proof}
		The proof follows from the expression of $\tr(M(a,b,c,d))$ computed in the previous observation.
	\end{proof}
	\begin{lemma}\label{unipotent_square_contains_itself}
		Let $k$ be a field with $|k|>5$. 
		Then $U_{\overline{\epsilon}}\subseteq U^2_{\overline{\epsilon}}$.
	\end{lemma}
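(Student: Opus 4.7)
The plan is to apply \Cref{general_element_in_product_of_distinct_unipotent_classes_upto_conjugacy} with $\epsilon_1=\epsilon_2=\epsilon$: every element of $U_{\overline{\epsilon}}^2$ is, up to $\SL_2(k)$-conjugacy, of the form $M(a,b,c,d)$ with $ad-bc=1$, and has trace $2-\epsilon^2c^2$. To land inside $U_{\overline{\epsilon}}$ I would specialize $c=0$ (this makes the trace equal to $2$ and kills the bottom-left entry), so $a\in k^{\times}$ and $d=a^{-1}$. Direct substitution collapses $M(a,b,0,a^{-1})$ to $\begin{pmatrix}1 & \epsilon(1+a^{2})\\ 0 & 1\end{pmatrix}$, and by \Cref{conjugacy_SL}(2) this matrix lies in $U_{\overline{\epsilon}}$ precisely when $1+a^{2}\in k^{\times 2}$.

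The problem therefore reduces to a purely field-theoretic question: does there exist $a\in k^{\times}$ with $1+a^{2}$ a nonzero square in $k$? I would parametrize the equation $1+a^{2}=b^{2}$ by writing it as $(b-a)(b+a)=1$ and setting $b-a=t^{-1}$, $b+a=t$ for $t\in k^{\times}$; this gives $a=(t-t^{-1})/2$ and $b=(t+t^{-1})/2$. The two requirements $a\neq 0$ and $b\neq 0$ translate to $t\neq\pm 1$ and $t^{2}\neq -1$, respectively, excluding at most four values of $t$ in $k^{\times}$ (with exactly two when $-1\notin k^{\times 2}$).

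The hypothesis $|k|>5$ then guarantees $|k^{\times}|\geq 6$ in the finite case (and is trivially satisfied in the infinite case), leaving at least one admissible $t$; the corresponding $a$ produces an explicit element of $U_{\overline{\epsilon}}^2$ that is $\SL_2(k)$-conjugate to $\begin{pmatrix}1 & \epsilon\\ 0 & 1\end{pmatrix}$, giving $U_{\overline{\epsilon}}\subseteq U_{\overline{\epsilon}}^{2}$. The only subtle point is the sharpness of the counting: over $\mathbb{F}_{5}$ one has $2^{2}=-1$, so the excluded set $\{\pm 1,\pm 2\}$ exhausts $\mathbb{F}_{5}^{\times}$, confirming that the hypothesis must be strict ($|k|>5$, not $|k|\geq 5$). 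This is the main (and essentially only) obstacle in the argument; everything else is direct substitution and the conjugacy criterion of \Cref{conjugacy_SL}(2).
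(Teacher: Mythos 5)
Your proposal is correct and follows essentially the same route as the paper: specialize the matrix $M(a,b,c,d)$ from \Cref{general_element_in_product_of_distinct_unipotent_classes_upto_conjugacy} to $c=0$, reduce to finding $a\in k^{\times}$ with $1+a^{2}\in k^{\times 2}$, and solve via the parametrization $a=(t-t^{-1})/2$, $b=(t+t^{-1})/2$ with $t\neq\pm1$, $t^{2}\neq-1$, which is exactly the paper's choice of $\alpha$ with $\alpha^{2}\notin\{1,-1\}$. Your added remark that the excluded set exhausts $\mathbb{F}_{5}^{\times}$ correctly explains why the hypothesis $|k|>5$ is sharp.
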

	
	\begin{proof}
		By \Cref{conjugacy_SL}(3), it is enough to show that $U_{\bar{1}}\subseteq U_{\bar{1}}^2$. By \Cref{general_element_in_product_of_distinct_unipotent_classes_upto_conjugacy}, it is enough to find  $a,b,c,d\in k$ with $ad-bc=1$ such that $M(a,b,c,d)\in U_{\bar{1}}$. If $M(a,b,c,d)\in \SL_2(k)$ is such a matrix, then $\tr(M(a,b,c,d))=2$. By the previous observation, we have $2-c^2=2$ which implies that $c=0$. Note that $c=0$ implies $a\neq 0$. Thus 
		$M(a,b,c,d)=\begin{pmatrix}1 & 1 +a^2\\ 0 & 1\end{pmatrix}$. 
		Hence, $M(a,b,c,d)\in U_{\bar{1}}$ if and only if 
		$1+a^2\in k^{\times 2}$ for some $a\in k^{\times}$. Now since we have $|k|>5$, we can choose an element $\alpha \in k^{\times}$ such that $\alpha^2 \notin \{1,-1\}$. For such an $\alpha$, consider $a=\frac{\alpha-\alpha^{-1}}{2}$ and $b=\frac{\alpha+\alpha^{-1}}{2}$ . Then $a,b\neq 0$ and $b^2=1+a^2$. Hence we conclude that $U_{\bar{1}} \subseteq U_{\bar{1}}^2$.
	\end{proof}
	
	\begin{lemma}\label{unipotent_square_contains_other_unipotents}
		Let $k$ be a field. Assume that $u(k)\leq 2$ or $-1\in k^{\times 2}$. Then $U_{\overline{\epsilon_1}}\subseteq U^2_{\overline{\epsilon}}$ for all ${\overline{\epsilon_1}}, {\overline{\epsilon}} \in k^{\times}/k^{\times 2}$ 
		with ${\overline{\epsilon_1}}\neq {\overline{\epsilon}}$.
	\end{lemma}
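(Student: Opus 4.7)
The plan is to specialize Observation~\ref{general_element_in_product_of_distinct_unipotent_classes_upto_conjugacy} with $\epsilon_1=\epsilon_2=\epsilon$: a general conjugate of an element of $U_{\overline{\epsilon}}^{2}$ has the form
\[
M(a,b,c,d)=\begin{pmatrix} 1-\epsilon ac & \epsilon+\epsilon a^{2}-\epsilon^{2}ac \\ -\epsilon c^{2} & 1+\epsilon ac-\epsilon^{2}c^{2} \end{pmatrix}, \qquad ad-bc=1,
\]
with trace $2-\epsilon^{2}c^{2}$. Since every element of $U_{\overline{\epsilon_1}}$ has trace $2$, I would impose $c=0$, which forces $a\in k^{\times}$ and $d=a^{-1}$, so that $M(a,b,0,a^{-1})=\bigl(\begin{smallmatrix}1 & \epsilon(1+a^{2}) \\ 0 & 1\end{smallmatrix}\bigr)$. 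By Proposition~\ref{conjugacy_SL}(2), this matrix lies in $U_{\overline{\epsilon_1}}$ precisely when $1+a^{2}\neq 0$ and $\epsilon(1+a^{2})\in\epsilon_1 k^{\times 2}$. Writing $\delta:=\epsilon\epsilon_1$ and using $\overline{\epsilon^{-1}}=\overline{\epsilon}$, this condition is equivalent to the existence of $a\in k$ and $s\in k^{\times}$ with $1+a^{2}=\delta s^{2}$. The assumption $\overline{\epsilon}\neq\overline{\epsilon_1}$ is exactly that $\overline{\delta}$ is a non-trivial class in $k^{\times}/k^{\times 2}$, so the whole problem reduces to solving $a^{2}-\delta s^{2}=-1$ with $s\neq 0$.

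I would then split on the two alternatives in the hypothesis. If $-1\notin k^{\times 2}$, then by assumption $u(k)\leq 2$, which makes every non-degenerate binary form over $k$ universal (the standard argument: $\langle 1,-\delta\rangle\perp\langle 1\rangle$ is isotropic, and the isotropic vector has a nonzero last coordinate because $\langle 1,-\delta\rangle$ is anisotropic). Hence $x^{2}-\delta y^{2}=-1$ for some $x,y\in k$; here $y=0$ is impossible since otherwise $x^{2}=-1$ would contradict $-1\notin k^{\times 2}$, so taking $a=x$, $s=y$ finishes this branch. If instead $-1\in k^{\times 2}$, say $-1=i^{2}$, then $u^{2}+v^{2}=(u+iv)(u-iv)$ is isotropic, hence a hyperbolic plane and therefore represents every element of $k^{\times}$; pick $u,v$ with $u^{2}+v^{2}=\delta$. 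One cannot have $u=0$, as that would give $\delta=v^{2}\in k^{\times 2}$, contradicting the non-triviality of $\overline{\delta}$. Setting $s=u^{-1}$, $a=vu^{-1}$ then yields $1+a^{2}=(u^{2}+v^{2})/u^{2}=\delta s^{2}$, as desired.

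Either branch exhibits an element of $U_{\overline{\epsilon_1}}$ lying in $U_{\overline{\epsilon}}^{2}$; since $U_{\overline{\epsilon}}^{2}$ is stable under $\SL_2(k)$-conjugation and $U_{\overline{\epsilon_1}}$ is a single $\SL_2(k)$-conjugacy class, this gives the full containment $U_{\overline{\epsilon_1}}\subseteq U_{\overline{\epsilon}}^{2}$. The only substantive step is locating a representation of $-1$ by $\langle 1,-\delta\rangle$ (or of $\delta$ by $\langle 1,1\rangle$) with a nonzero second coordinate, which is precisely where the disjunctive hypothesis "$u(k)\leq 2$ or $-1\in k^{\times 2}$" is used; everything else is a direct specialization of the earlier observation.
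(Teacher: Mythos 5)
Your argument is correct and follows essentially the same route as the paper: force $c=0$ via the trace computation from Observation~\ref{general_element_in_product_of_distinct_unipotent_classes_upto_conjugacy}, reduce to representing the nontrivial square class $\overline{\epsilon\epsilon_1}$ by the form $\langle 1,1\rangle$ with both coordinates nonzero, and settle that by splitting on whether $-1\in k^{\times 2}$ (isotropic binary form is universal) or $u(k)\leq 2$ (isotropy of the ternary form). The only cosmetic difference is that the paper first normalizes to $\overline{\epsilon}=\bar 1$ via Proposition~\ref{conjugacy_SL}(3) and writes explicit witnesses in the $-1\in k^{\times 2}$ case.
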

	
	\begin{proof}
		By \Cref{conjugacy_SL}(3), it is enough to show $U_{\overline{\epsilon_1}}\subseteq U_{\bar{1}}^2$, where $\epsilon_1\notin k^{{\times}2}$. Since $\tr(U_{\overline{\epsilon_1}})=2$, arguing as in the proof of Lemma \ref{unipotent_square_contains_itself}, we get that $U_{\overline{\epsilon_1}}\subseteq U^2_1$ if and only if $\epsilon_1$ is a sum of two elements in $k^{\times 2}$. 
		
		Suppose first that $-1\in k^{\times 2}$. 
		Let $\iota$ denote an element in $k^{\times}$ such that $\iota^2=-1$. 
		Consider any $\alpha \in k^{\times}$. 
		Then for $a=\frac{\alpha+\epsilon_1\alpha^{-1}}{2}$ and $b=\frac{\epsilon_1\alpha^{-1}-\alpha}{2\iota}$, 
		we get that $a,b\neq 0$ since otherwise, $\epsilon_1= (\iota \alpha)^2$ or $\epsilon_1= \alpha^2$, which is a contradiction. Also, $\epsilon_1=a^2+b^2$. 
		
		Now suppose that $k$ is a field with $u(k)\leq 2$. By previous paragraph, we need to consider only the case when $-1\notin k^{\times 2}$. Consider the equation $x^2+y^2=\epsilon_1 z^2$. By our assumption on the field, there exists a non-trivial solution $(x_0,y_0,z_0)$ for the above equation. Since $-1\notin k^{\times 2}$, we have $z_0\neq0$. Since $\epsilon_1 \notin k^{\times 2}$, we get that $x_0,y_0\neq 0$. Thus we conclude that $\epsilon_1$ is a sum of two elements in $k^{\times 2}$.
	\end{proof}
	
	\begin{obs}\label{general_element_of_negative_unipotent_class}
		A general element of the conjugacy class $U_{-1,\overline{\epsilon}}$ is of the following form:\\
		$N_{\overline{\epsilon}}(e,f,g,h):=
		\begin{pmatrix} e & f \\ g & h\end{pmatrix} 
		\begin{pmatrix} -1 & \epsilon \\ 0 & -1\end{pmatrix}
		\begin{pmatrix} e & f \\ g & h\end{pmatrix}^{-1}= 
		\begin{pmatrix} -1-\epsilon eg & \epsilon e^2 \\ -\epsilon g^2 & -1+\epsilon eg\end{pmatrix}$; where $e,f,g,h$ vary over elements in $k$ with $eh-fg=1$.  
	\end{obs}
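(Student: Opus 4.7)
The plan is to verify the stated formula by direct matrix multiplication. By Table~\ref{table1} together with \Cref{conjugacy_SL}(4), the conjugacy class $U_{-1,\overline{\epsilon}}$ is precisely the $\SL_2(k)$-orbit of the representative $\begin{pmatrix} -1 & \epsilon \\ 0 & -1 \end{pmatrix}$ under conjugation. Thus every element of $U_{-1,\overline{\epsilon}}$ can be written as $A \begin{pmatrix} -1 & \epsilon \\ 0 & -1 \end{pmatrix} A^{-1}$ for some $A = \begin{pmatrix} e & f \\ g & h \end{pmatrix} \in \SL_2(k)$ with $eh - fg = 1$, and the observation is simply the assertion that this conjugate has the specific form displayed.

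I would first use the standard inverse formula $A^{-1} = \begin{pmatrix} h & -f \\ -g & e \end{pmatrix}$, valid since $\det A = eh - fg = 1$, and then multiply out $A \begin{pmatrix} -1 & \epsilon \\ 0 & -1 \end{pmatrix} A^{-1}$ entry by entry. The diagonal entries collapse using the relation $eh - fg = 1$: the $(1,1)$ entry simplifies to $-eh - \epsilon eg + fg = -1 - \epsilon eg$, and the $(2,2)$ entry to $gf + \epsilon eg - he = -1 + \epsilon eg$. The off-diagonal entries simplify even more cleanly, yielding $\epsilon e^2$ in position $(1,2)$ and $-\epsilon g^2$ in position $(2,1)$, with the contributions involving $f$ and $h$ cancelling out directly.

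There is no real obstacle here, as the statement is purely computational. The only feature worth flagging is that the parameters $f$ and $h$ do not appear in the final expression; they enter only through the combination $eh - fg$, which the $\SL_2$ condition forces to equal $1$. Consequently $N_{\overline{\epsilon}}(e,f,g,h)$ depends effectively on $(\epsilon, e, g)$ alone, in exact parallel with the role played by $M(a,b,c,d)$ in \Cref{general_element_in_product_of_distinct_unipotent_classes_upto_conjugacy} for the unipotent case. This reduction to three effective parameters is precisely the feature that will be exploited in the subsequent analysis of products involving negative unipotent conjugacy classes (in particular, one reads off immediately that $\tr N_{\overline{\epsilon}}(e,f,g,h) = -2$, as it must).
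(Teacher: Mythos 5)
Your verification by direct matrix multiplication is correct and is exactly the (implicit) justification in the paper, which states this as an Observation without further proof; all four entries check out using $eh-fg=1$. Nothing further is needed.
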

	
	\begin{lemma}\label{trace_negative_two_in_unipotent_square}
		Let $k$ be a field. Then the followings hold:
		\begin{enumerate}
			\item $-I \notin U^2_{\overline{\epsilon}}$,
			
			\item $U_{-1,\overline{\epsilon_1}} \nsubseteq U^2_{\overline{\epsilon}}$ for all $\overline{\epsilon}, \overline{\epsilon_1} \in k^{\times}/k^{\times 2}$ with ${\overline{\epsilon_1}}\neq {\overline{\epsilon}}$, and
			\item  $U_{-1,\overline{\epsilon}} \subseteq U^2_{\overline{\epsilon}}$ for all $\overline{\epsilon} \in k^{\times}/k^{\times 2}$.
		\end{enumerate}
	\end{lemma}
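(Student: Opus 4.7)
The plan is to leverage the two normal forms already in hand: any element of $U_{\bar{\epsilon}}^2$ is $\SL_2(k)$-conjugate to some $M(a,b,c,d)$ from \Cref{general_element_in_product_of_distinct_unipotent_classes_upto_conjugacy} (specialized to $\epsilon_1=\epsilon_2=\epsilon$), while any element of $U_{-1,\overline{\epsilon_1}}$ has the form $N_{\overline{\epsilon_1}}(e,f,g,h)$ from \Cref{general_element_of_negative_unipotent_class}. Since $U_{\bar{\epsilon}}^2$ is $\SL_2(k)$-conjugation invariant and each $U_{-1,\overline{\epsilon_1}}$ is a single $\SL_2(k)$-conjugacy class by \Cref{conjugacy_SL}, every one of the three assertions reduces to a direct comparison of these parametrizations.

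For (1), since $-I$ is central, the hypothesis $-I\in U_{\bar{\epsilon}}^2$ would force an actual equality $-I=M(a,b,c,d)$ for some $(a,b,c,d)$ with $ad-bc=1$; the $(2,1)$-entry $-\epsilon c^2$ must then vanish, forcing $c=0$, but this yields $\tr M=2\neq -2$, a contradiction. For (2), if $U_{\bar{\epsilon}}^2\cap U_{-1,\overline{\epsilon_1}}\neq\emptyset$, then by the invariance noted above some $M(a,b,c,d)$ itself lies in $U_{-1,\overline{\epsilon_1}}$, i.e.\ there exist $(e,f,g,h)$ with $eh-fg=1$ such that $M(a,b,c,d)=N_{\overline{\epsilon_1}}(e,f,g,h)$. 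Comparing the $(2,1)$-entries gives $\epsilon c^2=\epsilon_1 g^2$; the trace condition $\tr M=2-\epsilon^2 c^2=-2$ forces $c\neq 0$ and hence $g\neq 0$, so $\overline{\epsilon_1}=\bar{\epsilon}$, contradicting the hypothesis $\overline{\epsilon_1}\neq \bar{\epsilon}$.

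For (3), it suffices to exhibit one element of $U_{-1,\bar{\epsilon}}$ lying inside $U_{\bar{\epsilon}}^2$, since $U_{\bar{\epsilon}}^2$ is conjugation-closed and $U_{-1,\bar{\epsilon}}$ is a single conjugacy class. Taking $a=1$, $b=0$, $c=2/\epsilon$, $d=1$ (so that $ad-bc=1$) yields
\[
M(1,0,2/\epsilon,1)=\begin{pmatrix}-1 & 0\\ -4/\epsilon & -1\end{pmatrix},
\]
which has trace $-2$, determinant $1$, and is not equal to $-I$; it is therefore negative unipotent, and by the analysis in (2) its square class is forced to be $\bar{\epsilon}$. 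The main technical point throughout is the square-class obstruction obtained by comparing the $(2,1)$-entries of $M$ and $N$: it simultaneously forbids $\overline{\epsilon_1}\neq\bar{\epsilon}$ in (2) and identifies the class in (3).
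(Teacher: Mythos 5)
Your argument is correct. For parts (1) and (2) it is essentially the paper's own proof: reduce every element of $U_{\bar{\epsilon}}^{2}$ to the normal form $M(a,b,c,d)$ of \Cref{general_element_in_product_of_distinct_unipotent_classes_upto_conjugacy}, read off the trace $2-\epsilon^{2}c^{2}$, and compare the $(2,1)$-entries of $M$ and of $N_{\overline{\epsilon_1}}(e,f,g,h)$ to extract the square-class obstruction $\epsilon c^{2}=\epsilon_{1}g^{2}$; the only cosmetic difference is that the paper first normalizes to $\epsilon=1$ via \Cref{conjugacy_SL}(3), which you skip. The genuinely different step is part (3): the paper argues by elimination, invoking Macbeath's theorem (\Cref{trace_result}) to guarantee that $U_{\bar{1}}^{2}$ contains \emph{some} conjugacy class of trace $-2$, and then uses (1) and (2) to force that class to be $U_{-1,\bar{1}}$; you instead produce the explicit witness $M(1,0,2/\epsilon,1)=\left(\begin{smallmatrix}-1&0\\-4/\epsilon&-1\end{smallmatrix}\right)$, verify it is negative unipotent, and identify its class as $\overline{\epsilon}$ by the same $(2,1)$-entry comparison. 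Your route buys two things: it is self-contained (no appeal to the existence theorem for prescribed traces), and it is slightly more general, since \Cref{trace_result} as quoted requires $u(k)\leq 2$ while your computation needs only $\mathsf{char}(k)\neq 2$ and $\epsilon\in k^{\times}$ --- which actually matches the lemma's bare hypothesis ``let $k$ be a field'' better than the paper's own proof of item (3) does. The trade-off is that the paper's elimination argument generalizes more readily to situations where an explicit representative is awkward to write down, but here the explicit matrix is simple enough that nothing is lost.
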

	
	\begin{proof}
		By \Cref{conjugacy_SL}(3), it is enough to prove the result for $U_{\bar{1}}^2$. Let $\C$ be a conjugacy class in $\SL_2(k)$ such that $\tr(\C)=-2$. By \Cref{general_element_in_product_of_distinct_unipotent_classes_upto_conjugacy},  $\C\subseteq U_{\bar{1}}^2$ if and only if there exists 
		$a,b,c,d\in k$ with $ad-bc=1$ such that $M(a,b,c,d)$ belongs to $\C$. Suppose we have such a matrix $M(a,b,c,d)\in \SL_2(k)$. Since we require $\tr(M(a,b,c,d))=-2$, we must have $2-c^2=-2$, which implies that 
		$c=\pm 2$. Thus we have two possibilities for the matrix $M(a,b,c,d)$:
		$$M_1:=\begin{pmatrix}1-2a & (a-1)^2 \\ -4 & 2a-3\end{pmatrix},\;\text{or}\; M_2:=\begin{pmatrix}1+2a & (1+a)^2 \\ -4 & -2a-3\end{pmatrix}.$$ 
		It is immediate that $-I \notin U_{\bar{1}}^2$. This proves $(1)$.
		
		By Observation \ref{general_element_of_negative_unipotent_class}, for 
		$\overline{\epsilon_1}\in k^{\times}/k^{\times 2}$, we  have that $U_{-1,\overline{\epsilon_1}}\subseteq U^2_{\bar{1}}$ if and only if 
		there exists $e,f,g,h \in k$ with $eh-fg=1$ such that $N_{\overline{\epsilon_1}}(e,f,g,h)=M_1$ or $N_{\overline{\epsilon_1}}(e,f,g,h)=M_2$. 
		Suppose that $\epsilon_1\notin k^{\times 2}$ and $U_{-1,\overline{\epsilon_1}}\subseteq U^2_{\bar{1}}$. 
		Then comparing the $(2,1)$ entries of the matrices $M_1$ and $M_2$ with $N_{\overline{\epsilon_1}}(e,f,g,h)$, we get that $\epsilon_1 g^2=4\in k^{\times 2}$, which is a contradiction. This proves $(2)$. 
		Since there exists at least one conjugacy class $\C$ in $U_{\bar{1}}^2$ with $\tr(\C)=-2$ (see \Cref{trace_result}), we conclude $U_{-1,\bar{1}}\subseteq U_{\bar{1}}^2$ and (3) follows.
	\end{proof}
	
	Combining \Cref{reality_properties_SL2}(2), \Cref{semisimple_in_product of_distinct_unipotents}, \Cref{unipotent_square_contains_itself}, \Cref{unipotent_square_contains_other_unipotents}, and \Cref{trace_negative_two_in_unipotent_square}, we can now give a description of the square of a unipotent conjugacy class in $\SL_2(k)$. 
	\begin{proposition}\label{unipotent_square_description}
		Let $k$ be a field with $u(k)\leq 2$ and $|k|>5$. Let $\C$ be a conjugacy class in $\SL_2(k)$. Then, for $\overline{\epsilon}\in k^{\times}/k^{\times 2}$, $\C\subseteq U_{\overline{\epsilon}}^2$ if and only if 
		\begin{enumerate}
			\item $\C$ is a semisimple conjugacy class with $2-\tr(\C)\in k^{\times 2}$.
			\item $\C$ is a unipotent conjugacy class.
			\item $\C=U_{-1,\overline{\epsilon}}$.
			\item $\C=\{I\}$ provided $-1\in k^{\times 2}$.
		\end{enumerate}
	\end{proposition}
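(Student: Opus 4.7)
The plan is to partition the conjugacy classes of $\SL_2(k)$ according to the value of the trace and then invoke the preparatory lemmas case by case. Since $\tr$ is constant on conjugacy classes, and since the central classes have trace $\pm 2$, the unipotent classes have trace $2$, the negative unipotent classes have trace $-2$, and every other class is semisimple, three cases exhaust the possibilities: $\tr(\C)\neq\pm 2$, $\tr(\C)=2$, and $\tr(\C)=-2$.

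First I would handle the semisimple case $\tr(\C)\neq\pm 2$. Here I apply \Cref{semisimple_in_product of_distinct_unipotents} with $\epsilon_1=\epsilon_2=\epsilon$; since $\overline{\epsilon_1\epsilon_2}=\overline{\epsilon^2}=\overline{1}$ in $k^{\times}/k^{\times 2}$, that lemma gives $\C\subseteq U_{\overline{\epsilon}}^2$ if and only if $2-\tr(\C)\in k^{\times 2}$, which is exactly case (1). Next I would treat $\tr(\C)=2$, splitting into $\C$ unipotent and $\C=\{I\}$. For a unipotent class $\C=U_{\overline{\epsilon_1}}$, either $\overline{\epsilon_1}=\overline{\epsilon}$, in which case \Cref{unipotent_square_contains_itself} (which is where the hypothesis $|k|>5$ is used) gives $U_{\overline{\epsilon}}\subseteq U_{\overline{\epsilon}}^2$, or $\overline{\epsilon_1}\neq\overline{\epsilon}$, in which case \Cref{unipotent_square_contains_other_unipotents} (which uses $u(k)\leq 2$) gives $U_{\overline{\epsilon_1}}\subseteq U_{\overline{\epsilon}}^2$; this establishes case (2). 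For $\C=\{I\}$, I would observe that $I\in U_{\overline{\epsilon}}^2$ if and only if some $x\in U_{\overline{\epsilon}}$ has its inverse again in $U_{\overline{\epsilon}}$; by \Cref{reality_properties_SL2}(2) this translates to $U_{\overline{\epsilon}}=U_{\overline{-\epsilon}}$, which by \Cref{conjugacy_SL}(2) is equivalent to $\overline{\epsilon}=\overline{-\epsilon}$, i.e.\ to $-1\in k^{\times 2}$, yielding case (4).

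Finally, for $\tr(\C)=-2$, i.e.\ $\C=\{-I\}$ or $\C$ a negative unipotent class, all three assertions follow directly from \Cref{trace_negative_two_in_unipotent_square}: part (1) rules out $-I$, part (2) rules out every negative unipotent class except $U_{-1,\overline{\epsilon}}$, and part (3) puts $U_{-1,\overline{\epsilon}}$ into $U_{\overline{\epsilon}}^2$, which is exactly case (3). Assembling these three cases completes the proof.

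I do not expect any genuine obstacle at this stage: the entire technical content lives in the preparatory lemmas. The only point that demands a little care is ensuring that my case analysis is exhaustive and that each class is placed in exactly one of the four listed buckets, so that I can conclude both the ``if'' and the ``only if'' directions from the lemmas cited.
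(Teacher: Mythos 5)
Your proposal is correct and follows essentially the same route as the paper: the paper's own proof is precisely the observation that the proposition follows by combining the reality lemma for unipotent classes, the trace criterion for semisimple classes in a product of two unipotent classes, the two lemmas placing unipotent classes inside $U_{\overline{\epsilon}}^2$, and the lemma handling elements of trace $-2$. Your case analysis by trace, including the use of $|k|>5$ only for $U_{\overline{\epsilon}}\subseteq U_{\overline{\epsilon}}^2$ and the identification of $I\in U_{\overline{\epsilon}}^2$ with $-1\in k^{\times 2}$, matches the intended argument exactly.
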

	
	Now we describe the product of two distinct unipotent classes in $\SL_2(k)$.
	\begin{lemma}\label{field_lemma_1}
		Let $k$ be a field with $u(k)\leq 2$. Let $\epsilon, \epsilon_1, \epsilon_2 \in k^{\times}$ be such that $\overline{\epsilon}, \overline{\epsilon_1}, \overline{\epsilon_2}$ are distinct elements of $k^{\times}/k^{\times 2}$, and $\overline{\epsilon_2}\neq \overline{-\epsilon_1}$. Then there exists $a\in k^{\times}$ such that $\overline{\epsilon_2+\epsilon_1 a^2}=\overline{\epsilon} \in k^{\times}/k^{\times 2}$. 
	\end{lemma}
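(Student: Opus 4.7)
The plan is to recast the problem as an isotropy statement for a ternary quadratic form, and then use the $u$-invariant hypothesis together with the three distinctness assumptions to control where the zero can lie.

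First I would reformulate: finding $a \in k^\times$ with $\overline{\epsilon_2 + \epsilon_1 a^2} = \overline{\epsilon}$ amounts to finding $a, b \in k^\times$ and $c \in k^\times$ (which we will take to be $1$, after scaling) satisfying $\epsilon_1 a^2 - \epsilon b^2 + \epsilon_2 c^2 = 0$. Consider therefore the ternary quadratic form
\[
q(X, Y, Z) := \epsilon_1 X^2 - \epsilon Y^2 + \epsilon_2 Z^2.
\]
Since $u(k) \le 2$, the form $q$ is isotropic, so there is a nontrivial triple $(x_0, y_0, z_0) \in k^3 \setminus \{0\}$ with $q(x_0, y_0, z_0) = 0$.

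Next, I would argue that \emph{every} nontrivial zero of $q$ has all three coordinates nonzero, using the three distinctness hypotheses. If $z_0 = 0$, then $\epsilon_1 x_0^2 = \epsilon y_0^2$ with $(x_0, y_0) \ne (0, 0)$; if one of $x_0, y_0$ were zero, the other would be too, contradiction, so $x_0, y_0 \in k^\times$ and then $\overline{\epsilon_1} = \overline{\epsilon}$, contradicting the first distinctness hypothesis. Symmetrically, $x_0 = 0$ forces $\overline{\epsilon_2} = \overline{\epsilon}$, a contradiction. Finally, $y_0 = 0$ gives $\epsilon_1 x_0^2 = -\epsilon_2 z_0^2$ with both nonzero, hence $\overline{\epsilon_2} = \overline{-\epsilon_1}$, which is excluded by hypothesis.

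Having shown $x_0, y_0, z_0 \in k^\times$, I would finish by setting $a := x_0/z_0 \in k^\times$. Then
\[
\epsilon_2 + \epsilon_1 a^2 = \frac{1}{z_0^2}\bigl(\epsilon_2 z_0^2 + \epsilon_1 x_0^2\bigr) = \frac{\epsilon y_0^2}{z_0^2} = \epsilon \left(\tfrac{y_0}{z_0}\right)^2 \in \epsilon k^{\times 2},
\]
as required. The only conceptual work is bookkeeping on the three vanishing cases; there is no real obstacle once the problem is phrased as isotropy of the ternary form $q$ over a field of $u$-invariant at most two.
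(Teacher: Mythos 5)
Your proof is correct and follows essentially the same route as the paper: both reduce the statement to isotropy of the ternary form $\epsilon_1 x^2+\epsilon_2 y^2-\epsilon z^2$ (yours is the same form with variables relabelled), invoke $u(k)\leq 2$, and use the distinctness hypotheses to rule out any vanishing coordinate. The only difference is that you spell out the three vanishing cases which the paper leaves as a one-line assertion.
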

	
	\begin{proof}
		Consider the quadratic form $Q(x,y,z)=\epsilon_1 x^2+ \epsilon_2 y^2-\epsilon z^2$ over $k$. Since $u(k)\leq 2$, there exists $x_0,y_0,z_0\in k$, $(x_0,y_0,z_0)\neq(0,0,0)$ such that $Q(x_0,y_0,z_0)=0$. Since $\overline{\epsilon}, \overline{\epsilon_1}, \overline{\epsilon_2}$ are distinct classes in $k^{\times}/k^{\times 2}$ and $\overline{\epsilon_2}\neq \overline{-\epsilon_1}$, we conclude that $x_0,y_0,z_0\neq 0$. Thus $\frac{x_0}{y_0}\neq 0$, and $\overline{\epsilon_2+\epsilon_1 (\frac{x_0}{y_0})^2}=\overline{\epsilon}$. 
	\end{proof}
	
	\begin{lemma}\label{field_lemma_2}
		Let $k$ be a field. Then, for any $\epsilon \in k^{\times}$ such that $\overline{\epsilon}\neq \overline{\pm 1}$, there exists $b,c\in k^{\times}$ such that $\epsilon = b^2-c^2$. 
	\end{lemma}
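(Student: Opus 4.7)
The plan is to exhibit $\epsilon$ explicitly as a difference of two nonzero squares, using the trivial factorization $\epsilon = 1 \cdot \epsilon$ combined with the difference-of-squares identity. Since $\mathsf{char}(k)\neq 2$ under the standing conventions, one can form
\[
b := \frac{1+\epsilon}{2}, \qquad c := \frac{\epsilon-1}{2},
\]
and a direct expansion gives $b^2 - c^2 = \tfrac{1}{4}\bigl((1+\epsilon)^2 - (\epsilon-1)^2\bigr) = \epsilon$. So the identity itself requires no work; the only content of the lemma is verifying that the two squares are nonzero.

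That verification is where the hypothesis $\overline{\epsilon}\neq \overline{\pm 1}$ enters. I would argue contrapositively: $b = 0$ forces $\epsilon = -1$, and then $\overline{\epsilon} = \overline{-1}$ in $k^{\times}/k^{\times 2}$; similarly $c = 0$ forces $\epsilon = 1$ and hence $\overline{\epsilon} = \overline{1}$. Both possibilities are excluded by assumption, so $b, c \in k^{\times}$ and we are done. There is no genuine obstacle here, since the proof collapses to an algebraic identity plus a two-line case check.

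If one wanted a proof that manifestly uses only that $\epsilon$ and $-\epsilon$ are non-squares (rather than the stronger $\epsilon \neq \pm 1$), the same argument generalizes: for any $t \in k^{\times}$ set $b_t := \tfrac{1}{2}(t + \epsilon/t)$ and $c_t := \tfrac{1}{2}(\epsilon/t - t)$, so that $b_t^2 - c_t^2 = \epsilon$, with $b_t = 0 \iff t^2 = -\epsilon$ and $c_t = 0 \iff t^2 = \epsilon$; both are ruled out whenever $\overline{\epsilon}\neq \overline{\pm 1}$, so every $t \in k^\times$ produces a valid pair. The choice $t=1$ in the main argument is simply the most economical.
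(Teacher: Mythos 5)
Your proof is correct and coincides with the paper's own argument: the paper also takes $b=\frac{1+\epsilon}{2}$ and $c=\frac{1-\epsilon}{2}$ (the sign of $c$ is immaterial) and observes that $\overline{\epsilon}\neq\overline{\pm 1}$ forces $\epsilon\neq\pm 1$, hence $b,c\neq 0$. Your parametrized variant with $t$ is a nice remark but adds nothing needed for the lemma.
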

	
	\begin{proof}
		Since $\overline{\epsilon} \neq \overline{\pm 1}$, we have $\epsilon \neq \pm 1$. 
		Let $b=\frac{1+\epsilon}{2}$ and $c=\frac{1-\epsilon}{2}$. Then 
		$b,c\neq 0$ and $b^2-c^2=\epsilon$. 
	\end{proof}

	\begin{lemma}\label{unipotents_in_product_of_distinct_unipotents}
		Let $k$ be a field with $u(k)\leq 2$. 
		Let $\overline{\epsilon_1}$, $\overline{\epsilon_2}$ be two distinct elements of  $k^{\times}/k^{\times 2}$. Then the product $U_{\overline{\epsilon_1}}U_{ \overline{\epsilon_2}}$ contains all unipotent elements of $\SL_2(k)$.
	\end{lemma}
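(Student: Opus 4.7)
The plan is to pin down the unipotent elements in $U_{\overline{\epsilon_1}}U_{\overline{\epsilon_2}}$ by exploiting \Cref{general_element_in_product_of_distinct_unipotent_classes_upto_conjugacy}: via \Cref{prep_lemma_2}, every element of the product is conjugate to $M(a,b,c,d)$ with $ad-bc=1$, and its trace equals $2-\epsilon_1\epsilon_2 c^2$. Since a unipotent element has trace $2$, I would immediately set $c=0$, which together with $ad=1$ forces $a\in k^{\times}$ and reduces $M(a,b,0,d)$ to $\bigl(\begin{smallmatrix}1 & \epsilon_2+\epsilon_1 a^2\\ 0 & 1\end{smallmatrix}\bigr)$. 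By \Cref{conjugacy_SL}(2), the problem then boils down to a purely field-theoretic statement: for each $\overline{\epsilon}\in k^{\times}/k^{\times 2}$, there exists $a\in k^{\times}$ with $\epsilon_2+\epsilon_1 a^2\ne 0$ and $\overline{\epsilon_2+\epsilon_1 a^2}=\overline{\epsilon}$ in $k^{\times}/k^{\times 2}$.

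I would then split this verification into cases according to the position of $\overline{\epsilon}$ relative to $\overline{\epsilon_1}, \overline{\epsilon_2}$, and according to whether $\overline{\epsilon_2}=\overline{-\epsilon_1}$. Noting first that $U_{\overline{\epsilon_1}}U_{\overline{\epsilon_2}}=U_{\overline{\epsilon_2}}U_{\overline{\epsilon_1}}$ (since $XY$ and $YX$ are always conjugate, and both product sets are unions of conjugacy classes), I can swap $\epsilon_1$ and $\epsilon_2$ freely. When $\overline{\epsilon_2}\ne\overline{-\epsilon_1}$ and $\overline{\epsilon}\notin\{\overline{\epsilon_1},\overline{\epsilon_2}\}$, \Cref{field_lemma_1} applies directly. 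When $\overline{\epsilon_2}\ne\overline{-\epsilon_1}$ but $\overline{\epsilon}=\overline{\epsilon_1}$ (the case $\overline{\epsilon}=\overline{\epsilon_2}$ being symmetric), the equation rearranges to $\epsilon_1^{-1}\epsilon_2=b^2-a^2$; the hypotheses $\overline{\epsilon_1}\ne\overline{\epsilon_2}$ and $\overline{\epsilon_2}\ne\overline{-\epsilon_1}$ guarantee $\overline{\epsilon_1^{-1}\epsilon_2}\notin\{\overline{\pm 1}\}$, so \Cref{field_lemma_2} supplies the required $a,b\in k^{\times}$.

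The main obstacle is the degenerate case $\overline{\epsilon_2}=\overline{-\epsilon_1}$, where the binary form $\epsilon_1 x^2+\epsilon_2 y^2$ is isotropic and \Cref{field_lemma_1} is not directly applicable. This case forces $-1\notin k^{\times 2}$ (otherwise $\overline{\epsilon_1}=\overline{\epsilon_2}$, a contradiction). Writing $\epsilon_2=-\epsilon_1 s^2$ with $s\in k^{\times}$, I would dispose of the subcase $\overline{\epsilon}\in\{\overline{\epsilon_1},\overline{\epsilon_2}\}$ using the classical parameterization $a=s(t^2+1)/(2t)$ and $b=s(t^2-1)/(2t)$, which satisfies $a^2-b^2=s^2$; since $-1\notin k^{\times 2}$ the constraint $t^2\ne -1$ is vacuous, leaving the requirement $t\in k^{\times}\setminus\{\pm 1\}$, which is satisfied because $|k|\geq 5$. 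For $\overline{\epsilon}\notin\{\overline{\epsilon_1},\overline{\epsilon_2}\}$, set $\mu=\epsilon/\epsilon_1$ (so $\overline{\mu}\ne\overline{\pm 1}$); after rescaling, the equation becomes the conic $A^2-\mu B^2=1$, and parameterizing through the rational point $(1,0)$ by lines $A-1=tB$ yields infinitely many rational points with both coordinates in $k^{\times}$ (the denominator $t^2-\mu$ never vanishes since $\mu\notin k^{\times 2}$, and $A\ne 0$ follows from $-\mu\notin k^{\times 2}$). Rescaling by $s$ then gives the desired $a$, completing all cases.
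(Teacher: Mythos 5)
Your reduction is exactly the paper's: via \Cref{prep_lemma_2} and \Cref{general_element_in_product_of_distinct_unipotent_classes_upto_conjugacy} the trace condition forces $c=0$, and everything becomes the question of which square classes $\overline{\epsilon_2+\epsilon_1a^2}$ can hit; the main subcase ($\overline{\epsilon_2}\neq\overline{-\epsilon_1}$, $\overline{\epsilon}\notin\{\overline{\epsilon_1},\overline{\epsilon_2}\}$) is handled by \Cref{field_lemma_1} in both proofs, and your verification of the side conditions ($a,b\neq 0$, $\epsilon_2+\epsilon_1a^2\neq 0$, the commutativity $U_{\overline{\epsilon_1}}U_{\overline{\epsilon_2}}=U_{\overline{\epsilon_2}}U_{\overline{\epsilon_1}}$) is correct. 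Where you diverge is in dispatching the remaining subcases: the paper repeatedly converts them into statements about squares of unipotent classes via $U_{\overline{\epsilon}}^{-1}=U_{\overline{-\epsilon}}$ (e.g.\ $U_{\overline{\epsilon_1}}\subseteq U_{\overline{\epsilon_1}}U_{\overline{\epsilon_2}}\Leftrightarrow U_{\overline{\epsilon_2}}\subseteq U_{\overline{\epsilon_1}}U_{\overline{-\epsilon_1}}$) and then quotes \Cref{unipotent_square_contains_itself} and \Cref{unipotent_square_contains_other_unipotents}, whereas you solve the relevant norm equations directly — \Cref{field_lemma_2} for $\overline{\epsilon}=\overline{\epsilon_1}$ in the non-degenerate case, and explicit rational parameterizations of $a^2-b^2=s^2$ and of the conic $A^2-\mu B^2=1$ in the degenerate case $\overline{\epsilon_2}=\overline{-\epsilon_1}$. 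Your route is self-contained and, in particular, does not inherit the $|k|>5$ hypothesis of \Cref{unipotent_square_contains_itself} (a moot point here, since $\overline{\epsilon_2}=\overline{-\epsilon_1}$ with $\overline{\epsilon_1}\neq\overline{\epsilon_2}$ forces $-1\notin k^{\times 2}$ and hence $k\neq\F_5$, but it makes the logical dependencies cleaner); the paper's route is shorter on the page because it recycles lemmas already proved.
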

	
	\begin{proof} 
		Assume that $\overline{\epsilon_2}=\overline{-\epsilon_1}$. In this case, $-1\notin k^{\times 2}$ since $\overline{\epsilon_1}\neq \overline{\epsilon_2}$. Using \Cref{reality_properties_SL2}(2), note that $U_{\overline{\epsilon_1}}\subseteq U_{\overline{\epsilon_1}} U_{\overline{-\epsilon_1}} \Leftrightarrow U_{\overline{\epsilon_1}}\subseteq U_{\overline{\epsilon_1}}^2$, where the latter holds by \Cref{unipotent_square_contains_itself}. Similarly, we get that $U_{\overline{-\epsilon_1}}\subseteq U_{\overline{\epsilon_1}} U_{\overline{-\epsilon_1}}.$ Consider $\overline{\epsilon_3}\neq \overline{\pm \epsilon_1}$. Then, $U_{\overline{\epsilon_3}}\subseteq U_{\overline{\epsilon_1}} U_{\overline{-\epsilon_1}}$ if and only if there exists $a,b,c,d\in k$ with $ad-bc=1$ and $M(a,b,c,d)\in U_{\overline{\epsilon_3}}$. By \Cref{general_element_in_product_of_distinct_unipotent_classes_upto_conjugacy}, if such $M(a,b,c,d)$ exists, then $\tr(M(a,b,c,d))=2+\epsilon_1^2c^2=2$. This implies $c=0$. Thus we get $M(a,b,c,d)=\begin{pmatrix}1 & \epsilon_1 (a^2-1)\\ 0 & 1\end{pmatrix}$. Then, $M(a,b,c,d)\in U_{\overline{\epsilon_3}}$ if and only if there exists $b,c\in k^{\times}$ such that $\epsilon_1^{-1}\epsilon_3=b^2-c^2$. This holds by \Cref{field_lemma_2} as $\overline{\epsilon_1^{-1}\epsilon_3}\neq \overline{\pm 1}$. We conclude that all unipotent elements of $\SL_2(k)$ are contained in $U_{\overline{\epsilon_1}}U_{\overline{-\epsilon_1}}$.
		
		Let us now assume that $\overline{\epsilon_2}\neq \overline{-\epsilon_1}$. Let $\overline{\epsilon}\neq \overline{\epsilon_1},\overline{\epsilon_2}$. Using \Cref{general_element_in_product_of_distinct_unipotent_classes_upto_conjugacy}, and arguing as in the previous paragraph, we get that $U_{\overline{\epsilon}}\subseteq U_{\overline{\epsilon_1}}U_{\overline{\epsilon_2}}$ if and only if there exists $a\in k^{\times}$ such that $\overline{\epsilon_2+\epsilon_1 a^2}=\overline{\epsilon}.$ This is true by \Cref{field_lemma_1}, whence it follows that $U_{\overline{\epsilon}}\subseteq U_{\overline{\epsilon_1}}U_{\overline{\epsilon_2}}$. Further, $U_{\overline{\epsilon_1}}\subseteq U_{\overline{\epsilon_1}}U_{\overline{\epsilon_2}}\Leftrightarrow U_{\overline{\epsilon_2}}\subseteq U_{\overline{\epsilon_1}}U_{\overline{-\epsilon_1}}$. The latter holds true by \Cref{unipotent_square_contains_other_unipotents} when $-1\in k^{\times 2}$ and by the previous paragraph when $-1\notin k^{\times 2}$. Similarly we conclude that $U_{\overline{\epsilon_2}}\subseteq U_{\overline{\epsilon_1}}U_{\overline{\epsilon_2}}$ and our proof is complete.
	\end{proof}
	
	\begin{lemma}\label{trace_minus_two_in_product of_distinct_unipotents}
		Let $k$ be a field. 
		Let $\overline{\epsilon_1}$, $\overline{\epsilon_2}$ be two distinct elements of  $k^{\times}/k^{\times 2}$. 
		Then $U_{\overline{\epsilon_1}}U_{\overline{\epsilon_2}}$ does not contain any element of trace $-2$. 
	\end{lemma}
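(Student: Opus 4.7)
The plan is to use Observation \ref{general_element_in_product_of_distinct_unipotent_classes_upto_conjugacy} to reduce the problem to a simple trace computation. Since trace is a conjugacy invariant, any element $g \in U_{\overline{\epsilon_1}}U_{\overline{\epsilon_2}}$ of trace $-2$ would be conjugate to some $M(a,b,c,d)$ of trace $-2$, with $a,b,c,d \in k$ and $ad-bc=1$.

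From Observation \ref{general_element_in_product_of_distinct_unipotent_classes_upto_conjugacy}, $\tr(M(a,b,c,d)) = 2 - \epsilon_1\epsilon_2 c^2$. Setting this equal to $-2$ would force $\epsilon_1 \epsilon_2 c^2 = 4$; in particular $c \neq 0$, and hence $\epsilon_1 \epsilon_2 = (2/c)^2 \in k^{\times 2}$. But this says $\overline{\epsilon_1\epsilon_2} = \bar{1}$ in $k^{\times}/k^{\times 2}$, i.e., $\overline{\epsilon_1} = \overline{\epsilon_2}$, contradicting the hypothesis that these classes are distinct. Therefore no element of trace $-2$ can lie in $U_{\overline{\epsilon_1}}U_{\overline{\epsilon_2}}$.

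There is no real obstacle here; the lemma is a direct consequence of the trace formula recorded in the Observation, together with the fact that trace distinguishes conjugacy classes up to the squares that parametrize them. The argument does not even require $u(k) \le 2$ or any restriction beyond $\mathsf{char}(k) \ne 2$ (implicit, since $4 \ne 0$), which is consistent with the lemma being stated over an arbitrary field.
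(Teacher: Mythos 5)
Your proof is correct and follows essentially the same route as the paper: both reduce to the trace formula $\tr(M(a,b,c,d)) = 2 - \epsilon_1\epsilon_2 c^2$ from Observation \ref{general_element_in_product_of_distinct_unipotent_classes_upto_conjugacy} and derive the contradiction $\epsilon_1\epsilon_2 \in k^{\times 2}$. Your added remark spelling out why $c \neq 0$ and noting the implicit $\mathsf{char}(k)\neq 2$ assumption is a minor elaboration of the same argument.
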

	
	\begin{proof}
		Let $\C$ be a conjugacy class in $\SL_2(k)$ with $\tr(\C)=-2$. 
		Using \Cref{general_element_in_product_of_distinct_unipotent_classes_upto_conjugacy},
		$\C \subseteq U_{\overline{\epsilon_1}}U_{\overline{\epsilon_2}}$ 
		if and only if there exists $a,b,c,d$ with $ad-bc=1$ such that 
		$M(a,b,c,d)\in \C$. Suppose we have such a matrix $M(a,b,c,d)\in \SL_2(k)$. 
		Then, by \Cref{general_element_in_product_of_distinct_unipotent_classes_upto_conjugacy}, $\tr(M(a,b,c,d))=2-\epsilon_1\epsilon_2 c^2=-2$ $\Rightarrow \epsilon_1\epsilon_2c^2 = 4 \in k^{\times 2}$; hence we get a contradiction. 
	\end{proof}
	
	Combining \Cref{reality_properties_SL2}(2), \Cref{semisimple_in_product of_distinct_unipotents}, \Cref{unipotents_in_product_of_distinct_unipotents}, and \Cref{trace_minus_two_in_product of_distinct_unipotents}, we can now give a description of the product of two distinct unipotent classes in $\SL_2(k)$.
	
	\begin{proposition}\label{product_of_two_distinct_unipotents}
		Let $k$ be a field with $u(k)\leq 2$. Let $\C$ be a conjugacy class in $\SL_2(k)$. Then, for $\overline{\epsilon_1}, \overline{\epsilon_2} \in k^{\times}/k^{\times 2}$ with $\overline{\epsilon_1}\neq \overline{\epsilon_2}$, $\C\subseteq U_{\overline{\epsilon_1}}U_{\overline{\epsilon_2}}$ if and only if 
		\begin{enumerate}
			\item $\C$ is a semisimple conjugacy class with $2-\tr(\C)\in \epsilon_1\epsilon_2k^{\times 2}$.
			\item $\C$ is a unipotent conjugacy class.
			\item $\C=\{I\}$ provided $-1\notin k^{\times 2}$ and $\overline{\epsilon_2}=\overline{-\epsilon_1}$.
		\end{enumerate}
	\end{proposition}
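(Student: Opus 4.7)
The plan is to partition the conjugacy classes of $\SL_2(k)$ according to their trace and then deal with each case by invoking the preparatory lemmas already proved in this subsection. By \Cref{conjugacy_SL} (and Table \ref{table1}), every non-central class of $\SL_2(k)$ is either semisimple, unipotent, or negative unipotent; the central classes are $\{I\}$ and $\{-I\}$. So the task reduces to deciding, for a class $\C$ of each of these types, whether $\C \subseteq U_{\overline{\epsilon_1}} U_{\overline{\epsilon_2}}$.

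For $\C$ semisimple, the criterion in item (1) is exactly the content of \Cref{semisimple_in_product of_distinct_unipotents}. For $\C$ a unipotent class (trace $2$), item (2) follows directly from \Cref{unipotents_in_product_of_distinct_unipotents}, which shows that every unipotent class lies in the product. For $\C$ a negative unipotent class or for $\C = \{-I\}$, the trace is $-2$, so \Cref{trace_minus_two_in_product of_distinct_unipotents} immediately rules out $\C \subseteq U_{\overline{\epsilon_1}} U_{\overline{\epsilon_2}}$.

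The only remaining case is $\C = \{I\}$. Here $I \in U_{\overline{\epsilon_1}} U_{\overline{\epsilon_2}}$ iff there exist $u_i \in U_{\overline{\epsilon_i}}$ with $u_1 u_2 = I$, equivalently $U_{\overline{\epsilon_1}}^{-1} = U_{\overline{\epsilon_2}}$; by \Cref{reality_properties_SL2}(2) this is precisely the condition $\overline{\epsilon_2} = \overline{-\epsilon_1}$. Moreover, combined with the standing hypothesis $\overline{\epsilon_1} \neq \overline{\epsilon_2}$, this condition forces $\overline{-1} \neq \overline{1}$ in $k^{\times}/k^{\times 2}$, i.e., $-1 \notin k^{\times 2}$, so the hypothesis in item (3) is consistent (and in fact redundant, but stated for clarity).

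I anticipate no serious obstacle: the substantive work, namely the Bruhat-form calculation underlying \Cref{general_element_in_product_of_distinct_unipotent_classes_upto_conjugacy}, the application of $u(k) \leq 2$ to solve the quadratic form $Q(x,y,z) = \epsilon_1 x^2 + \epsilon_2 y^2 - \epsilon z^2 = 0$, and the reality computation for inverses of unipotent classes, has already been carried out in the preceding lemmas. The proposition is simply the clean packaging of those results, and the proof amounts to verifying that the case split by trace is exhaustive and combining the four lemmas in a single statement.
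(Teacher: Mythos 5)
Your proof is correct and follows essentially the same route as the paper, which likewise obtains the proposition by combining \Cref{reality_properties_SL2}(2), \Cref{semisimple_in_product of_distinct_unipotents}, \Cref{unipotents_in_product_of_distinct_unipotents}, and \Cref{trace_minus_two_in_product of_distinct_unipotents} via the same case split on class type. Your explicit handling of the $\C=\{I\}$ case and the observation that the hypothesis $-1\notin k^{\times 2}$ in item (3) is redundant are accurate and slightly more detailed than what the paper records.
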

	
	Now we deal with squares and products of conjugacy classes of negative unipotent elements of $\SL_2(k)$. We merely state the final results for this, since the proofs are very similar to the unipotent case.
	
	\begin{proposition}\label{negative_unipotent_square_description}
		Let $k$ be a field with $u(k)\leq 2$ and $|k|>5$. Let $\C$ be a conjugacy class in $\SL_2(k)$. Then, for $\overline{\epsilon}\in k^{\times}/k^{\times 2}$, $\C\subseteq U_{-1,\overline{\epsilon}}^2$ if and only if 
		\begin{enumerate}
			\item $\C$ is a semisimple conjugacy class with $2-\tr(\C)\in k^{\times 2}$.
			\item $\C$ is a unipotent conjugacy class.
			\item $\C=U_{-1,\overline{-\epsilon}}$.
			\item $\C=\{I\}$ provided $-1\in k^{\times 2}$.
		\end{enumerate}
	\end{proposition}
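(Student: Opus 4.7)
The plan is to reduce the statement directly to \Cref{unipotent_square_description}, rather than replaying its proof with negative unipotents. The key observation is the identity
$$U_{-1,\overline{\epsilon}}=-U_{\overline{-\epsilon}}$$
of subsets of $\SL_2(k)$, which follows at once from
$$g\begin{pmatrix}-1 & \epsilon\\ 0 & -1\end{pmatrix}g^{-1}=-g\begin{pmatrix}1 & -\epsilon\\ 0 & 1\end{pmatrix}g^{-1}\qquad (g\in\SL_2(k))$$
combined with \Cref{conjugacy_SL}(2),(4): as $g$ ranges over $\SL_2(k)$, the left-hand side runs over the full conjugacy class of a negative unipotent with off-diagonal entry $\epsilon$, while the right-hand side runs over the negatives of the conjugacy class of the unipotent with off-diagonal entry $-\epsilon$.

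Since $-I$ is central, squaring this identity yields
$$U_{-1,\overline{\epsilon}}^2=(-U_{\overline{-\epsilon}})(-U_{\overline{-\epsilon}})=U_{\overline{-\epsilon}}^2.$$
Now I invoke \Cref{unipotent_square_description} with $\overline{-\epsilon}$ in place of $\overline{\epsilon}$. Its conditions (1), (2), and (4) transfer unchanged, while its condition (3), which reads $\C=U_{-1,\overline{\delta}}$ for $U_{\overline{\delta}}^2$, becomes $\C=U_{-1,\overline{-\epsilon}}$ upon substituting $\delta=-\epsilon$. This exhausts the four cases in the target statement, and the hypothesis $|k|>5$ is inherited through the appeal to \Cref{unipotent_square_description}.

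There is essentially no obstacle beyond bookkeeping the sign in the reduction above. Should one instead prefer a self-contained proof that parallels the unipotent case, as the authors' phrase ``very similar'' may invite, one would compute the general element of $U_{-1,\overline{\epsilon}}^2$ up to conjugacy using two instances of $N_{\overline{\epsilon}}(\cdot)$ from \Cref{general_element_of_negative_unipotent_class}, extract its trace as a polynomial in the resulting parameters, and then replay \Cref{unipotent_square_contains_itself}, \Cref{unipotent_square_contains_other_unipotents}, and \Cref{trace_negative_two_in_unipotent_square} with negative unipotents in place of unipotents; the field-theoretic inputs, such as the existence of $a\in k^{\times}$ with $1+a^2\in k^{\times 2}$ when $|k|>5$, are identical in both settings.
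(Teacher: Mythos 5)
Your reduction is correct, and it is genuinely different from what the paper does: the authors give no proof of this proposition at all, remarking only that ``the proofs are very similar to the unipotent case,'' which implicitly means re-running the computations of Observation \ref{general_element_in_product_of_distinct_unipotent_classes_upto_conjugacy} and Lemmas \ref{unipotent_square_contains_itself}--\ref{trace_negative_two_in_unipotent_square} with the matrices $N_{\overline{\epsilon}}(e,f,g,h)$ in place of $M(a,b,c,d)$. Your observation that
\[
U_{-1,\overline{\epsilon}}=-\,U_{\overline{-\epsilon}},\qquad\text{hence}\qquad U_{-1,\overline{\epsilon}}^{2}=U_{\overline{-\epsilon}}^{2}
\]
because $-I$ is central, collapses the whole proposition to a substitution $\overline{\epsilon}\mapsto\overline{-\epsilon}$ in \Cref{unipotent_square_description}; the four cases then line up exactly as you say, with only case (3) picking up the sign. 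This is cleaner and less error-prone than the parallel computation, and the identity $U_{-1,\overline{\epsilon}}=-U_{\overline{-\epsilon}}$ is in effect the same trick the authors themselves use later (in the proofs of Theorems \ref{square_semisimple_class_PSL} and \ref{product_of_distinct_unipotent_class_PSL}) when they pass between classes of $x$ and $-x$, so it is entirely in the spirit of the paper. What the authors' (omitted) route buys is uniformity of exposition -- the same trace computation handles squares, products of distinct classes, and mixed unipotent/negative-unipotent products -- whereas your reduction is specific to the square (and more generally to products $U_{-1,\overline{\epsilon_1}}U_{-1,\overline{\epsilon_2}}=U_{\overline{-\epsilon_1}}U_{\overline{-\epsilon_2}}$, which would likewise dispatch \Cref{product_of_two_distinct_negative_unipotents}). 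Your closing sketch of the self-contained alternative is also accurate.
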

	
	\begin{proposition}\label{product_of_two_distinct_negative_unipotents}
		Let $k$ be a field with $u(k)\leq 2$. Let $\C$ be a conjugacy class in $\SL_2(k)$. Then, for $\overline{\epsilon_1}, \overline{\epsilon_2} \in k^{\times}/k^{\times 2}$ with $\overline{\epsilon_1}\neq \overline{\epsilon_2}$, $\C\subseteq U_{-1,\overline{\epsilon_1}}U_{-1,\overline{\epsilon_2}}$ if and only if 
		\begin{enumerate}
			\item $\C$ is a semisimple conjugacy class with $2-\tr(\C)\in \epsilon_1\epsilon_2k^{\times 2}$.
			\item $\C$ is a unipotent conjugacy class.
			\item $\C=\{I\}$ provided $-1\notin k^{\times 2}$ and $\overline{\epsilon_2}=\overline{-\epsilon_1}$.
		\end{enumerate}
	\end{proposition}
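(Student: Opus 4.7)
The plan is to reduce the statement directly to \Cref{product_of_two_distinct_unipotents} using the central involution $x\mapsto -x$ on $\SL_2(k)$. From the representatives in Table~\ref{table1} I observe that $-\begin{pmatrix}-1 & \epsilon \\ 0 & -1\end{pmatrix} = \begin{pmatrix}1 & -\epsilon \\ 0 & 1\end{pmatrix}$; combined with \Cref{conjugacy_SL}(2) and (4), this promotes to the set identity
\[ U_{-1,\overline{\epsilon}} \;=\; -\,U_{\overline{-\epsilon}} \]
inside $\SL_2(k)$ for every $\overline{\epsilon}\in k^\times/k^{\times 2}$.

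Since $(-u)(-v)=uv$ for any $u,v\in \SL_2(k)$, applying this identity to both factors yields the set equality $U_{-1,\overline{\epsilon_1}}\,U_{-1,\overline{\epsilon_2}} = U_{\overline{-\epsilon_1}}\,U_{\overline{-\epsilon_2}}$. The hypothesis $\overline{\epsilon_1}\neq\overline{\epsilon_2}$ is equivalent to $\overline{-\epsilon_1}\neq\overline{-\epsilon_2}$, so \Cref{product_of_two_distinct_unipotents} applies to the right-hand side with parameter pair $(\overline{-\epsilon_1},\overline{-\epsilon_2})$.

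It then remains only to check that the three listed conditions translate verbatim under this substitution: since $(-\epsilon_1)(-\epsilon_2)=\epsilon_1\epsilon_2$, the semisimple trace condition (1) is unchanged; (2) is class-theoretic and carries over; and the sign-switched version of (3), namely $\overline{-\epsilon_2}=\overline{-(-\epsilon_1)}$, is exactly $\overline{\epsilon_2}=\overline{-\epsilon_1}$. There is no serious obstacle here; the only genuine point to verify is the class-level identity $U_{-1,\overline{\epsilon}}= -\,U_{\overline{-\epsilon}}$, which is immediate from \Cref{conjugacy_SL}(2) and (4). If one prefers to avoid the reduction, the alternative is to mimic the unipotent case step by step: compute a general element of $U_{-1,\overline{\epsilon_1}}U_{-1,\overline{\epsilon_2}}$ analogous to \Cref{general_element_in_product_of_distinct_unipotent_classes_upto_conjugacy}, note that the resulting trace formula $2-\epsilon_1\epsilon_2 c^2$ is unchanged, and re-run the analogues of \Cref{unipotents_in_product_of_distinct_unipotents} and \Cref{trace_minus_two_in_product of_distinct_unipotents} with $M(a,b,c,d)$ replaced by its negative-unipotent counterpart. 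This is essentially what the authors allude to when they say the proof is very similar to the unipotent case.
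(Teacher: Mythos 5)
Your proposal is correct, and your primary route is genuinely different from (and slicker than) what the paper does. The paper omits the proof entirely, saying only that ``the proofs are very similar to the unipotent case,'' which signals that the authors intend one to redo the matrix computations of Observation~\ref{general_element_in_product_of_distinct_unipotent_classes_upto_conjugacy} and Lemmas~\ref{unipotents_in_product_of_distinct_unipotents}--\ref{trace_minus_two_in_product of_distinct_unipotents} with negative unipotent representatives --- essentially your fallback option. Your main argument instead exploits the central involution: the identity $U_{-1,\overline{\epsilon}}=-\,U_{\overline{-\epsilon}}$ is immediate because $-I$ is central (so negation commutes with conjugation and sends the class of $\left(\begin{smallmatrix}1 & -\epsilon\\ 0 & 1\end{smallmatrix}\right)$ to that of $\left(\begin{smallmatrix}-1 & \epsilon\\ 0 & -1\end{smallmatrix}\right)$), and the two signs cancel in the product, giving $U_{-1,\overline{\epsilon_1}}U_{-1,\overline{\epsilon_2}}=U_{\overline{-\epsilon_1}}U_{\overline{-\epsilon_2}}$ as an exact set equality. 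Your bookkeeping of the three conditions under $(\overline{\epsilon_1},\overline{\epsilon_2})\mapsto(\overline{-\epsilon_1},\overline{-\epsilon_2})$ checks out: $(-\epsilon_1)(-\epsilon_2)=\epsilon_1\epsilon_2$ preserves condition (1), and $\overline{-\epsilon_2}=\overline{\epsilon_1}$ is exactly $\overline{\epsilon_2}=\overline{-\epsilon_1}$ for condition (3). What the reduction buys is that no new computation is needed at all --- the proposition becomes a formal corollary of \Cref{product_of_two_distinct_unipotents} --- whereas the paper's implied approach requires trusting (or verifying) that the parallel computation goes through unchanged.
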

	
	\noindent Now we describe  the product of a unipotent class and a negative unipotent class in $\SL_2(k)$.
	\begin{proposition}\label{product_of_unipotent_negative_unipotent_class_SL}
		Let $k$ be a field with $u(k)\leq 2$. Let $\C$ be a conjugacy class in $SL_2(k)$. Then, for any $\overline{\epsilon_1}, \overline{\epsilon_2}\in k^{\times}/k^{\times 2}$,  $\C\subseteq U_{\overline{\epsilon_1}}U_{-1,\overline{\epsilon_2}}$ if and only if 
		\begin{enumerate}
			\item $\C$ is a semisimple conjugacy class such that $-2-\tr(\C)\in \epsilon_1\epsilon_2k^{\times 2}.$
			
			\item $\C=U_{\overline{-\epsilon_1}}$,  provided $\overline{\epsilon_2}=\overline{-\epsilon_1}.$
			
			\item $\C$ is a negative unipotent conjugacy class.

			\item $\C=\{-I\}$, provided $\overline{\epsilon_1}=\overline{\epsilon_2}$.
		\end{enumerate}
	\end{proposition}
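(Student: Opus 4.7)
The approach mirrors the proofs of \Cref{product_of_two_distinct_unipotents} and \Cref{product_of_two_distinct_negative_unipotents}. First, using \Cref{prep_lemma_2} with the standard class representatives, every element of $U_{\overline{\epsilon_1}}U_{-1,\overline{\epsilon_2}}$ is $\SL_2(k)$-conjugate to
\[M'(a,b,c,d):=\begin{pmatrix}-1+\epsilon_1 ac & \epsilon_2-\epsilon_1\epsilon_2 ac-\epsilon_1 a^2\\ \epsilon_1 c^2 & -1-\epsilon_1 ac-\epsilon_1\epsilon_2 c^2\end{pmatrix},\]
for some $a,b,c,d\in k$ with $ad-bc=1$, and $\tr(M'(a,b,c,d))=-2-\epsilon_1\epsilon_2 c^2$. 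The proof then splits into cases according to the value of $\tr(\C)$.

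For $\tr(\C)\notin\{\pm 2\}$ (semisimple case), the trace formula immediately gives $\C\subseteq U_{\overline{\epsilon_1}}U_{-1,\overline{\epsilon_2}}$ iff $-2-\tr(\C)\in\epsilon_1\epsilon_2 k^{\times 2}$. Given such a $c$, choosing $a=0$ and $(b,d)$ with $-bc=1$ produces a matrix of trace $\tr(\C)$, which lies in $\C$ by \Cref{conjugacy_SL}(1); \Cref{prep_lemma_1} then delivers the whole class. This settles (1). For $\tr(\C)=2$, the equation $\epsilon_1\epsilon_2 c^2=-4$ forces $\overline{\epsilon_2}=\overline{-\epsilon_1}$, which explains the hypothesis in (2); when it holds, $M'$ has nonzero lower-left entry $\epsilon_1 c^2$, so $M'\neq I$ is unipotent. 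A short calculation (conjugation of $\begin{pmatrix}1 & 0\\ \gamma & 1\end{pmatrix}$ by the Weyl element) shows that any unipotent in $\SL_2(k)$ with lower-left entry $\gamma\neq 0$ lies in $U_{\overline{-\gamma}}$; applied here, $M'\in U_{\overline{-\epsilon_1 c^2}}=U_{\overline{-\epsilon_1}}$, proving (2).

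Finally, $\tr(\C)=-2$ forces $c=0$ and $M'=\begin{pmatrix}-1 & \epsilon_2-\epsilon_1 a^2\\ 0 & -1\end{pmatrix}$. The choice $\epsilon_1 a^2=\epsilon_2$, solvable exactly when $\overline{\epsilon_1}=\overline{\epsilon_2}$, yields $-I$, giving (4). For (3), to reach $U_{-1,\overline{\epsilon_3}}$ one needs $a\in k$ with $\epsilon_2-\epsilon_1 a^2\in\epsilon_3 k^{\times 2}\setminus\{0\}$; equivalently, the ternary form $\epsilon_1 X^2+\epsilon_3 Y^2-\epsilon_2 W^2$ must have an isotropic vector $(x_0,y_0,w_0)$ with $y_0,w_0\neq 0$. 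Existence of an isotropic vector is guaranteed by $u(k)\leq 2$, and the degenerate possibilities, $y_0=0$ (forcing $\overline{\epsilon_1}=\overline{\epsilon_2}$) and $w_0=0$ (forcing $\overline{\epsilon_3}=\overline{-\epsilon_1}$), are each disposed of by parametric constructions as in \Cref{field_lemma_1} and \Cref{field_lemma_2}, using $|k|\geq 5$ to avoid the finitely many bad parameter values. I expect this last step to be the main technical burden: tracking the interaction of the square classes $\overline{\epsilon_1},\overline{\epsilon_2},\overline{\epsilon_3}$ and $\overline{-1}$ produces several elementary sub-cases that must be combined uniformly to ensure every $\overline{\epsilon_3}\in k^{\times}/k^{\times 2}$ is realized.
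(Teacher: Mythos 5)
Your proposal follows the same skeleton as the paper's proof: reduce via \Cref{prep_lemma_2} to the explicit matrix $P(a,b,c,d)$ (your $M'$), and sort the target classes by the trace $-2-\epsilon_1\epsilon_2c^2$. Your treatment of the semisimple case (1) and of $-I$ in (4) coincides with the paper's. You diverge in (2) and (3): the paper handles unipotent and negative-unipotent targets by rearrangement, rewriting $U_{\overline{\epsilon_3}}\subseteq U_{\overline{\epsilon_1}}U_{-1,\overline{\epsilon_2}}$ as $U_{-1,\overline{\epsilon_2}}\subseteq U_{\overline{\epsilon_3}}U_{\overline{-\epsilon_1}}$ and $U_{-1,\overline{\epsilon_3}}\subseteq U_{\overline{\epsilon_1}}U_{-1,\overline{\epsilon_2}}$ as $U_{\overline{\epsilon_1}}\subseteq U_{-1,\overline{\epsilon_3}}U_{-1,\overline{-\epsilon_2}}$, then quoting \Cref{trace_negative_two_in_unipotent_square}, \Cref{product_of_two_distinct_unipotents}, \Cref{negative_unipotent_square_description} and \Cref{product_of_two_distinct_negative_unipotents}; you argue directly on $M'$. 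Your observation that a non-identity unipotent with lower-left entry $\gamma\neq 0$ lies in $U_{\overline{-\gamma}}$ is correct and gives (2) cleanly and self-containedly, arguably more transparently than the paper's reduction.

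The gap is in case (3). Since $\tr(\C)=-2$ forces $c=0$ and hence $ad=1$, the parameter $a$ is necessarily nonzero, so you need an isotropic vector of $\epsilon_1X^2+\epsilon_3Y^2-\epsilon_2W^2$ with \emph{all three} coordinates nonzero; your list of degenerate possibilities omits $x_0=0$, which forces $\overline{\epsilon_3}=\overline{\epsilon_2}$ and requires its own construction (finding $a\in k^{\times}$ with $1-\epsilon_1\epsilon_2^{-1}a^2\in k^{\times 2}$). Moreover, this omitted sub-case is not disposed of by ``$|k|\geq 5$'': over $\F_5$ with $\overline{\epsilon_1}=\overline{\epsilon_2}=\overline{\epsilon_3}=\bar{1}$ one needs $1-a^2\in\{1,4\}$ for some $a\in\F_5^{\times}$, and $1-a^2\in\{0,2\}$ only, so the containment $U_{-1,\bar{1}}\subseteq U_{\bar{1}}U_{-1,\bar{1}}$ actually fails there. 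This is consistent with the fact that the paper's own route for (3) passes through \Cref{negative_unipotent_square_description}, which carries the hypothesis $|k|>5$; so to complete your argument you must both add the missing sub-case and strengthen $|k|\geq 5$ to $|k|>5$ for that sub-case (when $\overline{\epsilon_1}=\overline{\epsilon_2}$). The remaining degenerate cases you do list are handled correctly by the parametric constructions you cite.
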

	
	\begin{proof}
		Most of the above theorem will directly follow from previous results. 
Using \Cref{prep_lemma_2}, any element of $U_{\overline{\epsilon_1}}U_{-1,\overline{\epsilon_2}}$ is conjugate to a matrix of the following form:
		$$P(a,b,c,d)= 
		\begin{pmatrix} a & b \\ c & d \end{pmatrix} 
		\begin{pmatrix}1 & \epsilon_1 \\ 0 & 1\end{pmatrix}
		\begin{pmatrix} a & b \\ c & d \end{pmatrix}^{-1} 
		\begin{pmatrix}-1 & \epsilon_2 \\ 0 & -1\end{pmatrix}= 
		\begin{pmatrix}-1+\epsilon_1 ac & \epsilon_2 -\epsilon_1 a^2 -\epsilon_1 \epsilon_2 ac \\ \epsilon_1 c^2 & -1-\epsilon_1 ac-\epsilon_1 \epsilon_2 c^2\end{pmatrix};$$
		where $a,b,c,d\in k$ with $ad-bc=1$. We observe that the trace of $P(a,b,c,d)$ equals $-2-\epsilon_1 \epsilon_2 c^2$. Clearly, $I\notin U_{\overline{\epsilon_1}}U_{-1,\overline{\epsilon_2}}$.
		Let $\C$ be a semisimple class.Then $\C\subseteq U_{\overline{\epsilon_1}}U_{-1,\overline{\epsilon_2}}$ if and only if $-2-\tr(\C)\in \epsilon_1\epsilon_2k^{\times 2}$. 
		Let $\overline{\epsilon_3}\in k^{\times}/k^{\times 2}$. For unipotent classes, we have $U_{\overline{\epsilon_3}}\subseteq U_{\overline{\epsilon_1}}U_{-1,\overline{\epsilon_2}}\Leftrightarrow U_{-1,\overline{\epsilon_2}}\subseteq U_{\overline{\epsilon_3}}U_{\overline{-\epsilon_1}}$. By \Cref{trace_negative_two_in_unipotent_square}(3) and \Cref{product_of_two_distinct_unipotents}, the later holds true if and only if $\overline{-\epsilon_1}=\overline{\epsilon_3}=\overline{\epsilon_2}$. This proves that for a unipotent class $\C$, $\C\subseteq U_{\overline{\epsilon_1}}U_{-1,\overline{\epsilon_2}}$ if and only if $\C=U_{\overline{-\epsilon_1}}$,  provided $\overline{\epsilon_2}=\overline{-\epsilon_1}$. For negative unipotent classes, we have $U_{-1,\overline{\epsilon_3}}\subseteq U_{\overline{\epsilon_1}}U_{-1,\overline{\epsilon_2}}\Leftrightarrow U_{\overline{\epsilon_1}}\subseteq U_{-1,\overline{\epsilon_3}}U_{-1,\overline{-\epsilon_2}}$, where the latter holds true from the previous two propositions. We conclude that $U_{\overline{\epsilon_1}}U_{-1,\overline{\epsilon_2}}$ contains all negative unipotent elements of $\SL_2(k)$. Finally, by taking $c=0$ in the matrix $P(a,b,c,d)$ and observing that (1,2) position of $P(a,b,c,d)$ is $0$ if and only if $\overline{\epsilon_1}=\overline{\epsilon_2}$, we conclude that $-I \subseteq U_{\overline{\epsilon_1}}U_{-1,\overline{\epsilon_2}}$ if and only if $\overline{\epsilon_1}=\overline{\epsilon_2}$.
	\end{proof}
	
	\begin{proof}[\textbf{Proof of \Cref{product_of_distinct_unipotent_class_PSL}}] Let $\C_1$ and $\C_2$ be as in the statement of the theorem. Then, $\bar{x}\in \C_1\C_2$, (where $x\in \SL_2(k)$) if and only if there exists $y\in \SL_2(k)$ such that $\bar{y}=\bar{x}$ and $y$ is in one of $U_{\overline{\epsilon_1}}U_{\overline{\epsilon_2}}$, $U_{-1,\overline{-\epsilon_1}}U_{-1,\overline{-\epsilon_2}}$, $U_{\overline{\epsilon_1}}U_{-1,\overline{-\epsilon_2}}$, or $U_{-1,\overline{-\epsilon_1}}U_{\overline{\epsilon_2}}$. When $|k|>5$, the proof follows directly from \Cref{unipotent_square_description}, \Cref{product_of_two_distinct_unipotents}, \Cref{negative_unipotent_square_description}, \Cref{product_of_two_distinct_negative_unipotents} and \Cref{product_of_unipotent_negative_unipotent_class_SL}. When $|k|=5$, for $\bar{\epsilon}\in k^{\times}/k^{\times 2}$, using \Cref{trace_negative_two_in_unipotent_square} we get $U_{-1,\overline{\epsilon}}\subseteq  U_{\bar{\epsilon}}^2$. Note that the only problem when $|k|=5$ comes from \Cref{unipotent_square_contains_itself} which becomes redundant in $\psl_2(k)$ as $\overline{\begin{pmatrix}1 & \epsilon \\ 0 & 1\end{pmatrix}}=\overline{\begin{pmatrix}-1 & \epsilon \\ 0 & -1\end{pmatrix}}$ in $\psl_2(k)$ in this case. Thus our proof follows in this case as well.
	\end{proof}
	
	\medskip
	
	\noindent \cite[Theorem 3]{ga} describes the square of a unipotent class in $\psl_2(q)$ in terms of orders of its elements. For completeness, we state \Cref{product_of_distinct_unipotent_class_PSL} for the finite field $\F_q$ in terms of order of the elements. We require the notion of elements of $q$-good order in $\psl_2(q)$ (see \cite[Definition 2]{ga}). We recall it here once again but we separate out the two types of semisimple elements (that is, split and non-split) in the original definition, as it will help us to state our result with clarity.
	
	\begin{definition}[Elements of $q$-good order] Let $q>3$ be odd. Suppose $x\in \SL_2(q)$ be semisimple and $\bar{x}$ be its image in $\psl_2(q)$. Suppose  $|\bar{x}|=t$. 
		\begin{enumerate}
			\item If $x$ is split semisimple, we say $\bar{x}$ is of $q$-good order if $t$ is odd and $t\mid q-1$, or $t$ is even and $4t\mid q-1$.
			
			\item If $x$ is non-split semisimple, we say $\bar{x}$ is of $q$-good order if $t$ is odd and $t\mid q+1$, or $t$ is even and $4t\mid q+1$.
		\end{enumerate}
	\end{definition}
	
	For an element $x\in \SL_2(q)$, we say that $\bar{x}\in \psl_2(q)$ is of \textbf{$q$-bad order} if it is not of $q$-good order. We have the following result.
	
	\begin{theorem}\label{product_distinct_unipotent_finite_fields}
		Let $G:=\psl_2(q)$ and $\C_1,\C_2$ be the two distinct unipotent conjugacy classes in $\psl_2(q)$.
		\begin{enumerate}
			\item If $q\equiv 1(\rm{mod}\;4)$ then,
			$$\C_1\C_2=\{unipotents \} \cup \{non\text{-}split\;semisimple\;elements\}\cup \{split \;semisimple\;elements\;of\;q\text{-}bad\;order\}.$$
			
			\item If $q\equiv 3(\rm{mod}\;4)$ then,
			$$\C_1\C_2=\{unipotents \} \cup \{split\;semisimple\;elements\}\cup \{non\text{-}split \;semisimple\;elements\;of\;q\text{-}bad\;order\} \cup\{1\}.$$
		\end{enumerate}
	\end{theorem}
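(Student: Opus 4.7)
The plan is to specialize Theorem \ref{product_of_distinct_unipotent_class_PSL} to $k = \F_q$ and translate the resulting description of $\Phi$ into the language of $q$-good/$q$-bad orders. Since $|\F_q^\times/\F_q^{\times 2}| = 2$ for $q$ odd, the two unipotent classes in $\psl_2(q)$ are $\overline{U_{\bar 1}}$ and $\overline{U_{\bar\nu}}$ for a fixed non-square $\nu$. Taking $\epsilon_1 = 1$ and $\epsilon_2 = \nu$, the coset $\epsilon_1\epsilon_2\F_q^{\times 2}$ is exactly the set of non-squares, and the condition $\overline{\epsilon_2} = \overline{-\epsilon_1}$ becomes $-1 \notin \F_q^{\times 2}$, i.e.\ $q \equiv 3 \pmod{4}$; this already accounts for $\{1\}$ lying in $\C_1\C_2$ exactly in case (2).

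For a semisimple $x \in \SL_2(q)$ with eigenvalue $\alpha$ (in $\F_q^\times\setminus\{\pm 1\}$ in the split case, in $\mu_{q+1}\setminus\F_q$ in the non-split case), the identities
\[2-\tr(x) = -\alpha^{-1}(\alpha-1)^2, \qquad 2+\tr(x) = \alpha^{-1}(\alpha+1)^2\]
reduce the squareness test to raising these elements to the $(q-1)/2$-th power. In the split case this is direct; in the non-split case one uses $\alpha^q = \alpha^{-1}$, which yields $(\alpha-1)^{q-1} = -\alpha^{-1}$ and $(\alpha+1)^{q-1} = \alpha^{-1}$, hence
\[(2-\tr(x))^{(q-1)/2} = (-1)^{(q+1)/2}\alpha^{-(q+1)/2}, \qquad (2+\tr(x))^{(q-1)/2} = \alpha^{-(q+1)/2},\]
with the analogous split formulas having $(q-1)$ in place of $(q+1)$. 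In the mixed-parity cases ($q\equiv 1$ non-split and $q\equiv 3$ split) the two expressions differ by the sign $(-1)^{(q\pm 1)/2} = -1$, so the two squareness tests always disagree and every such $\bar x$ lies in $\Phi$; in the matching cases ($q\equiv 1$ split and $q\equiv 3$ non-split) the tests coincide and $\bar x\notin\Phi$ becomes the single divisibility condition $|\alpha|\mid (q-1)/2$, respectively $|\alpha|\mid (q+1)/2$.

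The final step converts the order of $\alpha$ to $t = |\bar x|$ using the dichotomy $t = |\alpha|$ when $|\alpha|$ is odd and $t = |\alpha|/2$ when $|\alpha|$ is even. Splitting on the parity of $t$ (odd $t$ is automatic, since $4\mid q\mp 1$ in the relevant subcase forces $|\alpha|\mid (q\mp 1)/2$; even $t$ translates the divisibility to $4t\mid q\mp 1$), one verifies that $\bar x\notin\Phi$ is equivalent to $\bar x$ being of $q$-good order in the corresponding split or non-split sense. Plugging this back into Theorem \ref{product_of_distinct_unipotent_class_PSL} gives the stated descriptions. The main, rather mild, technical obstacle is the $(q-1)/2$-th power calculation in the non-split case, where one must work with $\alpha \in \F_{q^2}\setminus\F_q$ and repeatedly invoke $\alpha^q = \alpha^{-1}$; once the two displayed formulas are in hand, the remainder is arithmetic bookkeeping.
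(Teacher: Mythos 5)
Your proposal is correct, and it follows the same skeleton as the paper: specialize Theorem \ref{product_of_distinct_unipotent_class_PSL} to $\F_q$, note that $\epsilon_1\epsilon_2 k^{\times 2}$ is the non-square coset and that $\overline{\epsilon_2}=\overline{-\epsilon_1}$ amounts to $q\equiv 3\pmod 4$, and then analyse the quadratic characters of $2-\tr(x)=-\alpha^{-1}(\alpha-1)^2$ and $2+\tr(x)=\alpha^{-1}(\alpha+1)^2$ in the four cases (split/non-split crossed with $q\bmod 4$). Where you diverge is in how the order condition enters: the paper simply cites Garion's result \cite[Proposition 9]{ga}, which says that $\bar x$ is of $q$-good order if and only if one of $2\pm\tr(x)$ is a square, and then only needs to decide in each case whether the two characters agree or disagree (which it does via the observation that $(2+\tr(x))(2-\tr(x))=-(a-a^{-1})^2$ and that $(a-a^{-1})^2$ is a non-square exactly in the non-split case). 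You instead prove the order translation from scratch by Euler's criterion, computing $(2\pm\tr(x))^{(q-1)/2}$ as explicit signed powers of $\alpha$ (using $\alpha^q=\alpha^{-1}$ in the non-split case), which yields both the agree/disagree dichotomy and the divisibility conditions $|\alpha|\mid(q\mp 1)/2$ in one stroke, followed by the $t=|\alpha|$ versus $t=|\alpha|/2$ bookkeeping to land on the $q$-good/$q$-bad dichotomy. Your route is more computational but fully self-contained, effectively reproving Garion's Proposition 9 inline; the paper's is shorter at the cost of an external reference. I checked the key identities $(\alpha\mp 1)^{q-1}=\mp\alpha^{-1}$ and the parity of $(q\pm1)/2$ in each case, and your conclusions match the theorem in all four cases, including the treatment of $\{1\}$.
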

	
	\begin{proof}
	 By \Cref{product_of_distinct_unipotent_class_PSL}, $\C_1\C_2$ contains all unipotent elements. Further, by \Cref{reality_properties_SL2}, $1\in \C_1\C_2$ if and only if $q\equiv 3(\text{mod }4)$. Now we deal with semisimple elements of $\psl_2(q)$. We make use of \cite[Proposition 9]{ga}, which states that if $\bar{x}\in \psl_2(q)$ (where $x\in \SL_2(q)$ is semisimple), then $\bar{x}$ is of $q$-good order if and only if one of $2-\tr(x)$ or $2+\tr(x)$ is a square in $\F_q^{\times}$. Thus, $\bar{x}$ is of $q$-bad order if and only if both $2+\tr(x)$ and $2-\tr(x)$ are non-squares in $\F_q^{\times}$.
	 
	 Suppose $x\in \SL_2(q)$ is semisimple. By \Cref{product_of_distinct_unipotent_class_PSL}, $\bar{x}\in \C_1\C_2$ if and only if one of $2-\tr(x)$ or $2+\tr(x)$ is a non-square in $\F_q^{\times}$. We make two separate cases.
	 
	 \medskip
	 
	 \noindent \textbf{Case I:} Suppose $x$ is split semisimple. Then $\tr(x)=a+a^{-1}$ for some $a\in \F_q^{\times}$. Note that $2+\tr(x)=\frac{(a+1)^2}{a}$ and $2-\tr(x)=-\frac{(a-1)^2}{a}$. If $q\equiv 1(\text{mod }4)$, then either both $2-\tr(x)$ and $2+\tr(x)$ are squares (when $a\in \F_q^{\times 2}$), or both are non-squares (when $a\notin \F_q^{\times 2}$). Thus using the first paragraph, we conclude that $\bar{x}\in \C_1\C_2$ if and only if  $\bar{x}$ is of $q$-bad order. Suppose now that $q\equiv 3(\text{mod }4)$. In this case, note that one of $2+\tr(x)$ or $2-\tr(x)$ is always a non-square and hence we conclude that $\C_1\C_2$ contains all split semisimple elements.
	 
	 \medskip
	 
	 \noindent \textbf{Case II:} Suppose $x$ is non-split semisimple. Then $\tr(x)=a+a^q$, where $a\in \F_{q^2}^1\setminus\{\pm 1\}$. Here $\F_{q^2}^{1}:=\{y \in \F_{q^2}\mid y^{1+q}=1\}$. Since $a^{q}=a^{-1}$ in $\F_{q^2}$, we get that $2+\tr(x)=\frac{(a+1)^2}{a}$ and $2-\tr(x)=-\frac{(a-1)^2}{a}$. We claim that $\frac{(a+1)^2(a-1)^2}{a^2}=(a-a^{-1})^2\notin \F_q^{\times 2}$. On the contrary, let $x\in \F_q^{\times}$ be such that $x^2=(a-a^{-1})^2$. Then $x=\pm(a-a^{-1})\in \F_q^{\times}$. Since $a+a^{-1}\in \F_q$, we conclude that $a\in \F_q$, a contradiction to our assumption. Assume that $q\equiv 1(\text{mod }4)$. We claim that one of $2-\tr(x)$ or $2+\tr(x)$ is a non-square in $\F_q^{\times}$. On the contrary, assume that both $2-\tr(x)$ and $2+\tr(x)$ is a square, which implies that $(2+\tr(x))(2-\tr(x))\in \F_{q}^{\times 2}$. Thus, we conclude that  $\frac{(a+1)^2(a-1)^2}{a^2}\in \F_q^{\times 2}$ (as $-1\in \F_q^{\times 2})$, a contradiction to our previous claim. This proves the claim, and by the first paragraph of the proof, we conclude that $\C_1\C_2$ contains all non-split semisimple elements in this case. Assume $q\equiv 3(\text{mod }4)$. We claim that either both $2-\tr(x)$ and $2+\tr(x)$ are squares, or both are non-squares. On the contrary, let us assume that one of them is a square and the other is a non-square. Then $-\frac{(a+1)^2(a-1)^2}{a^2}\notin \F_q^{\times 2}\Rightarrow \frac{(a+1)^2(a-1)^2}{a^2}\in \F_q^{\times 2}$ (as $-1\notin \F_q^{\times 2})$. This contradicts one of our previous claim and we conclude $\bar{x}\in \C_1\C_2$ if and only if $\bar{x}$ is of $q$-bad order (see the first paragraph of the proof). Our proof is now complete.
	\end{proof}
	We now proceed towards the final subsection of this section.
	
	\subsection{Product of a unipotent/negative unipotent  and a semisimple class in $\SL_2(k)$ and $\psl_2(k)$} 
	
	\begin{proposition}\label{semisimple_times_unipotent}
		Let $k$ be a field with $u(k)\leq 2$. Let $\overline{\epsilon}\in k^{\times}/k^{\times 2}$, and $\C$ be a semisimple conjugacy class in $\SL_2(k)$. Then the followings hold:
		\begin{enumerate}
			\item $\C U_{\overline{\epsilon}}$ does not contain $\pm I$. 
			\item $\C U_{\overline{\epsilon}}$ contains $\C$ if and only if $\C$ is split semisimple. 
			\item $\C U_{\overline{\epsilon}}$ contains $\C'$ for all semisimple conjugacy class $\C'$ with $\C'\neq \C$. 
			\item For a unipotent conjugacy class $U_{\overline{\epsilon_1}}$,  $\C U_{\overline{\epsilon}}$ contains $U_{\overline{\epsilon_1}}$ if and only if $\overline{\tr(\C)-2} =\overline{\epsilon \epsilon_1}\in k^{\times}/k^{\times 2}$. 
			\item For a negative unipotent conjugacy class $U_{-1,\overline{\epsilon_1}}$, $\C U_{\overline{\epsilon}}$ contains $U_{-1,\overline{\epsilon_1}}$ if and only if $\overline{\tr(\C)+2} =\overline{\epsilon \epsilon_1}\in k^{\times}/k^{\times 2}$. 
		\end{enumerate}
	\end{proposition}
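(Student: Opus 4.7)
The key organising observation is that every set of the form $\C_1\C_2$, where $\C_1,\C_2$ are conjugacy classes in $\SL_2(k)$, is closed under conjugation; hence for any third conjugacy class $\C_3$ we have $\C_3\subseteq \C_1\C_2$ if and only if $\C_3\cap\C_1\C_2\neq\emptyset$. Combining this with \Cref{reality_properties_SL2} (which gives $\C^{-1}=\C$ for semisimple $\C$ and $U_{\overline{\epsilon}}^{-1}=U_{\overline{-\epsilon}}$), every assertion of the proposition can be restated as a question about a product already analysed in the preceding results. My plan is to handle the five items via this translation, invoking \Cref{square_semisimple_class_SL}, \Cref{product_distinct_semisimple_class_SL}, \Cref{unipotent_square_description}, \Cref{product_of_two_distinct_unipotents}, and \Cref{product_of_unipotent_negative_unipotent_class_SL} one by one.

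For part (1), I would argue by traces: if $\pm I=cu$ with $c\in\C$ and $u\in U_{\overline{\epsilon}}$, then $c=\pm u^{-1}$ has trace $\pm2$, contradicting the fact that a semisimple element of $\SL_2(k)$ has trace different from $\pm2$. For (3), note that $\C'\subseteq \C U_{\overline{\epsilon}}$ is equivalent to $U_{\overline{\epsilon}}\cap \C\C'\neq\emptyset$ after left-multiplying by $c^{-1}\in\C$; since $\C\neq\C'$ are distinct semisimple classes, \Cref{product_distinct_semisimple_class_SL} tells us $\C\C'$ contains every unipotent element, which does it. For part (2), the split case is handled by an explicit check: with $x=\mathrm{diag}(\alpha,\alpha^{-1})$ and $u=\x_{12}(\epsilon)$, the product $xu$ is upper-triangular with eigenvalues $\alpha,\alpha^{-1}$, hence lies in $\C$ by \Cref{conjugacy_SL}(1); the non-split case follows from the same translation as (3) applied to $\C'=\C$: $\C\subseteq\C U_{\overline{\epsilon}}$ would force $U_{\overline{\epsilon}}\cap\C^2\neq\emptyset$, but \Cref{square_semisimple_class_SL}(2) shows $\C^2$ contains no unipotent elements when $\C$ is non-split semisimple.

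For (4), the equivalence $U_{\overline{\epsilon_1}}\subseteq\C U_{\overline{\epsilon}}$ translates via $c=u'u^{-1}$ to $\C\cap U_{\overline{\epsilon_1}}U_{\overline{-\epsilon}}\neq\emptyset$, which by the closure property is the same as $\C\subseteq U_{\overline{\epsilon_1}}U_{\overline{-\epsilon}}$. Then I would apply \Cref{unipotent_square_description}(1) (if $\overline{\epsilon_1}=\overline{-\epsilon}$) or \Cref{product_of_two_distinct_unipotents}(1) (otherwise): in either case the condition reads $2-\tr(\C)\in -\epsilon\epsilon_1k^{\times2}$, which is exactly $\overline{\tr(\C)-2}=\overline{\epsilon\epsilon_1}$. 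Part (5) is entirely parallel: $U_{-1,\overline{\epsilon_1}}\subseteq\C U_{\overline{\epsilon}}$ is equivalent to $\C\subseteq U_{-1,\overline{\epsilon_1}}U_{\overline{-\epsilon}}$, and taking inverses (and using $U_{-1,\overline{\epsilon_1}}^{-1}=U_{-1,\overline{-\epsilon_1}}$) turns this into $\C\subseteq U_{\overline{\epsilon}}U_{-1,\overline{-\epsilon_1}}$, to which \Cref{product_of_unipotent_negative_unipotent_class_SL}(1) applies and yields the condition $\overline{\tr(\C)+2}=\overline{\epsilon\epsilon_1}$.

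The main obstacle will be bookkeeping rather than substance: I must be careful with the signs of $\epsilon$ and $\epsilon_1$ modulo $k^{\times 2}$ when passing through the reality relations $U_{\overline{\epsilon}}^{-1}=U_{\overline{-\epsilon}}$ and $U_{-1,\overline{\epsilon}}^{-1}=U_{-1,\overline{-\epsilon}}$, and I must verify that each previously cited proposition actually covers the configuration I arrive at (including the borderline cases $\overline{\epsilon_1}=\overline{-\epsilon}$ and $\tr(\C)=0$). Once these signs are tracked uniformly, the five items drop out with no further computation.
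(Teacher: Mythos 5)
Your proof is correct and follows essentially the same route as the paper's: each containment is translated, via conjugation-invariance of class products and the reality relations of \Cref{reality_properties_SL2}, into a containment of the form $\C \subseteq \C_1\C_2$ already settled by the earlier propositions. The only adjustment worth making is in item (4): cite \Cref{semisimple_in_product of_distinct_unipotents} directly (as the paper does) rather than \Cref{unipotent_square_description}, whose $|k|>5$ hypothesis is not assumed in the present proposition and is not needed for the semisimple part of that description.
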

	
	\begin{proof}
		Let $x\in \C$ and $y\in U_{\overline{\epsilon}}$ be such that $xy=\pm I$. Then $\tr(x)=\pm \mathsf{tr(y)}$. But then $y$ is semisimple, which is a contradiction. This proves $(1)$. Note that $\C U_{\overline{\epsilon}} \supseteq \C \Leftrightarrow U_{\overline{\epsilon}} \subseteq \C^2$. By \Cref{square_semisimple_class_SL}, the latter statement holds true if and only if $\C$ is split semisimple. This proves $(2)$. 
		Note that $\C U_{\overline{\epsilon}} \supseteq \C' \Leftrightarrow U_{\overline{\epsilon}} \subseteq \C\C'$; by \Cref{product_distinct_semisimple_class_SL}, the latter statement holds true. This proves $(3)$. Note that $\C U_{\overline{\epsilon}}\supseteq U_{\overline{\epsilon_1}} \Leftrightarrow \C\subseteq U_{\overline{-\epsilon}}U_{\overline{\epsilon_1}}$ (see \Cref{reality_properties_SL2}(2)). Using \Cref{semisimple_in_product of_distinct_unipotents}, we conclude that $\overline{\tr(\C)-2}=\overline{\epsilon\epsilon_1}$ in $k/k^{\times 2}$. This proves $(4)$. Again, $\C U_{\overline{\epsilon}} \supseteq U_{-1,\overline{\epsilon_1}} \Leftrightarrow \C \subseteq U_{\overline{-\epsilon}} U_{-1,\overline{\epsilon_1}}$; by \Cref{product_of_unipotent_negative_unipotent_class_SL}, the latter statement holds true if and only if $\overline{\tr(\C)+2}=\overline{\epsilon \epsilon_1} \in k^{\times}/k^{\times 2}$. This proves $(5)$. 
	\end{proof}
	
	We omit the proof of the next proposition since it can be obtained from the previous results, using similar arguments as in the proof of the previous proposition. 
	\begin{proposition}\label{semisimple_times_negative_unipotent}
		Let $k$ be a field with $u(k)\leq 2$. Let $\overline{\epsilon}\in k^{\times}/k^{\times 2}$ and $\C$ be a semisimple conjugacy class in $\SL_2(k)$. Then the followings hold:
		\begin{enumerate}
			\item $\C U_{-1,\overline{\epsilon}}$ does not contain $\pm I$. 
			\item $\C U_{-1,\overline{\epsilon}}$ contains $\C$ if and only if $\tr(\C)\neq 0$ or $-1\in k^{\times 2}$.
			\item For a semisimple conjugacy class $\C'$ with $\C'\neq \C$, $\C U_{-1,\overline{\epsilon}}$ contains $\C'$ 
			if and only if $\tr(\C)\neq-\tr(\C')$, or at least one of $\C$ or $\C'$ is split semisimple.
			\item For a unipotent conjugacy class $U_{\overline{\epsilon_1}}$, $\C U_{-1,\overline{\epsilon}}$ contains $U_{\overline{\epsilon_1}}$ if and only if $\overline{\tr(\C)+2} =\overline{\epsilon \epsilon_1}\in k^{\times}/k^{\times 2}$. 
			\item For a negative unipotent conjugacy class $U_{-1,\overline{\epsilon_1}}$, $\C U_{-1,\overline{\epsilon}}$  contains $U_{-1,\overline{\epsilon_1}}$ if and only if $\overline{\tr(\C)-2} =\overline{\epsilon \epsilon_1}\in k^{\times}/k^{\times 2}$. 
		\end{enumerate}
	\end{proposition}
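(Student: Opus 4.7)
The plan is to mirror the proof of \Cref{semisimple_times_unipotent}, exploiting the fact that $U_{-1,\overline{\epsilon}}^{-1}=U_{-1,\overline{-\epsilon}}$ (by \Cref{reality_properties_SL2}(2)) and the reality of semisimple classes (\Cref{reality_properties_SL2}(1)). The basic observation is that, since $\C U_{-1,\overline{\epsilon}}$ is closed under conjugation, a conjugacy class $X$ is contained in $\C U_{-1,\overline{\epsilon}}$ if and only if some $w\in X$ can be written as $xy$ with $x\in\C$ and $y\in U_{-1,\overline{\epsilon}}$. Rearranging this either as $y=x^{-1}w\in \C^{-1}X$ or $x=wy^{-1}\in X\cdot U_{-1,\overline{-\epsilon}}$ reduces each statement to a membership question already settled in the excerpt.

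\medskip

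Statement (1) is immediate: if $xy=\pm I$ with $x\in\C$ semisimple and $y\in U_{-1,\overline{\epsilon}}$, then $y=\pm x^{-1}$ is semisimple, contradicting that $y$ is negative unipotent. For (2), taking $X=\C$ and using $\C^{-1}=\C$ reduces the condition to $U_{-1,\overline{\epsilon}}\subseteq \C^2$, and \Cref{square_semisimple_class_SL} says $\C^2$ misses the negative unipotents precisely when $\C$ is non-split semisimple with $\tr(\C)=0$, i.e.\ precisely when $\tr(\C)=0$ and $-1\notin k^{\times 2}$. For (3), taking $X=\C'\neq \C$ and again using reality gives the equivalent question $U_{-1,\overline{\epsilon}}\subseteq \C\C'$, and \Cref{product_distinct_semisimple_class_SL} shows this fails precisely when $\tr(\C)=-\tr(\C')$ and both $\C$ and $\C'$ are non-split semisimple.

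\medskip

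For (4), taking $X=U_{\overline{\epsilon_1}}$ and rearranging $x=wy^{-1}$ yields the equivalent condition $\C\subseteq U_{\overline{\epsilon_1}}\,U_{-1,\overline{-\epsilon}}$. Invoking \Cref{product_of_unipotent_negative_unipotent_class_SL}(1) then gives $-2-\tr(\C)\in \epsilon_1(-\epsilon)k^{\times 2}$, which simplifies to $\overline{\tr(\C)+2}=\overline{\epsilon\epsilon_1}$ in $k^{\times}/k^{\times 2}$. For (5), taking $X=U_{-1,\overline{\epsilon_1}}$, the analogous rearrangement gives $\C\subseteq U_{-1,\overline{\epsilon_1}}\,U_{-1,\overline{-\epsilon}}$. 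When $\overline{\epsilon_1}\neq\overline{-\epsilon}$, \Cref{product_of_two_distinct_negative_unipotents}(1) yields $2-\tr(\C)\in \epsilon_1(-\epsilon)k^{\times 2}$; when $\overline{\epsilon_1}=\overline{-\epsilon}$, \Cref{negative_unipotent_square_description}(1) yields $2-\tr(\C)\in k^{\times 2}$. Since $\overline{-\epsilon\epsilon_1}=\overline{1}$ in this latter case, both sub-cases collapse to the uniform condition $\overline{\tr(\C)-2}=\overline{\epsilon\epsilon_1}$.

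\medskip

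The argument is almost entirely bookkeeping; the one delicate point will be the case distinction in (5) depending on whether $\overline{\epsilon_1}=\overline{-\epsilon}$, where one must appeal to the square-of-a-class description rather than the distinct-class description. Once this is handled, the statement follows by direct substitution into the results already catalogued in the excerpt.
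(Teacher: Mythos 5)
Your proof is correct and is exactly the argument the paper intends: the paper omits the proof of this proposition, stating only that it follows from the preceding results by arguments similar to those for \Cref{semisimple_times_unipotent}, and your rearrangements $y=x^{-1}w$ and $x=wy^{-1}$ combined with \Cref{reality_properties_SL2} are precisely that, including the correct case split in (5) according to whether $\overline{\epsilon_1}=\overline{-\epsilon}$. The only formal quibble is that \Cref{negative_unipotent_square_description} carries the hypothesis $|k|>5$, but its part (1) — the only part you invoke — rests on a trace computation valid over any field, so nothing is lost.
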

	
	\begin{proof}[\textbf{Proof of \Cref{semisimple_times_unipotent_PSL}}]
		Arguing as in the proof of \Cref{square_semisimple_class_PSL} and \Cref{product_of_distinct_unipotent_class_PSL}, the proof follows  from the above two propositions.
	\end{proof}
	
	\section{Product of three distinct conjugacy classes in $\SL_2(k)$ and $\psl_2(k)$}\label{product_of_three_classes}
	
	In this section we prove \Cref{product_of_three_classes_PSL}. We have the following.
	
	\begin{proposition}\label{product_of_three_proposition}
		Let $k$ be a field with $u(k)\leq 2$. Let $G:=\SL_2(k)$ and let $\C_1,\C_2,\C_3$ be three conjugacy classes in $G$ with at least two of them distinct. Then $G\setminus \{\pm I\}\subseteq \C_1\C_2\C_3$.
	\end{proposition}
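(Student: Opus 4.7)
The plan is to show that every non-central conjugacy class $\mathcal{D}$ of $G$ is contained in $\C_1\C_2\C_3$. Since $\C_1\C_2\C_3$ is a union of conjugacy classes and is invariant under cyclic permutation of its factors (if $g=c_1c_2c_3$ then $c_1^{-1}gc_1=c_2c_3c_1\in \C_2\C_3\C_1$, and both sides are conjugation-invariant), we may assume without loss of generality that $\C_1\neq \C_2$.

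The central observation is the decomposition
\[
\C_1\C_2\C_3 \;=\; \bigcup_{\C\,\subseteq\,\C_1\C_2} \C\cdot\C_3,
\]
where the union runs over all conjugacy classes $\C$ contained in $\C_1\C_2$. Thus it suffices to produce, for each non-central $\mathcal{D}$, a conjugacy class $\C\subseteq\C_1\C_2$ with $\mathcal{D}\subseteq \C\cdot\C_3$. Both $\C_1\C_2$ and $\C\cdot\C_3$ are described completely by the results of Section~\ref{product_of_two_classes}, so the argument reduces to a finite case analysis on the types of $\C_1,\C_2,\C_3$ (semisimple, unipotent, or negative unipotent) using Propositions~\ref{square_semisimple_class_SL}--\ref{semisimple_times_negative_unipotent} and Corollary~\ref{product_semisimple_classes_contain_semisimple}.

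The easier case is when at least one of $\C_1,\C_2$ is semisimple: combining Corollary~\ref{product_semisimple_classes_contain_semisimple} with Propositions~\ref{semisimple_times_unipotent} and \ref{semisimple_times_negative_unipotent}, one sees that $\C_1\C_2$ contains every non-central semisimple conjugacy class of $G$. For any target $\mathcal{D}$ and any $\C_3$, one may then select a semisimple $\C\subseteq\C_1\C_2$ whose trace lies in the square-class coset dictated by the description of $\C\cdot\C_3$ in Propositions~\ref{product_distinct_semisimple_class_SL}, \ref{semisimple_times_unipotent}, or \ref{semisimple_times_negative_unipotent}; such a $\C$ always exists since $\tr(\C)$ ranges freely over $k\setminus\{\pm 2\}$, and the bookkeeping on central and exceptional elements shows $\mathcal{D}\subseteq \C\cdot\C_3$.

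The main obstacle is the case when neither $\C_1$ nor $\C_2$ is semisimple, so both are of unipotent or negative unipotent type. In this regime $\C_1\C_2$ contains semisimple classes only with $\tr(\C)$ lying in a single prescribed coset of $k^\times/k^{\times 2}$ (Propositions~\ref{product_of_two_distinct_unipotents}, \ref{product_of_two_distinct_negative_unipotents}, \ref{product_of_unipotent_negative_unipotent_class_SL}), so the choice of semisimple $\C$ is constrained. When $\C_3$ is also non-semisimple and $\mathcal{D}$ is a unipotent or negative unipotent target, requiring $\mathcal{D}\subseteq\C\cdot\C_3$ for semisimple $\C$ imposes a second coset restriction on $\tr(\C)$, and matching the two reduces to solving an equation of the form $\alpha s^2+\beta t^2=\gamma$ in $k^\times\times k^\times$ for explicit $\alpha,\beta,\gamma\in k^\times$. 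The hypothesis $u(k)\leq 2$ makes the ternary quadratic form $\alpha X^2+\beta Y^2-\gamma Z^2$ isotropic, so $\alpha X^2+\beta Y^2$ represents $\gamma$; the distinctness $\C_1\neq\C_2$ forces one coordinate of any affine solution to be automatically nonzero, and the remaining degenerate vanishing possibility corresponds to a single exceptional target class, which is covered by choosing $\C\subseteq\C_1\C_2$ to be a unipotent or negative unipotent class (rather than semisimple) and invoking Propositions~\ref{unipotent_square_description}(3), \ref{negative_unipotent_square_description}(3), or \ref{product_of_unipotent_negative_unipotent_class_SL}(2). Assembling all sub-cases and bookkeeping the exceptional small-field situations via Macbeath's Theorem~\ref{trace_result} establishes the proposition.
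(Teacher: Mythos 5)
Your overall strategy coincides with the paper's: write $\C_1\C_2\C_3=\bigcup_{\C\subseteq\C_1\C_2}\C\,\C_3$, run a case analysis on the types of the three classes using the two-class descriptions of Section~\ref{product_of_two_classes}, and in the all-(negative-)unipotent case use isotropy of a ternary form (from $u(k)\leq 2$) to reconcile the two square-class constraints on $\tr(\C)$, treating degenerate cosets by routing through a non-semisimple intermediate class. The cyclic-permutation reduction to $\C_1\neq\C_2$ is a tidy organizational device the paper does not state but effectively uses.

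Two of your concrete claims, however, do not hold as written, and they sit exactly where the real work is. First, when one of $\C_1,\C_2$ is semisimple it is \emph{not} true that $\C_1\C_2$ contains every non-central semisimple class: by \Cref{semisimple_times_unipotent}(2) a non-split semisimple $\C$ is absent from $\C\,U_{\overline{\epsilon}}$, and by \Cref{semisimple_times_negative_unipotent}(2)--(3) both $\C$ and the class of trace $-\tr(\C)$ can be absent from $\C\,U_{-1,\overline{\epsilon}}$; one must check that the remaining classes still give enough freedom, which is a nontrivial part of the paper's argument. Second, and more seriously, in the case where exactly one class is semisimple and the other two are (negative) unipotent, $\C_2\C_3$ contains semisimple classes of only one square-class coset of $2-\tr$, and the number of such classes can be as small as two; avoiding the exceptional classes of the second product then requires a genuine counting argument, which the paper carries out separately for $|k|>7$ and $|k|=7$. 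Your proposal to close the small-field cases ``via Macbeath's \Cref{trace_result}'' cannot work: that theorem controls which \emph{traces} occur in a product, whereas the obstruction here lives in $k^{\times}/k^{\times 2}$. Indeed the conclusion genuinely fails for $|k|=5$ at the level of $\SL_2(k)$ (for $k=\mathbb{F}_5$, $\C_1$ the non-split class of trace $1$ and $\C_2=\C_3=U_{\bar{1}}$, one checks that $U_{\bar{1}}^3$ contains no semisimple element of trace $1$, so $U_{\bar{1}}\not\subseteq\C_1\C_2\C_3$); the paper's own proof invokes $|k|>5$ at this step and recovers $\mathbb{F}_5$ only for $\psl_2$ in a separate proposition. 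Any complete write-up must isolate and handle these small fields rather than fold them into the generic bookkeeping.
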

	
	\begin{proof}
		We first consider the case when $\C_1,\C_2, \C_3$ are all semisimple classes. Assume that $\C_1 \neq \C_2$. Then by \Cref{product_distinct_semisimple_class_SL}, $\C_1 \C_2$ contains all unipotent and semisimple elements. Let $\C$ be a split semisimple class contained in $\C_1 \C_2$. Then by \Cref{product_distinct_semisimple_class_SL}, $G\setminus \{\pm I \} \subseteq \C \C_3 \subseteq \C_1\C_2\C_3$.
		
		\medskip
		
		Now let us consider the case when $\C_1,\C_2$ are semisimple and $\C_3$ is unipotent. By using \Cref{semisimple_times_unipotent}(3), we conclude that $\C_2\C_3$ contains all semisimple classes $\C$ with $\C \neq \C_2$, and by \Cref{semisimple_times_unipotent}(2), we have $\C_2\C_3$ contains $\C_2$ if and only if $\C_2$ is split semisimple. First assume that $\C_1 = \C_2$ and they are split semisimple. Then ${\C_2}^2 \subseteq \C_1\C_2\C_3$, and therefore by \Cref{square_semisimple_class_SL}, $G\setminus \{-I\} \subseteq \C_1\C_2\C_3$. Now assume that $\C_1 = \C_2$ and they are non-split semisimple. By \Cref{semisimple_times_unipotent}(3), $\C_2\C_3$ contains all split semisimple classes. Let $\C$ be such a class. By \Cref{product_distinct_semisimple_class_SL}, we have $G\setminus \{\pm I \} \subseteq \C_1 \C \subseteq \C_1\C_2\C_3$. Now we assume that $\C_1 \neq \C_2$. By \Cref{product_distinct_semisimple_class_SL} and \Cref{unipotents_in_product_of_distinct_unipotents}, $\C_1\C_2\C_3$ contains all unipotent elements. Also, by using \Cref{semisimple_times_unipotent}(3), we have $\C_1 \subseteq \C_2\C_3$, whence ${\C_1}^2 \subseteq \C_1\C_2\C_3$. Hence, by \Cref{square_semisimple_class_SL}, we have $G\setminus \{\text{negative unipotent elements}\} \subseteq \C_1\C_2\C_3$ if $\tr(\C_1)=0$ and $\C_1$ is non-split semisimple, and $G\setminus \{-I\} \subseteq \C_1\C_2\C_3$ otherwise. Now, if $\tr(\C_1)=0$ then $\tr(\C_2) \neq 0$, whence by \Cref{product_distinct_semisimple_class_SL}, we conclude that $\C_1\C_2$ contains all negative unipotent elements. Thus, using \Cref{product_of_unipotent_negative_unipotent_class_SL}(3), $\C_1\C_2\C_3$ contains all negative unipotent elements. Hence, $G\setminus \{-I\} \subseteq \C_1\C_2\C_3$. The case when $\C_1,\C_2$ are semisimple classes and $\C_3$ is a negative unipotent class follows similarly.
		
		\medskip
		
		Let us now assume that only one of $\C_1,\C_2,\C_3$ is semisimple and let that class be $\C_1$. We also assume that $\C_2,\C_3$ are unipotent classes. Let $\C_2 =U_{\overline{\epsilon_1}}$ and $\C_3 =U_{\overline{\epsilon_2}}$ for some $\overline{\epsilon_1}, \overline{\epsilon_2} \in k^{\times}/k^{\times 2}$. Using \Cref{semisimple_in_product of_distinct_unipotents}, we conclude that if $\C$ is a semisimple class then $\C \subseteq \C_2\C_3$ if and only if $2-\tr(\C)\in \epsilon_1\epsilon_2k^{\times 2}$. Since $|k|>5$, there are at least three squares in $k^{\times}$, and hence there is at least one semisimple class $\C'$ with $\C' \neq \C_1$ and $\C'\subseteq \C_2\C_3$. If either $\C_1$ or $\C'$ is split semisimple then by \Cref{product_distinct_semisimple_class_SL}, $G\setminus \{\pm I\} \subseteq \C_1\C' \subseteq \C_1\C_2\C_3$. Now assume that $\C_1$ is non-split semisimple and $\C_2\C_3$ contains no split semisimple classes. If $|k|>7$, then there are at least three distinct non-split semisimple classes in $\C_2\C_3$. Thus, by \Cref{product_distinct_semisimple_class_SL}, $G\setminus \{\pm I\} \subseteq \C_1\C' \subseteq \C_1\C_2\C_3$. Also if $|k|=7$ and $\overline{\epsilon_1} \neq \overline{\epsilon_2}$, then there are three distinct choices for non-split semisimple classes $\C$ such that $2-\tr(\C)\in \epsilon_1\epsilon_2k^{\times 2}$. Hence, again by \Cref{product_distinct_semisimple_class_SL}, $G\setminus \{\pm I\} \subseteq \C_1\C' \subseteq \C_1\C_2\C_3$. Finally if $|k|=7$ and $\overline{\epsilon_1} = \overline{\epsilon_2}$, then the choices for trace of non-split semisimple classes $\C$ such that $2-\tr(\C)\in \epsilon_1\epsilon_2k^{\times 2}$ are $0$ and $1$. Thus, there exists a non-split semisimple class $\C \subseteq \C_2\C_3$ such that $\tr(\C) \neq \pm \tr(\C_1)$. By \Cref{product_distinct_semisimple_class_SL}, $G\setminus \{\pm I\} \subseteq \C_1\C' \subseteq \C_1\C_2\C_3$. The case when $\C_2,\C_3$ are negative unipotent classes, or $\C_2$ is a unipotent class and $\C_3$ is  a negative unipotent class follows similarly. 
		
		\medskip
		
		Finally, let us assume that none of $\C_1,\C_2$, and $\C_3$ is a semisimple class. We consider the case when all three classes are unipotent. The other cases follow similarly. Let $\C_1 =U_{\overline{\epsilon_1}}$, $\C_2 =U_{\overline{\epsilon_2}}$ and $\C_3 =U_{\overline{\epsilon_3}}$ for some $\overline{\epsilon_1}, \overline{\epsilon_2}$, and $ \overline{\epsilon_3} \in k^{\times}/k^{\times 2}$. We can also assume that $\overline{\epsilon_1} \neq \overline{\epsilon_2}$. Using \Cref{product_of_two_distinct_unipotents}(2), we conclude that $U_{\overline{\epsilon_1}} U_{\overline{\epsilon_2}}$ contains all unipotent elements, whence by \Cref{product_of_two_distinct_unipotents}(2) once again, $U_{\overline{\epsilon_1}} U_{\overline{\epsilon_2}} U_{\overline{\epsilon_3}}$ contains all unipotent elements. Let $\C$ be a semisimple class. By \Cref{semisimple_in_product of_distinct_unipotents}, $\C \subseteq \C_1\C_2$ if and only if $2-\tr(\C)\in \epsilon_1\epsilon_2k^{\times 2}$. If $|k|\geq 5$, there are at least two such classes. Let $\D_1$ and $\D_2$ be two semisimple classes such that $\D_1,\D_2\subseteq \C_1\C_2$. By \Cref{semisimple_times_unipotent}(3), note that $\D_1\C_3$  (resp. $\D_2\C_3$) contains all semisimple classes except possibly $\D_1$ (resp. $\D_2$). We conclude that  $\C_1\C_2\C_3$ contains all semisimple elements. Finally we show that every negative unipotent element is contained in $\C_1\C_2\C_3$. We first assume that $\overline{\epsilon_2} = \overline{\epsilon_3}$. In this case, by \Cref{trace_negative_two_in_unipotent_square}(3) we have $U_{-1,\overline{\epsilon_3}} \subseteq \C_2\C_3$ and hence by \Cref{product_of_unipotent_negative_unipotent_class_SL}(3), $\C_1\C_2\C_3$ contains all negative unipotent elements. Now let us assume that $\overline{\epsilon_1}, \overline{\epsilon_2}$, and $\overline{\epsilon_3}$ are all distinct. Let $\C$ be a semisimple class such that $2-\tr(\C)\in \epsilon_1\epsilon_2k^{\times 2}$ or, in other words, $\tr(\C) = 2- \epsilon_1\epsilon_2 a^2$ for some $a \in k^{\times}$. By \Cref{product_of_two_distinct_unipotents}(1), $\C\subseteq \C_1\C_2$. By \Cref{semisimple_times_unipotent}(5), $\C U_{\overline{\epsilon_3}}$ contains a negative unipotent class $U_{-1,\overline{\epsilon}}$ if and only if $2 + \tr(\C) = 4-\epsilon_1\epsilon_2 a^2 \in \epsilon \epsilon_3 k^{\times 2}$. Equivalently, $U_{-1,\overline{\epsilon}}\subseteq \C U_{\overline{\epsilon_3}}$ if and only if  $1-\epsilon_1\epsilon_2 a^2 = \epsilon \epsilon_3 b^2$ for some $b \in k^{\times}$. Consider the quadratic form $Q(x,y,z)=x^2-\epsilon_1\epsilon_2 y^2-\epsilon \epsilon_3 z^2$. Since $u(k)\leq 2$, we conclude that there exists $a, b \in k^{\times}$ such that $1-\epsilon_1\epsilon_2 a^2 = \epsilon \epsilon_3 b^2$, except for the case when $\overline{\epsilon} = \overline{\epsilon_3}$ or $\overline{\epsilon} = \overline{-\epsilon_1\epsilon_2\epsilon_3}$. Thus, if $\overline{\epsilon} \neq \overline{\epsilon_3}$ and $\overline{\epsilon} \neq \overline{-\epsilon_1\epsilon_2\epsilon_3}$ then $U_{-1,\overline{\epsilon}} \subseteq \C \C_3 \subseteq \C_1\C_2\C_3$. For $\overline{\epsilon} = \overline{\epsilon_3}$, by \Cref{trace_negative_two_in_unipotent_square}(3), $U_{-1,\overline{\epsilon_3}} \subseteq U_{\overline{\epsilon_3}}^2 \subseteq \C_1\C_2\C_3$. Finally, if $\epsilon = -\epsilon_1\epsilon_2\epsilon_3 c^2$ for some $c \in k^{\times}$, then we can assume that $\overline{\epsilon} \neq \overline{\epsilon_3}$ and hence $\overline{\epsilon_1} \neq \overline{-\epsilon_2}$. Thus, for $a=\frac{{\epsilon_1}^{-1}+{\epsilon_2}^{-1}}{2} \in k^{\times}$ and $b=\frac{{\epsilon_1}^{-1} -{\epsilon_2}^{-1}}{2\epsilon_3 c} \in k^{\times}$, we have $1-\epsilon_1\epsilon_2 a^2 = \epsilon \epsilon_3 b^2$. Hence $\C_1\C_2\C_3$ contains all negative unipotent elements and thus $G\setminus \{\pm I\} \subseteq \C_1\C_2\C_3$.
	\end{proof}

	\begin{proposition}
	Let $k=\F_5$. For $i=1,2,3$, let $x_i$ be a non-central element of $\SL_2(k)$. 
	Let $\C_i$ be the conjugacy class of $\overline{x_i}$ in $\psl_2(k)$ such that at least two of them are distinct. Then $$\C_1 \C_2 \C_3 \supseteq \psl_2(k)\setminus \{1\}.$$   
	\end{proposition}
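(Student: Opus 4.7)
The plan is to reduce the claim to the two-class product theorems from the previous section via a short case analysis, exploiting that $\psl_2(\F_5)$ has only four non-identity conjugacy classes.

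First I would enumerate these classes. Since $\F_5^{\times}/\F_5^{\times 2}=\{\bar{1},\bar{2}\}$ and $-1=4=2^2\in\F_5^{\times 2}$, the image in $\psl_2(\F_5)$ of each negative-unipotent class coincides with that of a unipotent class. Consequently, by \Cref{table1}, the non-identity classes are exactly the two unipotent classes $\bar{U}_{\bar{1}},\bar{U}_{\bar{2}}$, one split semisimple class $\mathcal{S}_0$ of trace $0$, and one non-split semisimple class $\mathcal{S}_1$ (the $\SL_2$-classes of trace $\pm 1$ merge in $\psl_2(\F_5)$).

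Next I would tabulate the relevant two-class products using the main theorems: \Cref{square_semisimple_class_PSL} and \Cref{product_distinct_semisimple_class_PSL} give $\mathcal{S}_0^2=\mathcal{S}_1^2=G$ and $\mathcal{S}_0\mathcal{S}_1=G\setminus\{1\}$; \Cref{product_of_distinct_unipotent_class_PSL} gives $\bar{U}_{\bar{1}}\bar{U}_{\bar{2}}=G\setminus\{1\}$ and $\bar{U}_{\bar{\epsilon}}^2=G\setminus\mathcal{S}_0$ (the identity is included since $-1\in\F_5^{\times 2}$, while $\mathcal{S}_0$ is excluded because $2\notin\F_5^{\times 2}$); and \Cref{semisimple_times_unipotent_PSL} gives $\mathcal{S}_1\bar{U}_{\bar{\epsilon}}=G\setminus\{1\}$, whereas $\mathcal{S}_0\bar{U}_{\bar{\epsilon}}$ misses both the identity and the class $\bar{U}_{\bar{\epsilon}}$.

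Since $xy$ is conjugate to $yx$ in $G$, products of conjugacy classes commute as sets, so $\C_1\C_2\C_3$ is independent of the ordering of the factors. If some pairing $\C_i\C_j$ contains $G\setminus\{1\}$, then $\C_1\C_2\C_3\supseteq (G\setminus\{1\})\cdot\C_k=G$, since for each $g\in G$ one can choose $y\in\C_k$ with $y\ne g$ (as $|\C_k|>1$) and write $g=(gy^{-1})y$. Thus it suffices to exhibit one \emph{good} pairing inside the triple. From the table, the only \emph{bad} pairings are $\{\mathcal{S}_0,\bar{U}_{\bar{\epsilon}}\}$ and the squares $\bar{U}_{\bar{\epsilon}}^2$. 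A triple of three distinct classes cannot have all pairings bad, since both distinct-class bad pairs contain $\mathcal{S}_0$; and for a triple $\{\D,\D,\D'\}$ with $\D\ne\D'$, every pairing is bad only when $\D=\bar{U}_{\bar{\epsilon}}$ and $\D'=\mathcal{S}_0$.

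The main (and only) obstacle is this residual case $\{\bar{U}_{\bar{\epsilon}},\bar{U}_{\bar{\epsilon}},\mathcal{S}_0\}$. Here I would use the inclusion $\mathcal{S}_1\subseteq\bar{U}_{\bar{\epsilon}}^2$ from the table to conclude that $\bar{U}_{\bar{\epsilon}}^2\cdot\mathcal{S}_0\supseteq\mathcal{S}_1\mathcal{S}_0=G\setminus\{1\}$, which completes the proof.
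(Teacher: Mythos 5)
Your proof is correct; I checked the class enumeration ($\psl_2(\F_5)$ has exactly the four non-identity classes $\bar{U}_{\bar 1},\bar{U}_{\bar 2},\mathcal{S}_0,\mathcal{S}_1$, with the trace $\pm1$ classes of $\SL_2(\F_5)$ fusing) and each entry of your product table against \Cref{square_semisimple_class_PSL}, \Cref{product_distinct_semisimple_class_PSL}, \Cref{product_of_distinct_unipotent_class_PSL} and \Cref{semisimple_times_unipotent_PSL}, all of which the paper does establish for $|k|=5$; in particular $\bar{U}_{\bar\epsilon}^2=G\setminus\mathcal{S}_0$ and $\mathcal{S}_0\bar{U}_{\bar\epsilon}=G\setminus(\{1\}\cup\bar{U}_{\bar\epsilon})$ are right, and the "one good pairing suffices" reduction via $(G\setminus\{1\})\C_k=G$ is sound. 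Your route is, however, genuinely different from the paper's. The paper stays at the $\SL_2(\F_5)$ level: it first disposes of all triples except "exactly one semisimple class" by rerunning the argument of \Cref{product_of_three_proposition}, and then handles the remaining case by an explicit case analysis on $\tr(\D_1)\in\{0,\pm1\}$ and on whether $\overline{\epsilon_2}=\overline{\epsilon_3}$, each time locating a suitable semisimple class inside $U_{\overline{\epsilon_2}}U_{\overline{\epsilon_3}}$ via \Cref{semisimple_in_product of_distinct_unipotents} and finishing with \Cref{square_semisimple_class_SL} or \Cref{product_distinct_semisimple_class_SL}. You instead work entirely in $\psl_2(\F_5)\cong A_5$, exhaustively tabulate all seven pairwise products from the already-proved PSL-level theorems, and reduce to the single residual triple $\{\bar{U}_{\bar\epsilon},\bar{U}_{\bar\epsilon},\mathcal{S}_0\}$, which you resolve by $\mathcal{S}_1\subseteq\bar{U}_{\bar\epsilon}^2$ and $\mathcal{S}_1\mathcal{S}_0=G\setminus\{1\}$ --- the same idea the paper uses in its corresponding subcase, but one level up. What your version buys is transparency and self-containment: the combinatorial argument makes visible exactly which pairings are deficient and why only one triple is problematic, and it yields the stronger conclusion $\C_1\C_2\C_3=G$ whenever a good pairing exists. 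What the paper's version buys is uniformity with the general-field argument of \Cref{product_of_three_proposition} and direct access to the finer $\SL_2$-level information (e.g.\ which of $\pm I$ lies in the triple product), at the cost of a longer trace-by-trace case analysis.
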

	
\begin{proof}

Let $\D_i$ be the conjugacy class of $x_i$ in $\SL_2(k)$. 
Suppose that either none of $\D_i$ is semisimple, or at least two of them are semisimple. 
Then proceeding as in the proof of \Cref{product_of_three_proposition}, we conclude that $\D_1 \D_2 \D_3 \supseteq \SL_2(k)\setminus \{\pm I\}$. 
Hence $\C_1 \C_2 \C_3 \supseteq \psl_2(k)\setminus \{1\}$.

Now suppose that exactly one of $\D_i$ (say $\D_1$) is semisimple. 
We can assume that $\D_2$ and $\D_3$ are unipotent conjugacy classes. 
Let $\D_2=U_{\overline{\epsilon_2}}$ and $\D_3=U_{\overline{\epsilon_3}}$ for some $\overline{\epsilon_2}$, $\overline{\epsilon_3}\in k^{\times}/k^{\times 2}$. 
We first consider the case when $\mathsf{tr}(\D_1)=0$. In this case, $\D_1$ is split semisimple. 
Let $\D$ be a semisimple class. Then by \Cref{semisimple_in_product of_distinct_unipotents}, $\D\subseteq U_{\overline{\epsilon_2}} U_{\overline{\epsilon_3}}$ if and only if $2-\mathsf{tr}(\D)\in \epsilon_2 \epsilon_3 k^{\times 2}$. 
If $\overline{\epsilon_2}=\overline{\epsilon_3}$ (resp. $\overline{\epsilon_2}\neq \overline{\epsilon_3}$) then the non-split conjugacy class $\D$ of trace $1$ (resp. $-1$) is contained in $U_{\overline{\epsilon_2}} U_{\overline{\epsilon_3}}$. 
Thus, by \Cref{product_distinct_semisimple_class_SL}(1), $\SL_2(k)\setminus \{\pm I\}=\D_1 \D \subseteq \D_1 \D_2 \D_3$. 
Therefore, $\psl_2(k)\setminus \{1\} \subseteq \C_1 \C_2 \C_3$. 
Now suppose that $\mathsf{tr}(\D_1)=\pm 1$. 
It is enough to consider the case when $\mathsf{tr}(\D_1)=1$. 
Here $\D_1$ is non-split semisimple. 
If $\overline{\epsilon_2}=\overline{\epsilon_3}$ then by \Cref{semisimple_in_product of_distinct_unipotents}, $\D_1 \subseteq U_{\overline{\epsilon_2}} U_{\overline{\epsilon_3}}.$ 
Thus, by \Cref{square_semisimple_class_SL}(2), $\SL_2(k)\setminus \{\{-I\}\cup \{ unipotents \} \} = \D^2_1 \subseteq \D_1 \D_2 \D_3$. 
If $\overline{\epsilon_2}\neq \overline{\epsilon_3}$ then by \Cref{semisimple_in_product of_distinct_unipotents}, $-\D_1 \subseteq U_{\overline{\epsilon_2}} U_{\overline{\epsilon_3}}$, where 
$-\D_1$ is the conjugacy class of $-x_1$ in $\SL_2(k)$. 
Thus by \Cref{product_distinct_semisimple_class_SL}(2), $\SL_2(k) \setminus \{\{I\}\cup \{negative \; unipotents \} \}= \D_1 (-\D_1) \subseteq \D_1 \D_2 \D_3$. 
Hence, in this case, $\psl_2(k) =\C_1 \C_2 \C_3$.

\end{proof}

	\begin{proof}[\bf{Proof of \Cref{product_of_three_classes_PSL}}]
		The proof follows from the above two propositions.
	\end{proof}
	\section{A remark on commutators in $\SL_2(k)$ and $\psl_2(k)$}\label{commutator_map}
	
	The Ore conjecture, which states that every element in a finite simple group is a commutator is now a theorem (see \cite{lost}). For a finite group $G$ and $g\in G$, we say $g$ is a coprime commutator if $g=[x,y]$ for some $x,y \in G$ with $(|x|,|y|)=1$. Let $G$ be a finite simple group of Lie type over a finite field $\F_q$, where $q=p^{a}$ for some prime $p$ and $a\geq 1$. Then an element $g\in G$ is unipotent (resp. semisimple) if and only if $|g|=p^b$ for some $b\geq 1$ (resp. $(p,|g|)=1$). Shumyatsky in \cite{sh} proved that every element of the alternating group $A_n$ ($n\geq 5)$ is a coprime commutator and conjectured the same for any finite simple group. In \cite{ps}, Shumyatsky and Pellegrini showed that every element in $\psl_2(q)$ ($q\geq 4$) is a coprime commutator. In fact  for $\psl_2(q)$, they proved that every element is a commutator of an involution (that is, an element of order two) and an odd order element (see proofs of \cite[Theorem 1.1 \& Theorem 1.2]{ps}). 
	
	\medskip
	
	When $q$ is even, note that any involution is a unipotent element and any odd order element is semisimple, hence any element in $\psl_2(q)$ can be written as a commutator of a unipotent element and a semisimple element. It is natural to ask whether the same holds for $\psl_2(q)$ when $q$ is odd, and more generally for any finite simple group of Lie type. Unfortunately, the answer to this question is negative, although the next theorem shows that this holds for $\psl_2(k)$, when $k$ is quadratically closed.
	
	\begin{proposition}\label{semisimple_unipotent_commutator_SL_quad_closed}
		Let $k$ be a quadratically closed field. Then every element of $\SL_2(k)\setminus \{-I\}$ can be written as a commutator of a unipotent element and a semisimple element.
	\end{proposition}
	
	\begin{proof}
		Let $G:=\SL_2(k)$. Note that there is a single conjugacy class of unipotent elements  in $G$. Call it $U$. Consider $g\in U^2\setminus \{I\}$ such that $g=uxu^{-1}x^{-1}$, where $u=\begin{pmatrix}1 & 1 \\ 0 & 1\end{pmatrix}$ and $x :=\begin{pmatrix}a & b \\ c& d \end{pmatrix}\in G\setminus \Z_{G}(u)$. Note that 
		$$\Z_{G}(u)=\Z_{G}(u^{-1})=\left\{\begin{pmatrix}\alpha & \beta \\ 0 & \alpha^{-1}\end{pmatrix}\mid \alpha\in k^{\times}, \beta \in k\right\}.$$
		If $x$ is semisimple we are done. If not then $\tr(x)=\pm 2$. We have $a+d=\pm 2$ and since $x\notin \Z_{G}(u^{-1})$, we get that $c\neq 0$. We have
		$$x\Z_{G}(u^{-1})=\left\{\begin{pmatrix} a\alpha & a\beta+b\alpha^{-1} \\ c\alpha & c\beta+d\alpha^{-1} \end{pmatrix}\mid \alpha\in k^{\times}, \beta\in k\right\}.$$
		The trace of any element in $x\Z_{G}(u^{-1})$ is $a\alpha+c\beta+d\alpha^{-1}$. By an appropriate choice of $\alpha,\beta\in k$ (we can take $\alpha=1$ and $\beta\neq 0,\pm 4{c}^{-1}$), we see that there exists an element $y \in x\Z_{G}(u^{-1})$ such that $\tr(y)\neq \pm 2$, whence $y$ is semisimple. We conclude that $g=[u,y]$. If $g$ is a commutator of a unipotent element and a semisimple element then any conjugate of $g$ is so, and hence using \Cref{prep_lemma_2}, we conclude that every non-identity element of $U^2$ is a commutator of a unipotent element and a semisimple element. By \cite[Corollary 5.2]{vw}, $U^2=G\setminus \{-I\}$ and hence the proof follows for all elements of $G\setminus\{-I\}$ except for $I$. Finally, due to Jordan decomposition, there always exists a semisimple and a unipotent element that commutes, whence the result holds for $I$ as well.
	\end{proof}
	
	\begin{theorem}\label{semisimple_unipotent_commutator_PSL_quad_closed}
		Let $k$ be a quadratically closed field. Then every element of $\psl_2(k)$ can be written as a commutator of a unipotent element and a semisimple element.
	\end{theorem}

	\begin{proof}
		The proof follows from the previous proposition.
	\end{proof}

	\begin{remark}\label{concluding_remark}
		Rimhak Ree (see \cite{re}) proved that every element of a connected semisimple linear algebraic group defined over an algebraically closed field is a commutator. In \Cref{semisimple_unipotent_commutator_SL_quad_closed}, when $k$ is algebraically closed field, then $-I$ cannot be written as a commutator of a unipotent element and a semisimple element. More generally, if $G$ is a connected semisimple linear algebraic group defined over an algebraically closed field and $x$ is a nontrivial central element of $G$, then $x$ cannot be written as a commutator of a unipotent element and a semisimple element. On the contrary, assume that there exists a unipotent element $u\in G$ and a semisimple element $s\in G$ such that $x=sus^{-1}u^{-1}$. Consider $z=xu=sus^{-1}$. Thus $z$ is unipotent. But since $x$ is central and hence semisimple, $z=xu$ is the Jordan decomposition of $z$ with a non-trivial semisimple part, a contradiction. It is unknown to us whether every non-central element of a semisimple linear algebraic group over an algebraically closed field can be written as a commutator of a unipotent element and a semisimple element.
	\end{remark}
	
	As mentioned earlier, the following proposition shows that not every element of $\psl_2(q)$ is a commutator of  a semisimple element and a unipotent element.
	\begin{proposition}
		Let $q>3$ be odd and let $G:=\psl_2(q)$. Then an element $g\in G$ cannot be written as a commutator of a semisimple element and a unipotent element if and only if 
		\begin{enumerate}
			\item $g$ is a semisimple element of $q$-bad order when $q\equiv 1\;(\rm{mod}\;4)$, or 
			
			\item $g$ is a non-split semisimple element of $q$-good order when $q\equiv 3\;(\rm{mod}\;4)$.
		\end{enumerate}
	\end{proposition}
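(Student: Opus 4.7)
The overall strategy is to exploit the rewrite $g = [s,u] = (sus^{-1})u^{-1}$, which forces $g$ to lie in the product $\bar{\C}_u \bar{\C}_u^{-1}$ of the $\psl_2(q)$-unipotent class of $\bar u$ and its inverse class. The necessary direction then follows from \Cref{product_of_distinct_unipotent_class_PSL}, and the sufficient direction from an explicit matrix construction.

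For the necessary direction, \Cref{reality_properties_SL2} shows that inversion sends the $\psl_2(q)$-unipotent class labeled by $\overline{\epsilon}\in\F_q^{\times}/\F_q^{\times 2}$ to the one labeled by $\overline{-\epsilon}$. When $q\equiv 1\pmod{4}$ each class is self-inverse, so $\epsilon_1\epsilon_2$ is a square in \Cref{product_of_distinct_unipotent_class_PSL} and the semisimple part of $\bar{\C}_u^2$ consists precisely of those $\bar x$ with $2\pm\tr(x)$ a square, which (by the characterization used in \Cref{product_distinct_unipotent_finite_fields}) are the semisimples of $q$-good order. When $q\equiv 3\pmod{4}$ the two classes interchange under inversion and $\epsilon_1\epsilon_2$ is a non-square, so the semisimple part consists of $\bar x$ with one of $2\pm\tr(x)$ a non-square; evaluating $(2+\tr x)(2-\tr x)=-(a-a^{-1})^2$ in the split case, and its non-split analogue, shows that this comprises all split semisimples together with the non-split semisimples of $q$-bad order. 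The complement of $\bar{\C}_u\bar{\C}_u^{-1}$ (which is the same for both choices of $\bar u$) is thus exactly the exceptional set in the proposition.

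For the sufficient direction, I compute $\tr([s, u_\epsilon])=2+(\epsilon c)^2$ where $s=\begin{pmatrix} a & b\\ c & d\end{pmatrix}$ and $u_\epsilon=\begin{pmatrix} 1 & \epsilon\\ 0 & 1\end{pmatrix}$. Given a non-exceptional semisimple $\bar g$, I pick the lift $x\in\SL_2(q)$ with $\tr(x)-2=t^2\neq 0$ and, by \Cref{conjugacy_SL}(1), replace $x$ by a conjugate with $(2,1)$-entry $-t^2$. Matching entries of $[s, u_1]=x$ then determines $a$ and $d$ in terms of a free parameter $b$, and since $\tr(s)$ depends affinely on $b$ with nonzero slope, I may choose $b$ so that $\tr(s)\neq\pm 2$, using $q>3$. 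For unipotent $\bar g$, restricting to upper-triangular $s$ with eigenvalues $a, a^{-1}$ (with $a\neq\pm 1$) yields the explicit identity $[s, u_\epsilon]=u_{(a^2-1)\epsilon}$; choosing $\epsilon$ to compensate for the square class of $a^2-1$ hits both $\psl_2(q)$-unipotent classes (e.g., using $a=2$, so $a^2-1=3$), and \Cref{prep_lemma_2} extends the conclusion to every unipotent element. The identity element $g=1$ is a trivial commutator.

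The principal obstacle is parameter bookkeeping in the sufficient direction: one must show that the semisimplicity constraint $\tr(s)\neq\pm 2$ can always be satisfied given the available freedom in $b$, and for the unipotent case verify that both target classes in $\F_q^{\times}/\F_q^{\times 2}$ are reachable by the two-parameter family $(a,\epsilon)$, which for small fields like $\F_5$ relies essentially on being able to vary $\epsilon$, since $a^2-1$ lies in a single square class there.
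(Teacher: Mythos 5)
Your proposal is correct and takes essentially the same route as the paper: the forward direction reduces $[s,u]=(sus^{-1})u^{-1}$ to the product of a unipotent class with its inverse class and reads off the exceptional semisimple elements exactly as the paper does, while your explicit solve of $[s,u_1]=x$ (with $a,c$ pinned down by the entries of $x$ and the free parameter $b$ used to force $\tr(s)\neq\pm 2$) is just the coordinate form of the adjustment of $x$ within the coset $x\Z_{G}(u)$ that the paper invokes, via \Cref{semisimple_unipotent_commutator_SL_quad_closed}, for the converse. The only soft spot, shared with the paper, is the identity element, which you both dismiss as a trivial commutator even though a non-central semisimple element and a non-identity unipotent element never commute in $\SL_2(q)$.
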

	
	\begin{proof}
		Let $q\equiv 1\;(\text{mod }4)$. Suppose $g\in G$ and $g=[x,y]=xyx^{-1}y^{-1}$, where $x\in G$ is semisimple and $y\in G$ is unipotent. Then $g\in U^2$, where $U$ is any one of the two unipotent conjugacy classes in $G$. By \cite[Theorem 3]{ga}, this implies that $g$ is identity, unipotent, or a semisimple element of $q$-good order. If $q\equiv 3\;(\text{mod }4)$, then $g\in \C_1\C_2$, where $\C_1$ and $\C_2$ are the two distinct unipotent classes in $\psl_2(q)$. Note that $\C_2=\C_1^{-1}$. By \Cref{product_distinct_unipotent_finite_fields}, we conclude that $g$ is identity, unipotent, split semisimple, or a non-split semisimple element of $q$-bad order.
		
		The converse can be proved by similar considerations as in the proof of \Cref{semisimple_unipotent_commutator_SL_quad_closed}.
	\end{proof}

	\subsection*{Acknowledgment} The first named author acknowledges the support through Prime Minister's Research Fellowship from the Ministry of Education, Government of India (PMRF ID: 0601097). The second and third named authors would like to acknowledge the support of IISER Mohali institute post-doctoral fellowship during this work. We thank Prof. Maneesh Thakur for the argument given in \Cref{concluding_remark}. We also thank Prof. Amit Kulshrestha for his interest in this work.
	\bibliographystyle{abbrv}
	\bibliography{references}
\end{document}